\documentclass[a4paper,11pt]{amsart}

\usepackage{geometry}
\usepackage{amssymb,amsmath,amsthm,amscd}
\usepackage[ansinew]{inputenc} 
\usepackage[english]{babel}

\usepackage{newtxtext}
\usepackage{newtxmath}
\usepackage[T1]{fontenc}
\usepackage{mathrsfs}
\usepackage{amscd}
\usepackage{currfile}
\usepackage{bm}
\usepackage{graphicx}
\usepackage{multirow}
\usepackage[colorlinks]{hyperref}
\usepackage{color}
\usepackage{tikz}
\usepackage{floatflt}

\newtheorem{theorem}{Theorem}
\newtheorem*{theorem*}{Theorem}
\newtheorem{definition}[theorem]{Definition}

\newtheorem{proposition}[theorem]{Proposition}
\newtheorem{corollary}[theorem]{Corollary}
\newtheorem{example}[theorem]{Example}
\newtheorem{examples}[theorem]{Examples}

\theoremstyle{definition}
\newtheorem{remark}[theorem]{Remark}

\newcommand{\oo}{{\mathbb{O}}}
\newcommand{\hh}{{\mathbb{H}}}

\newcommand{\cc}{{\mathbb{C}}}
\newcommand{\rr}{{\mathbb{R}}}
\newcommand{\zz}{{\mathbb{Z}}}
\newcommand{\nn}{{\mathbb{N}}}

\newcommand{\Eu}{{\mathbb{E}}}

\newcommand{\Aa}{\mathbb{A}}
\newcommand{\s}{{\mathbb{S}}}

\newcommand{\dB}{\overline\partial_{\B}}
\newcommand{\dM}{\overline\partial_{M}}
\newcommand{\GB}{\Gamma_\B}

\newcommand{\B}{\mathcal{B}}

\newcommand{\sr}{\mathcal{SR}}

\newcommand{\SL}{\mathcal{S}}

\newcommand{\I}{\mathcal{I}}
\newcommand{\F}{\mathcal{F}}
\newcommand{\M}{\mathcal{M}}
\newcommand{\PP}{\mathcal{P}}

\newcommand{\dibar}{\overline\partial}

\newcommand{\difbar}{\overline{\vartheta}_\B}
\newcommand{\difM}{\vartheta_M}
\newcommand{\difbarM}{\overline{\vartheta}_M}
\newcommand\IM{\operatorname{Im}}
\newcommand\RE{\operatorname{Re}}

\newcommand\vs[1]{{#1}_s^\circ}

\newcommand\Span{\operatorname{span}}

\newcommand{\OO}{\Omega}

\newcommand{\mbb}{\mathbb}

\newcommand{\R}{\mbb{R}}
\newcommand{\mc}{\mathcal}

\newcommand{\dd[3]}{\frac{\partial #2}{\partial #3}}
 
 \newcommand{\C}{\mbb{C}}
 
 \newcommand{\Q}{\mc{Q}}

 \newcommand{\bc}{\begin{center}}
 \newcommand{\ec}{\end{center}}

\newcommand{\Rn}{\mathbb R_n}

\newcommand{\AM}{\mathcal{AM}}

\newcommand{\DD}{\underline{D}}
\newcommand{\x}{\underline x}
\newcommand{\sC}{\underline S_\B}
\newcommand{\SA}{\underline S_A}
\newcommand{\wsC}{\widetilde{\underline S}_\B}
\newcommand{\wSA}{\widetilde{\underline S}_A}
\newcommand{\tGamma}{\widetilde\Gamma_\B}

\newcommand\Tau{T}

\begin{document}

\title[Fueter trees for Dunkl-regular functions over alternative *-algebras]
{Fueter trees for Dunkl-regular functions\\ over alternative *-algebras
}

\author{Alessandro Perotti
}
\address{Department of Mathematics, University of Trento, Via Sommarive 14, Trento Italy}
\address{ORCID: 0000-0002-4312-9504}
\email{alessandro.perotti@unitn.it}
\thanks{The author is a member the INdAM Research group GNSAGA and was partially supported by the grant ``INdAM - GNSAGA Project, Teoria delle funzioni ipercomplesse e applicazioni
and ``INdAM - GNSAGA Project" CUP E53C25002010001''}

\begin{abstract}
We prove a general Fueter Theorem over real alternative *-algebras. We show that a suitable power of the Laplacian maps Dunkl-regular functions to Dunkl monogenic functions with axial symmetries. Using the embedding of hypercomplex function theories in the class of Dunkl monogenic functions, we subsume several Fueter-type results known in the literature and obtain the most general form for the action of the Laplacian on function spaces over hypercomplex subspaces. We show that Fueter Theorems are in a one-to-one correspondence with a class of graphs, the Fueter trees, that describe the interactions between Dunkl-regular function spaces and the relation with the iterated Laplacian. We obtain that the number of distinct Fueter trees on a hypercomplex space of dimension $n+1$ is equal to the number of partitions in odd parts of the integer $n$. 
\end{abstract}

\keywords{Dunkl operators, Dirac operator, Functions of a hypercomplex variable, Monogenic functions, Slice-regular functions, Fueter Theorem}
\subjclass[2020]{Primary 30G35; Secondary 33C52  
}

\maketitle

\section{Introduction}

The purpose of this work is to prove a general Fueter Theorem over real alternative *-algebras. In the original statement of the Theorem \cite{Fueter1934}, Fueter showed that the Laplacian in four real variables 
provided a method to produce a large class of `regular' quaternionic functions, those defined by an analogue of the Cauchy-Riemann equations and having an axial symmetry with respect to the real axis. In the modern language of slice-regular function theory, the Laplacian maps slice-regular functions into axially monogenic functions (see e.g.\ \cite[Theorem 3.6.3]{Harmonicity}).  Fueter Theorem was later generalized to different settings: to monogenic functions over Clifford algebras by Sce \cite{Sce} and Qian \cite{Qian1997}, to octonionic regular functions by Dentoni and Sce \cite{DentoniSce}. More recently, the theorem has been extended to slice-regular functions on a hypercomplex subspace of an alternative *-algebra \cite[Theorem 27]{CRoperators} and to generalized partial-slice monogenic functions \cite{XuSabadiniFueterSce,Huo_Sabadini_Xu_arXiv26}.  In these higher dimensional extensions of Fueter Theorem the role of the Laplacian is taken by a suitable power of the Laplacian of the hypercomplex subspace.
A different Fueter-type Theorem, with the Euclidean Laplacian replaced by other differential operators of the second order, has been proved in \cite{GhiloniStoppatoFueterSce} for the class of $T$-regular functions on associative algebras.


The recent work \cite{BinosiPerotti2025} proved that all the function theories recalled above can be embedded in the family of Dunkl monogenic functions over hypercomplex subspaces. Moreover, it showed that the prescribed axial symmetries of the functions can be encoded in the Dunkl multiplicities. 
This property naturally led to the introduction of a class of function spaces, whose elements are called \emph{Dunkl-regular functions}, that refine Dunkl monogenic function theory and Dunkl harmonic analysis. 
In this work we apply results from Dunkl function theory to show (Theorem \ref{teo:DeltaM}) how the Laplacian acts between these spaces of Dunkl-regular functions. As a corollary (Corollary \ref{cor:GFT}), we obtain the general Fueter Theorem, which subsumes all the above mentioned versions of the theorem.


Dunkl operators are differential-difference operators associated with finite reflection groups. These operators, introduced in \cite{Dunkl},  appear, e.g., in harmonic analysis and in the study of multivariate special functions. We refer to \cite{DunklXu} and \cite{Rosler} for wide accounts of Dunkl theory and its applications.
Dunkl operators were introduced in Clifford analysis in 2006 \cite{Ren_et_al}. Let $\rr_n$ be the real Clifford algebra with signature $(0,n)$ with generators $e_1,\ldots,e_n$. Consider the euclidean space $\rr^n=\langle e_1,\ldots,e_n\rangle$ embedded in the paravector space $M=\langle 1,e_1,\ldots,e_n\rangle$ of $\rr_n$. Given a finite reflection group for $\R^n$ and a root system, let $T_1,\ldots,T_n$ be the corresponding Dunkl operators. The Dunkl-Dirac operator $\underline D$ on $\R_n$ is defined as $\underline D=\sum_{i=1}^ne_iT_i$. 

Besides Clifford analysis \cite{BDS,GHS}, 
another higher dimensional generalization of complex analysis, slice analysis, was developed over the last twenty years. 
The theory of slice-regular functions, that includes also standard polynomials, was introduced in the quaternionic setting \cite{GeSt2006CR,GeSt2007Adv} and then extended to Clifford algebras, octonions and to any real alternative *-algebra \cite{CoSaSt2009Israel,GeStRocky,AIM2011}. See \cite{GeStoSt2013,Struppa2015Algebras} for extended references of this function theory. 

Recently, the papers \cite{binosi2025dunklapproachsliceregular,BinosiPerotti2025} provided characterizations of sliceness and slice-regularity over any hypercomplex subspace of a real alternative *-algebra $\Aa$ based on the spherical Dunkl-Dirac operator and on the  Dunkl-Cauchy-Riemann operator $D=\dd{}{x_0}+\underline D$ under an appropriate choice of the Dunkl multiplicities. This approach allows to view both Clifford analysis and slice-regular function theory, as well as other hypercomplex function theories existing in the literature, as subcases of the general theory of Dunkl monogenic functions, namely the functions annihilated by  $D$.
Let $M$ be a hypercomplex subspace of dimension $n+1$ of $\Aa$ and let $\OO$ be an open subset of $M$. To every partition $\PP$ of the set $\{1,\ldots,n\}$, a function space $\F_\PP(\OO)$ of Dunkl monogenic functions having an axial symmetry depending on $\PP$ (called \emph{$\PP$-sliceness}) can be associated. Its elements are called \emph{$\PP$-Dunkl-regular functions} (Definition \ref{def:P-Dunkl-regular}), and the sum $\kappa$ of the Dunkl multiplicities is called the \emph{Dunkl weight} of the functions in $\F_\PP(\OO)$. 
Many of these function spaces are isomorphic: the number of non-equivalent spaces is equal to the number of partitions of the number $n$  (Theorem \ref{teo:classification2}). 
Using results from Dunkl theory, in particular a formula for the Dunkl Laplacian associated with the reflection group $\zz_2^n$, we study the action of the Euclidean Laplacian $\Delta_M$ of $M$ on Dunkl-regular functions. We prove (Theorem \ref{teo:DeltaM}) that the Laplacian of a Dunkl-regular function of weight $\kappa$ can be written as the sum of Dunkl-regular functions of weight $\kappa+1$. 
When $\PP$ is an odd partition, i.e., all its elements have odd cardinality, the theorem can be applied until the image of the iterated Laplacian belongs to the space of monogenic functions. 
We then obtain the general Fueter Theorem (Corollary \ref{cor:GFT}).

\begin{theorem*}[General Fueter Theorem]
Assume that $\PP$ is an odd partition with $\ell<n$ elements.  Let $f\in\F_\PP(\OO)$ be a Dunkl-regular function of Dunkl weight $\kappa<0$. Then $\kappa=(\ell-n)/2$ is a negative integer and the iterated Laplacian $\Delta_M^{|\kappa|}f=(\Delta_M)^{\frac{n-\ell}2}f$ is a $\PP$-slice monogenic function on $\OO$: 
\[
\dM{(\Delta_M)}^{\frac{n-\ell}2}f=0, 
\]
where $\dM$ is the Cauchy-Riemann operator of $M$.
\end{theorem*}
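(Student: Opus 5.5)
The plan is to derive the statement by iterating the Laplacian theorem, Theorem \ref{teo:DeltaM}, and tracking the Dunkl weight at each step.

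\textbf{Step 1: the weight.} For the reflection group $\zz_2^n$ attached to a partition $\PP=\{P_1,\ldots,P_\ell\}$ of $\{1,\ldots,n\}$, the $\PP$-sliceness condition fixes the multiplicities. On the indices of each block $P_j$ the multiplicity is chosen so that the Dunkl-Dirac operator restricted to that block reproduces the spherical (slice) behaviour. This yields a block contribution of $-(|P_j|-1)/2$, so that
\[
\kappa=\sum_{j=1}^\ell \frac{1-|P_j|}{2}=\frac{\ell-n}{2}.
\]
I will read this off from Definition \ref{def:P-Dunkl-regular}. Since $\PP$ is odd, every $|P_j|-1$ is even, so $\kappa$ is an integer. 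It is negative precisely because $\ell<n$. This settles the first assertion.

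\textbf{Step 2: induction.} Theorem \ref{teo:DeltaM} says that the Laplacian of a Dunkl-regular function of weight $\kappa$ is a finite sum of Dunkl-regular functions of weight $\kappa+1$. More precisely, these lie in spaces $\F_{\PP'}$ whose multiplicities are those of $\PP$ raised by $1/2$ on a pair of indices, or on a single index in a block. I expect the odd-block structure to be preserved: each term should again be associated with a partition refined in an odd way, with one block of size $2m+1$ replaced by a block of size $2m-1$ and two singletons. I will induct on $|\kappa|$. After $k$ steps, $\Delta_M^k f$ is a sum of Dunkl-regular functions of weight $\kappa+k$ with nonnegative-or-zero multiplicities, each still defined on $\OO$. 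Hence after $k=|\kappa|=(n-\ell)/2$ steps every summand has weight $0$.

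\textbf{Step 3: conclusion.} Because the multiplicities are nonnegative or zero and sum to $0$, all multiplicities vanish. The Dunkl operators $T_i$ then reduce to the partial derivatives $\partial_{x_i}$, and $D$ becomes $\dM$. Each summand is therefore annihilated by $\dM$, and so is their sum. The $\PP$-slice axial symmetry is retained because $\Delta_M$ commutes with the reflections of $\zz_2^n$ and with the orthogonal transformations preserving the blocks of $\PP$. The function $\Delta_M^{(n-\ell)/2}f$ is thus $\PP$-slice monogenic.

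\textbf{Main obstacle.} The delicate point is Step 2. One has to verify that the output spaces of Theorem \ref{teo:DeltaM} have multiplicities that stay in the admissible range at every stage, so that the theorem can be reapplied, and that exactly $|\kappa|$ iterations reach weight zero without overshooting. I would first check this on the multiplicity vectors, using the explicit Dunkl Laplacian $\Delta_\kappa=\Delta+\sum_i \frac{2k_i}{x_i}\partial_{x_i}-\sum_i\frac{k_i}{x_i^2}(1-\sigma_i)$ for $\zz_2^n$. The oddness assumption is what guarantees that every $k_i$ is a half-integer that reaches $0$ exactly.
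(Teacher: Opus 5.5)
Your route is the paper's: Corollary \ref{cor:GFT} is obtained by iterating Theorem \ref{teo:DeltaM}, and your weight computation and the monogenicity conclusion are correct in substance. The partition $\PP_j$ in that theorem replaces a block of odd size $|A_j|\ge3$ by two singletons and a block of odd size $|A_j|-2$. This is stated in the theorem, not something you need to ``expect''. Consequently every intermediate partition is odd and has no block of size two, $\OO$ remains circular for each refinement, and the weight rises by exactly one per step. After $(n-\ell)/2$ steps every summand lies in the space of the all-singleton partition, which is $\M(\OO)$ by Example \ref{ex:FP}(i). Two bookkeeping corrections. First, admissible multiplicities are non-positive, not nonnegative. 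Your ``same sign, sum zero'' argument survives, but the simpler reason is that weight $0$ forces $\ell'=n$. Second, oddness matters only because it excludes blocks of size two, the hypothesis of Theorem \ref{teo:DeltaM}. It has nothing to do with the individual $k_i$ being half-integers, and no separate check with the Dunkl Laplacian is needed.

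The step that fails as written is the $\PP$-sliceness of $\Delta_M^{(n-\ell)/2}f$. The leaves lie in $\M(\OO)$, which carries no symmetry, so this property must be tracked separately. Commutation of $\Delta_M$ with the reflections and with block-preserving rotations of the variables does not give it, because $\ker\mathscr S_\PP$ is not a space of invariants: $f(x)=x$ is slice but $f\circ R\ne f$. For a concrete failure, pointwise projection of the values onto their $v_1$-component commutes with every such change of variables, yet it sends the slice function $x$ to $x_1v_1$, which is not slice. What you are missing is the structure $f=F^A_\emptyset+\x_A H$ of $A$-slice functions from Proposition \ref{pro:SA}, with $F^A_\emptyset$ and $H=f'_{s,A}$ even in $\beta$ and depending only on $(z,y)$. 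You need this together with two facts about $\Delta_M$: it acts componentwise, hence commutes with left multiplication by the $v_i$, and $\Delta_M(x_iH)=x_i(\Delta_MH+2\beta^{-1}\partial_\beta H)$. Alternatively, as in the paper, use Corollary \ref{cor:Pslice} (proved via Proposition \ref{pro:delta1_2slice}) and Corollary \ref{cor:delta1_2Pslice}. These show that every summand $\tfrac{1-|A|}{2}\delta_2^{\alpha}(\cdot)$ produced along the tree lies in $\ker\mathscr S_\PP$, not only in the relevant $\F_{\PP'}(\OO)$.
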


The general Fueter Theorem is a method to produce monogenic functions on $M$ with prescribed symmetries (the $\PP$-sliceness) starting from $\PP$-Dunkl-regular functions. The iterated application of Theorem \ref{teo:DeltaM} can be represented by a graph, the \emph{Fueter tree} of the space $\F_\PP(\OO)$. 
It is a tree of height $|\kappa|=(n-\ell)/2$, root $\F_\PP(\OO)$, nodes some other spaces $\F_{\PP'}(\OO)$ with $\PP'$ a refinement of $\PP$, and leaves all equal to the space of monogenic functions $\M(\OO)$. 

Every Fueter tree corresponds to a version of the Fueter Theorem on $M$. In particular we subsume all the already known Fueter Theorems. 
When $n-\ell>2$, Theorem \ref{teo:DeltaM} adds new information also to the classical versions of Fueter Theorem. It says which are the function spaces involved at every iteration of the Laplacian and not only at the final step $(\Delta_M)^{(n-\ell)/2}$. 
The tree with maximal height on $M$ is obtained when $\ell=1$ and $|\kappa|=(n-1)/2$. It is a unary tree with root node $\sr(\OO)$, the space of slice-regular functions. 

The number of distinct Fueter trees (up to equivalence) is equal to the number of non-equivalent roots $\F_\PP(\OO)$, with $\PP$ an odd partition. This is the number of \emph{partitions in odd parts} of $n$, usually denoted by $q(n)$. Excluding the trivial tree with root and leaf $\M(\OO)$, there are, up to equivalence, $q(n)-1$ distinct Fueter trees. Therefore there are $q(n)-1$ distinct Fueter Theorems on $M$ (see Table \ref{table}).
For example, there is only one Fueter Theorem on the quaternions, the original version of Fueter Theorem, while there are four distinct Fueter Theorems on the octonions (see \S\ref{sec:Examples}).

We describe in more detail the structure of the paper. 
Section \ref{sec:pre} is devoted to preliminaries. We recall basic definitions
about hypercomplex subspaces of real alternative *-algebras, Cauchy-Riemann operators, monogenic functions, slice and slice-regular functions. 
In Subsection \ref{sub:Dunkl-Dirac_operators} we recall the definition of Dunkl operators associated to the reflection group $\zz_2^n$ and of the Dunkl-Dirac operator $\DD_\B$ associated to a hypercomplex basis $\B$ of $M$.  Following \cite{DeBieGenestVinet}, we give the definition of the Casimir operator $\sC$ for the superalgebra $\mathfrak{osp}(1|2)$ realization provided by the operators $\x=\IM(x)$  and $\DD_\B$. Subsection \ref{sec:Dunkl-Dirac_operators} recall the criteria for sliceness and slice regularity based on Dunkl theory. 

Section \ref{sec:dunkl_regular_function_spaces} introduces Dunkl-regular function spaces by means of intermediate Dunkl-Dirac operators.
Subsection \ref{sub:Dunkl-regular_function_spaces} introduces \emph{$\PP$-Dunkl-regular functions}. For any open subset $\OO$ of $M$ and any partition $\PP$ of the set $[n]=\{1,\ldots,n\}$, where $n=\dim M-1$, the space $\F_\PP(\OO)$ is the kernel of a Dunkl-Cauchy-Riemann operator $D_\PP$.  We recall results about the dependence of the spaces $\F_\PP(\OO)$ on $\PP$ and on the Dunkl multiplicities (Theorem \ref{teo:classification1}), and the equivalence of Dunkl-regular function spaces up to reordering of the hypercomplex basis and the partition (Theorem \ref{teo:classification2}). 
Subsection \ref{sec:Pslice} collects some results about \emph{$\PP$-slice functions}, the functions having an axial symmetry depending on $\PP$. In particular, we show that the operators $\dM$ and $\Delta_M$ preserve $\PP$-sliceness. 
Subsection \ref{sec:CK} shows how to construct Dunkl-regular polynomials using the method of Cauchy-Kovalevskaya extension (Theorem \ref{teo:CK}). In particular, we obtain bases for the subspace of homogeneous Dunkl-regular polynomials of a given degree (Proposition \ref{pro:basis} and Corollary \ref{cor:basis_associative}).  

Section \ref{sec:GFT} contains the main results. Theorem \ref{teo:DeltaM} gives the decomposition of the Laplacian of a Dunkl-regular function as a sum of Dunkl-regular functions. Corollary \ref{cor:GFT} is the general Fueter Theorem and Corollary \ref{cor:polyharmonic} shows that every Dunkl-regular function is polyharmonic. These results are illustrated through some examples. The section also introduces the concept of \emph{Fueter tree}, a graph equivalent to the Fueter Theorem. Some of these trees are presented in Subsection \ref{sec:Examples} in the context of quaternionic, octonionic or Clifford algebras.

\section{Preliminaries}\label{sec:pre}

Let $\Aa$ be a finite-dimensional real alternative *-algebra with unity $1$. We assume that $1\ne0$ and embed $\rr\simeq\rr\cdot1$ in $\Aa$. 
$\Aa$ is equipped with a $\rr$-linear anti-involution $x\mapsto x^c$ such that 
$x^c=x$ for $x$ real. Let $t(x):=x+x^c\in \Aa$ be the trace of $x$ and $n(x):=xx^c\in \Aa$  the norm of $x$. 
We denote by
\[
\s_{\Aa}:=\{J\in \Aa : t(x)=0,\ n(x)=1\}
\]
the set of imaginary units of $\Aa$ compatible with the *-algebra structure and by 
\[
\Q_{\Aa}:=\cup_{J\in \s_{\Aa}}\C_J,
\]
the quadratic cone of $\Aa$ (see \cite[Definition 3]{AIM2011}), 
where $\C_J=\Span(1,J)$ is the `slice' of $\Aa$ generated by $1$ and $J$. 
Each element $x$ of $\Q_{\Aa}$ can be written as $x=\RE(x)+\IM(x)$, with $\RE(x)=\frac{x+x^c}2$, $\IM(x)=\frac{x-x^c}2=\beta J$, where $\beta=\sqrt{n(\IM(x))}\geq0$ and $J\in\s_{\Aa}$, with unique choice of $\beta\geq0$ and $J\in\s_{\Aa}$ if $x\not\in\R$.
In the following we will also write $\x$ in place of $\IM(x)$. 
We refer to 
\cite[\S2]{AIM2011} and \cite[\S1]{AlgebraSliceFunctions} for more details and examples about real alternative *-algebras and their quadratic cones.

\subsection{Hypercomplex subspaces and Cauchy-Riemann operators}\label{sec:Hypercomplex_subspaces}

We recall some concepts introduced in \cite[\S3]{CRoperators}.
A \emph{hypercomplex subspace of $\Aa$} is a real vector subspace $M$ of $\Aa$ of dimension $\dim(M)\ge2$ such that $\R \subseteq M \subseteq \Q_{\Aa}$.
For example, $\hh$ and $\oo$ are hypercomplex subspaces of $\Aa=\hh$ and $\Aa=\oo$ respectively. The space $\hh_r=\{x=x_0+ix_1+jx_2\in\hh\;|\; x_0,x_1,x_2\in\R\}$ of reduced quaternions is another hypercomplex subspace of $\hh$. In general, the space $\rr^{n+1}$ of paravectors of the real Clifford algebra $\R_n$ of signature $(0,n)$ is a hypercomplex subspace of $\Rn$. 



Lemma \cite[Lemma 1.4]{VolumeCauchy} showed that every hypercomplex subspace $M$ has a \emph{hypercomplex basis} $\B=(v_0,v_1,\ldots,v_{n})$, i.e., a real basis of $M$ with $v_0=1$, $v_1,\ldots,v_n\in\s_M:=\s_{\Aa}\cap M$ and such that for every $i\ne j$ in the set $\{1,\ldots,n\}$, the elements $v_i$ and $v_j$ anticommute (see \cite[\S3]{CRoperators}). This basis is orthonormal w.r.t.\ a scalar product of $\Aa$ such that $\|x\|^2=n(x)$ for every $x \in M$, and its elements satisfy the identity
\begin{equation}\label{eq:anticommute}
v_i(v_j a)=-v_j(v_ia)\text{\quad for every $a\in \Aa$ and $i, j\in\{1,\ldots,n\},\ i\ne j$.}  
\end{equation}

Complete $\B$ to a orthonormal basis $\B_{\Aa}=(v_0,v_1,\ldots,v_{d-1})$ of $\Aa$ and let $L:\R^d \to \Aa$ be the real vector isomorphism defined by $\B_{\Aa}$, mapping $x=(x_0,x_1,\ldots,x_{d-1})$ to $L(x)=\sum_{\ell=0}^{d-1}x_{\ell}v_{\ell}$. 

If $\OO$ is an open subset of $M$, for $i=0,1,\ldots,n$ define the differential operators $\partial_{x_i}:C^1(\OO,\Aa)\to C^0(\OO,\Aa)$ as
\[
\textstyle\partial_{x_i} f=L\circ\dd {(L^{-1}\circ f\circ L_{|\OO'})}{x_i}\circ L^{-1},
\]
where $\OO'=L^{-1}(\OO)$. 

\begin{definition}\label{def:dM}
The \emph{Cauchy-Riemann operator of $M$} is 
\[
\dM:=\dB:=\partial_{x_0}+v_1\partial_{x_1}+\cdots +v_n\partial_{x_n}
\]
where $\B$ is any hypercomplex basis of $M$. The operator is well-defined since $\dB$ does not depend on the choice of a hypercomplex basis $\B$ of $M$ (see \cite[Remark 2]{BinosiPerotti2025}). 
\end{definition}

For example, the Cauchy-Riemann-Fueter operator on $\hh$, the Cauchy-Riemann (or Fueter-Moisil \cite{DentoniSce}) operator on $\oo$ and the Cauchy-Riemann operator of Clifford analysis over $\R_n$, are the operators $\dM$, for $M=\hh,\oo,\R^{n+1}$ respectively.

\begin{definition}\label{def:monogenic}
Let $\OO\subseteq M$ be an open set. The functions $f\in C^1(\OO,\Aa)$ in the kernel of $\dM$ are called (left) \emph{monogenic functions} on $\OO$. We write $f\in\M(\OO)$.
\end{definition}

Properties of monogenic functions on a hypercomplex subspace have been studied in \cite[\S3]{GhiloniStoppato_arXiv24} and \cite{Huo_Ren_Xu_arXiv25}.

\subsection{Slice functions and slice-regular functions}\label{sec:slice}

The functions on $\Aa$ that are compatible with the slice decomposition of the quadratic cone are called \emph{slice functions} \cite{AIM2011}. 
Given $D\subseteq\cc$ invariant w.r.t.\ complex conjugation, a function $F: D \to \Aa\otimes_{\R}\C$ is a \emph{stem function} if it satisfies  $F(\overline z)=\overline{F(z)}$ for every $z\in D$, where conjugation in $\Aa\otimes_{\R}\C$ is complex conjugation of the second factor. 
Let $\OO_D$ be the \emph{axially symmetric} (or \emph{circular}) subset of $\Q_\Aa$ defined by 
\[
\OO_D=\bigcup_{J\in\s_{\Aa}}\{\alpha+J\beta\in\Aa : \alpha,\beta\in\R, \alpha+i\beta\in D\}.
\]
The stem function $F:D \to \Aa\otimes_{\R}\C$ with components $F_\emptyset,F_1:D\to\Aa$, induces a \emph{(left) slice function} $f=\I(F):\OO_D \to \Aa$: if $x=\alpha+J\beta\in \OO_D\cap \C_J$, then  
\[ f(x)=F_\emptyset(z)+JF_1(z),\text{\quad where $z=\alpha+i\beta\in D$}. 
\]
Suppose that $D$ is open.  The slice function $f=\I(F):\OO_D \to \Aa$ is \emph{(left) slice-regular} if $F$ is holomorphic on $D$ w.r.t.\ the complex structure on $\Aa\otimes_{\R}\C$ induced by the second factor. 
We will denote by $\SL^1(\OO_D)$ the real vector space of slice functions induced by stem functions of class $\mathcal C^1$ and by $\sr(\OO_D)$ the vector subspace of slice-regular functions. 
Polynomials in $x,x^c$ of the form 
$\sum_{\alpha+\beta=k}x^\alpha (x^c)^\beta a_{\alpha,\beta}$, with coefficients $a_{\alpha,\beta}\in\Aa$, are slice functions, while polynomial functions of the form $f(x)=\sum_{j=0}^d x^ja_j$ and convergent power series with right coefficients in $\Aa$ are slice-regular.
If $\Aa=\hh$ and $\OO_D\cap\rr\ne\emptyset$, this definition of slice-regularity is equivalent to the original one proposed by Gentili and Struppa in \cite{GeSt2007Adv}. 

If $M$ is a hypercomplex subspace of $\Aa$, we can consider the restriction of a slice function defined on $\OO_D$ to the subset $\OO=\OO_D\cap M$, called axially symmetric subset of $M$. 
In view of the representation formula (see e.g.~\cite[Proposition 6]{AIM2011}), the restriction of a slice function $f=\I(F)\in\SL(\OO_D)$ to $\OO$ uniquely determines $f$ and the inducing stem function $F$. We will use the same symbol $f$ to denote the restriction $f_{|\OO}$, and the symbols $\SL(\OO)$ and $\sr(\Omega)$ to denote the sets of (restricted) slice functions and of (restricted) slice-regular functions on $\OO$, where $\OO$ is an axially symmetric open subset of $M$.

We can characterize sliceness of functions through spherical values.
To any function $f:\OO\to \Aa$, not necessarily slice, we associate its \emph{spherical value} $\vs f:\OO \to \Aa$, 
defined as
\begin{equation}\label{eq:sphericalfunctions}
\vs f(x):=\tfrac{1}{2}(f(x)+f(x^c)).
\end{equation}
The following equality holds
\begin{equation}\label{eq:spherical}
f=
\vs f+\x ^{-1}\vs{(\x f)},\text{\quad where $\x=\IM(x)$.}
\end{equation}


\begin{proposition}\cite[Proposition 9]{BinosiPerotti2025}
If $f$ is continuous on $\OO\subseteq M$, then $f$ is a slice function on $\OO$ if and only if it is a slice function on $\OO\setminus\rr$. 
Therefore $f\in\SL(\OO)\cap C(\OO,\Aa)$ if and only if the functions $\vs f$ and $\vs{(\x f)}$ are constant on every sphere $\s_x\cap M=\alpha+\beta\,\s_M$, with $x=\alpha+I\beta\in\OO\setminus\rr$. 
\end{proposition}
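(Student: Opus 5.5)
The plan is to prove the two implications separately and to work sphere by sphere, using the decomposition \eqref{eq:spherical}. Both directions reduce to one question: how a slice function is determined by its values off the real axis.

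\emph{The implication ``$\Rightarrow$''.} This direction is immediate. The restriction of a slice function $f=\I(F)$ to $\OO\setminus\rr$ is induced by the restriction of $F$ to $D\setminus\rr$.

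\emph{The implication ``$\Leftarrow$''.} Assume $f$ is continuous on $\OO$ and slice on $\OO\setminus\rr$, say $f=\I(F)$ there, with $D\setminus\rr$ the corresponding conjugation-invariant set. I would define the stem components explicitly. For $z=\alpha+i\beta\in D$ with $\beta>0$, pick any $J\in\s_M$ and set
\[
F_\emptyset(z)=\vs f(\alpha+J\beta),\qquad F_1(z)=\beta^{-1}\vs{(\x f)}(\alpha+J\beta),
\]
and extend to $\beta<0$ by the stem condition. These are well defined because the spherical values of a slice function are constant on spheres. On the real axis I put $F_\emptyset(\alpha)=f(\alpha)$ and $F_1(\alpha)=0$.

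The slice representation $f(x)=F_\emptyset(z)+JF_1(z)$ then holds at real points trivially. At nonreal points it holds by \eqref{eq:spherical}, using $\x^{-1}=-J/\beta$ together with $\vs{(\x f)}=-\beta^2\vs{(\x^{-1}\cdots)}$. More directly, since $f$ is slice off $\rr$, the stem function there is exactly $F$.

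The only subtle point is that $\OO$ must be axially symmetric and the stem function must be defined on all of $D$. It must also be continuous if one asks for $f\in\SL(\OO)\cap C(\OO,\Aa)$. The definition of slice function in this paper does not require $F_1$ to be continuous at real points, so the real points pose no obstruction. Continuity of $f$ at real points does give $\vs f\to f(\alpha)$ and $\vs{(\x f)}\to0$, which is consistent with the choice above.

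\emph{The ``therefore'' characterization.} If $f$ is slice, the representation formula shows directly that $\vs f(x)=F_\emptyset(z)$ and $\vs{(\x f)}(x)=-\beta F_1(z)$; both are independent of $J$, hence constant on spheres. Conversely, if both spherical values are constant on every sphere $\alpha+\beta\s_M$ in $\OO\setminus\rr$, define $F_\emptyset$ and $F_1$ as above. Then $F_\emptyset$ is even in $\beta$, because $\vs f$ is invariant under $x\mapsto x^c$. Likewise $\beta F_1$ is even, because $x\mapsto x^c$ corresponds to $J\mapsto -J$, so $F_1$ is odd. Hence $F$ is a stem function, and \eqref{eq:spherical} gives
\[
f=F_\emptyset-\tfrac{J}{\beta}\bigl(-\beta F_1\bigr)=F_\emptyset+JF_1 .
\]
So $f$ is slice on $\OO\setminus\rr$, and by the first part it is slice on $\OO$.

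\emph{Main obstacle.} The step I expect to be delicate is the algebraic one: checking that $\x^{-1}(\x f)$ manipulations are legitimate in a nonassociative alternative algebra. This requires Artin's theorem, since $\x$ and $\x^{-1}$ generate an associative subalgebra with any single element. It also requires care with real points, where $\x$ is not invertible.
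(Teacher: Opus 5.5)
Your construction is the right one, and it matches what the paper relies on (Remark \ref{rem:sliceness_continuity_A} quotes the proof of this proposition). You read the stem function off from $\vs f$ and $\vs{(\x f)}$ through \eqref{eq:spherical} and extend it to $D\cap\rr$ by $F_\emptyset(\alpha)=f(\alpha)$, $F_1(\alpha)=0$.

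If stem functions are allowed to be arbitrary, your argument for both claims goes through, but then the continuity of $f$ is never used. The point of that hypothesis, and what the paper later extracts from this proof, is that the extended stem function is continuous up to the real axis. This is exactly where your argument fails. You note that $\vs{(\x f)}\to0$ at real points, but that is automatic because of the factor $\x$, and it only says $\beta F_1\to0$. It gives no information on $F_1=-\beta^{-1}\vs{(\x f)}$ itself.

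What is needed is that $\|\x\|^{-1}\vs{(\x f)}$ extends continuously by $0$ to $\OO\cap\rr$. Since $\IM(x^c)=-\x$, one has $\vs{(\x f)}(x)=\tfrac12\,\x\bigl(f(x)-f(x^c)\bigr)$, hence $\|\x\|^{-1}\vs{(\x f)}(x)=\tfrac12 J\bigl(f(x)-f(x^c)\bigr)$ with $J=\x/\|\x\|\in\s_M$. Left multiplication by elements of the compact set $\s_M$ is uniformly bounded, and $f(x)-f(x^c)\to0$ as $x\to\alpha\in\rr$ by continuity of $f$. Together with $\vs f\to f(\alpha)$, this gives continuity of $F$ on all of $D$. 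This is the one place where continuity is actually used.

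Smaller points:
\begin{itemize}
\item Sign of $F_1$: with $f=F_\emptyset+JF_1$ and $\x=\beta J$ one gets $\vs{(\x f)}=-\beta F_1$. So your definition should read $F_1(z)=-\beta^{-1}\vs{(\x f)}(\alpha+J\beta)$; your later computation already uses this sign.
\item The garbled formula ``$\vs{(\x f)}=-\beta^2\vs{(\x^{-1}\cdots)}$'' should be replaced by the direct check $\x^{-1}\vs{(\x f)}=\tfrac12\bigl(f(x)-f(x^c)\bigr)$.
\item The sentence ``it must also be continuous if one asks for $f\in\SL(\OO)\cap C(\OO,\Aa)$'' conflates continuity of $f$, which is assumed, with continuity of $F$, which is what has to be proved.
\item No appeal to Artin's theorem is needed: $\x^{-1}=-\|\x\|^{-2}\x$ and the left alternative law $\x(\x a)=\x^2a$ suffice.
\end{itemize}
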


If $\dim M=2$, then any function $f:\OO\to\Aa$ is a slice function, since $\alpha+\beta \s_M=\{x,x^c\}$ and $\vs f$, $\vs{(\x f)}$ are invariant w.r.t.\ conjugation $x\mapsto x^c$. 
We refer the reader to \cite[\S3,4]{AIM2011} for more properties of slice functions and slice-regularity.

\subsubsection*{The global operators $\difM$, $\difbarM$ and the spherical Dirac operator}

Let $\OO$ be an open subset of the hypercomplex subspace $M$ of $\Aa$. 
We recall from \cite{Gh_Pe_GlobDiff} and \cite{CRoperators} the definition of the global differential operator $\difbar: C^1(\OO\setminus \R,A) \to C^0(\OO\setminus \R, A)$:
\begin{equation}\label{eq:theta}
\difbar=\partial_{x_0}-\x ^{-1}\Eu,
\end{equation}
where $\Eu=\sum_{i=1}^nx_i\partial_{x_i}$ is the Euler operator for $M\cap\ker(t)=\Span(v_1,\ldots,v_n)$. 
The operator $\Eu$ does not depend on the choice of $\B$, so we can define the operator $\difbarM:=\difbar$ for any choice of the hypercomplex basis $\B$ of $M$. 
If $f$ is a slice function on $\OO$, $f$ is slice-regular if and only if $\difbarM f=0$ on $\OO\setminus\R$ (see \cite[Theorem 2.2]{Gh_Pe_GlobDiff}). 

For any $i,j$ with $1\le i,j\le n$, let $L_{ij}=x_i\partial_{x_j}-x_j\partial_{x_i}$ and let
\[
\GB=-\sum_{1\le i<j\le n} v_i(v_jL_{ij})=-\tfrac12\sum_{i,j=1}^n v_i(v_jL_{ij})
\] 
be the \emph{spherical Dirac operator} associated to the basis $\B$. 
The operators $L_{ij}$ are tangential differential operators for the spheres $\s_x\cap M=\alpha+\beta\s_M$, with $x=\alpha+I\beta\in\Q_{\Aa}\setminus\rr$. 
We recall the main relation linking the operators $\dM$, $\difbarM$ and $\Gamma_\B$, that implies the invariance of $\Gamma_\B$ w.r.t.\ $\B$. 

\begin{theorem}\cite[Theorem 12]{BinosiPerotti2025}\label{teo:difference}
Let $f:\OO\to \Aa$ be a $\mathcal{C}^1$ function. 
Then it holds in $\OO\setminus\R$:
\[
\dM f-\difbarM f=-\x ^{-1} \Gamma_\B f.
\]
\end{theorem}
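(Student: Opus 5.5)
We need to prove $\dM f-\difbarM f=-\x^{-1}\Gamma_\B f$ on $\OO\setminus\R$. By definition,
\[
\dM f-\difbarM f=\sum_{j=1}^n v_j\partial_{x_j}f+\x^{-1}\Eu f .
\]
It is therefore enough to show the identity
\[
\x\Bigl(\sum_j v_j\partial_{x_j}f\Bigr)+\Eu f=-\Gamma_\B f ,
\]
where $\x=\sum_i x_iv_i$. We then multiply on the left by $\x^{-1}$. This step uses the fact that $\x^{-1}(\x a)=a$ for every $a\in\Aa$, which holds by alternativity (Artin's theorem applied to the subalgebra generated by $\x$ and $a$), since $\x\in\Q_\Aa$ is invertible off $\R$ with $\x^{-1}=-\x/|\x|^2$.

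To compute the left-hand side, expand
\[
\x\Bigl(\sum_j v_j\partial_{x_j}f\Bigr)=\sum_{i,j}x_i\, v_i(v_j\partial_{x_j}f).
\]
Split the sum into the diagonal terms $i=j$ and the off-diagonal terms $i\neq j$.

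For the diagonal terms, the left alternative law gives $v_i(v_ia)=(v_iv_i)a=-a$, because $v_i^2=-n(v_i)=-1$ for $v_i\in\s_M$. Hence the diagonal part equals
\[
-\sum_i x_i\partial_{x_i}f=-\Eu f,
\]
which cancels the term $\Eu f$.

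For the off-diagonal terms, identity \eqref{eq:anticommute} lets us antisymmetrize:
\[
\sum_{i\ne j}x_i\,v_i(v_j\partial_{x_j}f)=\tfrac12\sum_{i\ne j}\bigl(x_i\,v_i(v_j\partial_{x_j}f)-x_j\,v_i(v_j\partial_{x_i}f)\bigr)=\tfrac12\sum_{i\neq j}v_i(v_jL_{ij}f).
\]
The second expression comes from swapping $i,j$ in half of the sum and applying \eqref{eq:anticommute}. The diagonal terms of $\tfrac12\sum_{i,j}$ vanish because $L_{ii}=0$, so this last sum is $-\Gamma_\B f$. This gives the required identity.

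The only delicate point is the nonassociativity. Everything rests on three facts: $v_i(v_ia)=-a$, the anticommutation \eqref{eq:anticommute} acting on an arbitrary $a$, and $\x^{-1}(\x a)=a$. All three follow from alternativity. The last one needs care: write $\x=\beta J$ with $J\in\s_\Aa$ and use $J(Ja)=J^2a=-a$. With these in hand, the rest is a routine bookkeeping of real coefficients, which commute with everything.
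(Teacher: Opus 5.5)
Your proof is correct. The identity $\x\bigl(\sum_{j}v_j\partial_{x_j}f\bigr)=-\Eu f-\Gamma_\B f$ follows, as you show, from left alternativity (diagonal terms) and \eqref{eq:anticommute} (antisymmetrization of the off-diagonal terms); left multiplication by $\x^{-1}=-\x/\|\x\|^2$ is then legitimate because $\x(\x a)=\x^2a=-\|\x\|^2a$. The paper does not reprove this statement; it quotes it from \cite{BinosiPerotti2025}. Your argument is the standard direct computation for it, namely the alternative-algebra analogue of the Clifford splitting of $\x$ times the Dirac operator into Euler and spherical parts, and it handles the nonassociativity correctly.
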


We can then define the \emph{spherical Dirac operator for $M$} as $\Gamma_M:=\Gamma_\B$ for any choice of the hypercomplex basis $\B$ of $M$. 



\subsection{Dunkl-Dirac operators}\label{sub:Dunkl-Dirac_operators}

We recall from \cite{DeBieGenestVinet} and \cite{Dunkl} the definition of the Dunkl operators for the abelian reflection group $\zz_2^n$. If $r_i$ is the reflection operator w.r.t.\ the $i$-th coordinate of $\rr^n$, and $k_1,\ldots,k_n\in\rr$ are the multiplicities, the \emph{Dunkl operators} $T_1,\ldots,T_n$ on $\rr^n$ associated to $\zz_2^n$ are defined as follows:
\[
T_i=\dd{}{x_i}+\frac{k_i}{x_i}(1-r_i),\quad i=1,\ldots,n,
\]
where $1$ denotes the identity operator. The operators $T_i$ commute each other and define the \emph{Dunkl-Laplace} operator $\Delta_D=\sum_{i=1}^nT_i^2$. 
In 2006 \cite{Ren_et_al} this definition was extended to the Clifford algebra $\R_n$ generated by $\{e_0=1,e_1,\ldots,e_n\}$ , introducing the Dunkl-Dirac operator $\DD=\sum_{i=1}^ne_iT_i$.  
We extend this definition to any hypercomplex subspace.

\begin{definition}\label{def:DiracDunkl}
Given a hypercomplex basis $\B=(1,v_1,\ldots,v_n)$ of the hypercomplex subspace $M$ of $\Aa$, with  coordinates $x_0,x_1,\ldots,x_n$, and an open set $\OO\subseteq M$, invariant w.r.t.\ reflections $r_i$, $i=1,\ldots,n$, the \emph{$\zz_2^n$ Dunkl-Dirac operator $\DD_\B:C^1(\OO,\Aa)\to C^0(\OO,\Aa)$} w.r.t.\ $\B$ 
is defined as
\[
\DD_\B:=\sum_{i=1}^nv_i T_{\B,i},\text{\quad where\quad} T_{\B,i}f=
\partial_{x_i}f+\frac{k_i}{x_i}(f-r_i f).
\]
The \emph{$\zz_2^n$ Dunkl-Cauchy-Riemann operator} w.r.t.\ $\B$ is defined as $D_\B:=\partial_{x_0}+\DD_\B$.
\end{definition}

The operators $T_{\B,i}$ commute each other and $D_\B$, $\DD_\B$ map $C^\ell(\OO,\Aa)$ to $C^{\ell-1}(\OO,\Aa)$. When ${\bf k}=(k_1,\ldots,k_n)=0$, then $D_\B=\dM$. 

\begin{definition}\label{def:Dunklmonogenic}
Given an open set $\OO\subseteq M$, invariant w.r.t.\ reflections $r_i$, functions $f\in C^1(\OO,\Aa)$ with $D_\B f=0$ are called (left) \emph{Dunkl monogenic functions} on $\OO$. 
\end{definition}

Let $\Delta_\B=\sum_{i=0}^n\partial_{x_i}(\partial_{x_i}\cdot)$ be the Laplacian operator induced by $\B$. 
Proposition 5 in \cite{CRoperators} proved that $\Delta_\B f=\partial_\B(\dB f)=\dB(\partial_\B f)$ for every function $f$ of class $C^2(\OO,\Aa)$, where $\partial_{\B}:=\partial_{x_0}-v_1\partial_{x_1}-\cdots -v_n\partial_{x_n}$.  Also $\partial_{\B}$  and $\Delta_\B$ do not depend on the choice of $\B$. We can then define the \emph{conjugated Cauchy-Riemann operator of $M$} and the \emph{Laplacian operator of $M$} as
\[
\partial_M:=\partial_{\B}\text{\quad  and\quad}\Delta_M:=\Delta_\B=\partial_M\dM=\dM\partial_M.
\]

 Observe that every monogenic function $f\in C^2(\OO,\Aa)$ is \emph{harmonic}, i.e., in the kernel of $\Delta_M$. In particular, every monogenic function is real analytic (see \cite[Remark 7]{BinosiPerotti2025}).

Let $D^c_\B=\partial_{x_0}-\DD_\B$ and let $\Delta_{D,M}:=D_\B D^c_\B=D^c_\B D_\B =\partial_{x_0}^2+\sum_{i=1}^n T^2_{\B,i}$ be the  \emph{Dunkl-Laplace operator} of $M$. 
Every Dunkl monogenic function of class $C^2$ is in the kernel of $\Delta_{D,M}$, i.e., \emph{Dunkl harmonic}. 
The operator $\Delta_{D,M}$ has the following form (see e.g.\ \cite[\S7.5.1]{DunklXu}):
\begin{equation}\label{eq:DeltaDM}
\Delta_{D,M}=\Delta_M+\sum_{i=1}^nk_i\left(\frac2{x_i}\partial_{x_i}-\frac{1-r_i}{x_i^2}\right).
\end{equation}


\begin{remark}
When $M$ is the paravector subspace of the Clifford algebra $\R_n$, the Dunkl-Dirac operator together with the left multiplication operator by $\x=\IM(x)$ provide a realization of the orthosymplectic Lie superalgebra $\mathfrak{osp}(1|2)$ (see \cite[Lemma 4.1]{Orsted} and \cite[Theorem 1]{deBieOrstedSombergSoucek}). 
It can proved, following the Clifford algebra case, that on any hypercomplex subspace, the operators $\DD_\B$ and $\x$ generate a superalgebra $\mathfrak{osp}(1|2)$ (see \cite[Proposition 17]{BinosiPerotti2025}). 

\end{remark}




Let  $[\ ,\ ]$ denote the commutator and $\{\ ,\ \}$ the anticommutator of a pair of real-linear operators w.r.t.\ composition. 
Following \cite{DeBieGenestVinet}, we define the \emph{Casimir operator} for the $\mathfrak{osp}(1|2)$ realization provided by $\DD_\B$:
\begin{equation}
\sC=\tfrac12\left(\left[\x,\DD_\B\right]-1\right)
\end{equation}
and the \emph{spherical Dunkl-Dirac operator} $\tGamma$ w.r.t.\ $\B$, defined as $\tGamma=\sC r$, 
where $r=\prod_{i=1}^nr_i$ is the composition of reflections $r_i$. 

\begin{proposition}\cite[Proposition 18]{BinosiPerotti2025}\label{pro:operators}
\textrm{(i)}\ The operators $\sC$ and $\tGamma$ satisfy the following commutation relations:
\begin{equation}\label{eq:commutation}
  \left\{\sC,\DD_\B\right\}=0,\ \left\{\sC,\x\right\}=0,\ \left[\tGamma,\DD_\B\right]=0,\ \left[\tGamma,\x\right]=0.
\end{equation}
Moreover, it holds
\begin{equation}\label{eq:r}
  \{\DD_\B,r\}=\{\sC,r\}=\{\x,r\}=[\Eu,r]=0.
\end{equation}
\textrm{(ii)}\ If the \emph{Dunkl weight} $\kappa=\sum_{i=1}^nk_i$ is equal to $(1-n)/2$, then the operator $\sC$ can be written as
\begin{equation}\label{casimir}
\sC=\x\,\DD_\B+\Eu.  
\end{equation}
\end{proposition}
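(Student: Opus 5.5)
The plan is to reduce everything to the elementary commutation rules of the $\zz_2^n$ Dunkl operators with coordinates and reflections. We then exploit the algebraic identities for the hypercomplex basis: \eqref{eq:anticommute}, and $v_i(v_ia)=-a$, which follows from alternativity and $v_i^2=-1$. The basic facts I would record first are:
\begin{itemize}
\item $[T_{\B,j},x_i]=\delta_{ij}(1+2k_ir_i)$;
\item $r_iT_{\B,i}=-T_{\B,i}r_i$, and $r_i$ commutes with $T_{\B,j}$ and with $x_j$ for $j\neq i$;
\item $r_ix_i=-x_ir_i$;
\item $\Eu$ commutes with every $r_i$;
\item $\sum_i x_iT_{\B,i}=\Eu+\sum_i k_i(1-r_i)$.
\end{itemize}
Since the reflections act only on the argument and not on the $\Aa$-valued coefficients, the relations \eqref{eq:r} for $\DD_\B$, $\x$ and $\Eu$ follow at once. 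Indeed $r$ anticommutes with each summand $v_iT_{\B,i}$ and $v_ix_i$, because exactly one factor $r_i$ flips sign.

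Next I would compute $[\x,\DD_\B]=\x\DD_\B-\DD_\B\x$ explicitly. Expanding $\sum_{i,j}$, the diagonal terms give $-(x_iT_{\B,i}-T_{\B,i}x_i)=1+2k_ir_i$, using $v_i(v_i\cdot)=-1$. The off-diagonal terms combine, thanks to \eqref{eq:anticommute}, into $2\sum_{i<j}v_i\bigl(v_j(x_iT_{\B,j}-x_jT_{\B,i})\bigr)$. Hence
\[
\sC=\tfrac{n-1}2+\sum_{i=1}^nk_ir_i+\sum_{i<j}v_i\bigl(v_j(x_iT_{\B,j}-x_jT_{\B,i})\bigr).
\]
The same expansion gives
\[
\x\DD_\B=-\sum_i x_iT_{\B,i}+\sum_{i<j}v_i\bigl(v_j(x_iT_{\B,j}-x_jT_{\B,i})\bigr).
\]
Using the formula for $\sum_i x_iT_{\B,i}$, this becomes $\x\DD_\B+\Eu=-\kappa+\sum_ik_ir_i+(\text{off-diagonal part})$. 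Comparing the two expressions, they coincide exactly when $-\kappa=(n-1)/2$, which proves (ii).

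For (i), I would first establish the $\mathfrak{osp}(1|2)$ structure relations:
\begin{itemize}
\item $\x^2=-|\x|^2$ is scalar and $\DD_\B^2=-\sum_iT_{\B,i}^2$, by \eqref{eq:anticommute} and commutativity of the $T_{\B,i}$;
\item $[\DD_\B^2,\x]$ is a multiple of $\DD_\B$ and $[\x^2,\DD_\B]$ is a multiple of $\x$, again from $[T_{\B,j},x_i]=\delta_{ij}(1+2k_ir_i)$ together with the $r_i$-parity rules.
\end{itemize}
With these in hand, the anticommutation relations $\{\sC,\DD_\B\}=\{\sC,\x\}=0$ follow by the standard Casimir computation for $\mathfrak{osp}(1|2)$, as in \cite{DeBieGenestVinet}: write $\sC\DD_\B+\DD_\B\sC$, expand the commutator, and use the two relations above to cancel terms. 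The statements $[\tGamma,\DD_\B]=[\tGamma,\x]=0$ then follow from $\tGamma=\sC r$ together with $\{\DD_\B,r\}=\{\x,r\}=0$. The relation $\{\sC,r\}=0$ would be checked directly from the explicit formula for $\sC$ above, keeping careful track of how each term transforms under $r$.

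The main obstacle is non-associativity. Every regrouping such as $v_i(v_j(v_k\,\cdot))$ arising when composing $\x$ with $\DD_\B$, and then with $\x$ or $\DD_\B$ again, must be justified only through \eqref{eq:anticommute} and alternativity. This is especially delicate in the Casimir computation for (i), where triple products of basis elements occur. There I would expand the terms one at a time, reduce each to the form $\pm v_a(v_b a)$ or $\pm a$ via those identities, and double-check the sign and constant conventions of $\sC$, which is where an error is most likely to creep in.
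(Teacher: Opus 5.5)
The paper gives no proof here; the proposition is quoted from \cite[Proposition 18]{BinosiPerotti2025}, so I judge your plan on its own terms.

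\textbf{What works.} Your expansion of $[\x,\DD_\B]$, the resulting formula for $\sC$, and its comparison with $\x\DD_\B+\Eu$ are correct and prove (ii).

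The anticommutators in (i) also go through, and more easily than you fear. In the associative algebra of real-linear operators under composition one has identically
\[
\{[\x,\DD_\B],\x\}=[\x^2,\DD_\B],\qquad \{[\x,\DD_\B],\DD_\B\}=[\x,\DD_\B^2].
\]
So everything reduces to four facts: $\x\circ\x=-\|\x\|^2$, $\DD_\B^2=-\sum_iT_{\B,i}^2$, $[T_{\B,j},x_j^2]=2x_j$, and $[T_{\B,j}^2,x_j]=2T_{\B,j}$. In the last one the $r_j$-terms cancel because $\{T_{\B,j},r_j\}=0$. These give $[\x^2,\DD_\B]=2\x$ and $[\x,\DD_\B^2]=2\DD_\B$. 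These are exactly the constants needed to cancel the $-1$ in $\sC$; "a multiple" is not enough, the factor must be $2$. No triple products of basis elements ever occur. Non-associativity enters only through \eqref{eq:anticommute} and $v_i(v_ia)=-a$. Your deduction of $[\tGamma,\DD_\B]=[\tGamma,\x]=0$ from $\tGamma=\sC r$ is also fine.

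\textbf{The genuine problem.} You plan to ``check directly'' the relation $\{\sC,r\}=0$. That check cannot succeed, because the relation is false as printed. Since $r$ anticommutes with both $\x$ and $\DD_\B$, it commutes with $\x\DD_\B$ and with $\DD_\B\x$, hence $[\sC,r]=0$. Equivalently, every term of your own formula $\tfrac{n-1}2+\sum_ik_ir_i+\sum_{i<j}v_i(v_j(x_iT_{\B,j}-x_jT_{\B,i}))$ commutes with $r$. Consequently $\{\sC,r\}=2\sC r=2\tGamma$, which is not the zero operator.

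For a concrete counterexample, take $n=2$, ${\bf k}=(0,-\tfrac12)$ and $f=x_1$. Then $\DD_\B f=v_1$ and $\sC f=\x v_1+x_1=x_2\,v_2v_1$. On the other hand $\sC(rf)=-x_2\,v_2v_1=r(\sC f)$. Hence $\{\sC,r\}f=-2x_2\,v_2v_1\neq0$.

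The intended relation is $[\sC,r]=0$, reflecting that the Scasimir is even. You should have detected this rather than planned to confirm the printed version. Fortunately nothing else in your argument depends on it: the commutation of $\tGamma$ with $\x$ and $\DD_\B$ uses only $\{\x,r\}=\{\DD_\B,r\}=0$ and the anticommutators of $\sC$.
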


\subsection{Dunkl-Dirac operators, sliceness and regularity}\label{sec:Dunkl-Dirac_operators}

For any hypercomplex basis $\B=(1,v_1,\ldots,v_n)$ of $M$, with coordinates $x_0,\ldots,x_n$, every homogeneous polynomial with coefficients in $\Aa$ in the $n+1$ real variables $x_0,\ldots, x_n$ define a smooth function on $M$. 
In \cite{BinosiPerotti2025} the following sliceness and slice-regularity criterion for polynomials was proved.

\begin{theorem}\cite[Theorems 21 and 28]{BinosiPerotti2025}\label{teo:poly_sliceness}
Assume that the Dunkl multiplicities satisfy $\sum_{i=1}^nk_i=(1-n)/2$. Let $f\in \Aa[x_0,\ldots,x_n]$ be a 
polynomial function with coefficients in $\Aa$. Then the following conditions are equivalent:
\begin{itemize} 
  \item[(i)] $f$ is a (left) slice polynomial, i.e., $f=\sum_{\alpha,\beta}x^\alpha (x^c)^\beta a_{\alpha,\beta}$;
  \item[(ii)] $f$ is in the kernel of the Casimir operator $\sC$; 
  \item[(iii)] $f$ is in the kernel of the spherical Dunkl-Dirac operator $\tGamma$. 
\end{itemize}
Moreover, the following conditions are equivalent:
\begin{itemize} 
  \item[(i)] $f$ is a slice-regular polynomial $f(x)=\sum_{j=0}^m x^ja_j$ with coefficients $a_j\in\Aa$;
  \item[(ii)] $f$ is Dunkl monogenic and belongs to the kernel of the Casimir operator $\sC$.
\end{itemize}
\end{theorem}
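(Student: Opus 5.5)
The plan is to prove all the equivalences together, working throughout with the Dunkl weight fixed at $\kappa=(1-n)/2$, so that Proposition \ref{pro:operators}(ii) gives $\sC=\x\,\DD_\B+\Eu$. The first set of equivalences has one cheap step. Since $r^2=1$ and $\tGamma=\sC r$, the kernels of $\sC$ and $\tGamma$ satisfy $\ker\tGamma=r(\ker\sC)$. By \eqref{eq:r}, $\sC$ anticommutes with $r$. Hence $r$ preserves $\ker\sC$, and therefore $\ker\sC=\ker\tGamma$. This gives (ii)$\Leftrightarrow$(iii), and the real work is (i)$\Leftrightarrow$(ii).

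\emph{Direct implication (i)$\Rightarrow$(ii).} Since $x^c=2x_0-x$ and $\x^2=-|\x|^2$, every slice polynomial can be written as $f=A(x_0,\rho)+\x\,B(x_0,\rho)$, where $\rho=|\x|^2$ and $A,B$ are polynomials with coefficients in $\Aa$. I then check $\sC A=0$ and $\sC(\x B)=0$ by direct computation.
\begin{itemize}
\item $A$ is even in each $x_i$, so $T_{\B,i}A=\partial_{x_i}A=2x_iA_\rho$. Hence $\DD_\B A=2\x A_\rho$ and $\x\DD_\B A=-2\rho A_\rho$, while $\Eu A=2\rho A_\rho$. The two terms cancel.
\item $x_iB$ is odd in $x_i$, so $T_{\B,i}(x_iB)=B+2x_i^2B_\rho+2k_iB$, and $T_{\B,j}(x_iB)=2x_ix_jB_\rho$ for $j\neq i$.
\item Summing with the anticommutation rule \eqref{eq:anticommute} gives $\DD_\B(\x B)=-(n+2\kappa)B+2\rho B_\rho=-B+2\rho B_\rho$, using $n+2\kappa=1$.
\item Then $\x\DD_\B(\x B)=-\x B+2\rho\,\x B_\rho$. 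On the other side $\Eu(\x B)=\x B+2\rho\,\x B_\rho$ \ldots
\end{itemize}
The signs in this last step must be tracked carefully using alternativity, and this is where I expect the computation to close to $0$.

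\emph{Converse (ii)$\Rightarrow$(i).} I treat $x_0$ as a parameter and decompose $f$ by degree in $(x_1,\dots,x_n)$. For this I use the Dunkl--Fischer decomposition coming from the $\mathfrak{osp}(1|2)$ structure: every homogeneous polynomial in $(x_1,\dots,x_n)$ is uniquely a sum $\sum_{j,k}\x^j M_k$, where each $M_k$ is homogeneous of degree $k$ and $\DD_\B M_k=0$. By \eqref{eq:commutation}, $\sC$ anticommutes with $\x$, so $\sC(\x^jM_k)=(-1)^j\x^j\sC M_k$. Using $\DD_\B M_k=0$ together with the $\mathfrak{osp}(1|2)$ relation $\{\x,\DD_\B\}=-2(\Eu+\tfrac{n}{2}+\kappa)$, one gets $\sC M_k=c_kM_k$ with $|c_k|=k+\tfrac{n-1}{2}+\kappa=k$. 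So $c_k$ vanishes only when $k=0$. Consequently $\ker\sC$ is spanned by the terms $\x^j a(x_0)$, and these are slice polynomials because $\x=x-x_0$. The main obstacle is this step: establishing the Fischer decomposition and the exact eigenvalues in the nonassociative alternative setting. I would derive both purely from the $\mathfrak{osp}(1|2)$ relations already available through \cite[Proposition 17]{BinosiPerotti2025}. I would also check that the relation \eqref{eq:anticommute} is sufficient to move left multiplications past one another wherever the argument needs it.

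\emph{Second equivalence.} Let $f\in\ker\sC$. Then \eqref{casimir} gives $\x\DD_\B f=-\Eu f$ on $M\setminus\rr$, so $\DD_\B f=-\x^{-1}\Eu f$ and hence $D_\B f=\partial_{x_0}f-\x^{-1}\Eu f=\difbarM f$. For a slice $f$, \cite[Theorem 2.2]{Gh_Pe_GlobDiff} says $f$ is slice-regular exactly when $\difbarM f=0$ off the real axis. By continuity this is equivalent to $D_\B f=0$. A slice-regular polynomial is precisely one of the form $\sum_j x^ja_j$. Combining this with the first part yields the second equivalence.
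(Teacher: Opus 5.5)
Your overall structure is sound. The equivalence (ii)$\Leftrightarrow$(iii) is correct, and so is the second equivalence via $D_\B f=\difbarM f$ on $\ker\sC$. One remark on (ii)$\Leftrightarrow$(iii): $\sC$ actually \emph{commutes} with $r$, because $\x$ and $\DD_\B$ both anticommute with $r$ and $\Eu$ commutes with it. So the relation $\{\sC,r\}=0$ in \eqref{eq:r} is a misprint, but your conclusion survives since $\tGamma f=\pm r\,\sC f$ either way. The two steps that carry the content of (i)$\Leftrightarrow$(ii), however, are not finished.

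\textbf{A sign error in (i)$\Rightarrow$(ii).} The computation does not close as written: with your value of $\DD_\B(\x B)$ you would get $\sC(\x B)=4\rho\,\x B_\rho\neq0$. The rule $v_i(v_ia)=-a$ has to be applied to all three diagonal terms, and the off-diagonal terms cancel by \eqref{eq:anticommute}. This gives
\[
\DD_\B(\x B)=-\sum_{i=1}^n\big(B+2x_i^2B_\rho+2k_iB\big)=-(n+2\kappa)B-2\rho B_\rho=-B-2\rho B_\rho .
\]
Then $\x\DD_\B(\x B)=-\x B-2\rho\,\x B_\rho$ cancels exactly against $\Eu(\x B)=\x B+2\rho\,\x B_\rho$. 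There is no alternativity issue at the last step, since $\rho$ is real. You can also skip the computation entirely. Write a slice polynomial as $\sum_{m,j}x_0^m\x^ja_{m,j}$; then $\{\sC,\x\}=0$ gives $\sC(x_0^m\x^ja)=(-1)^j\x^j\sC(x_0^ma)=0$.

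\textbf{An unproved decomposition in (ii)$\Rightarrow$(i).} Here the Fischer decomposition is the whole content, and you only assert it. (The paper notes that this is how the Clifford case was done in the literature.) It cannot simply be quoted, because here the multiplicities are negative ($\kappa=(1-n)/2$) and the algebra is only alternative. It does hold, by an induction on the degree. Let $V_k$ denote the $\Aa$-valued polynomials homogeneous of degree $k$ in $x_1,\dots,x_n$. Take $M_l\in V_l$ with $\DD_\B M_l=0$.
\begin{enumerate}
\item Using $T_{\B,i}(x_if)=x_iT_{\B,i}f+f+2k_ir_if$, $v_i(v_ia)=-a$ and \eqref{eq:anticommute}, one gets $\{\x,\DD_\B\}=-2\Eu-(n+2\kappa)=-2\Eu-1$.
\item Hence $\DD_\B(\x^mM_l)=c_{m,l}\x^{m-1}M_l$ with $c_{2s,l}=-2s$ and $c_{2s+1,l}=-(2s+2l+1)$, which are never zero for $m\ge1$.
\item By induction this gives $\ker\DD_\B\cap\x V_{k-1}=0$.
\item Rank--nullity for $\DD_\B:V_k\to V_{k-1}$ then gives $V_k=(\ker\DD_\B\cap V_k)\oplus\x V_{k-1}$.
\end{enumerate}

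\textbf{A shorter route.} The paper itself uses a simpler argument in the proof of Proposition \ref{pro:basis}, which needs neither Fischer nor eigenvalues. Let $g\in V_d$ with $\sC g=0$.
\begin{itemize}
\item By \eqref{casimir}, $dg=-\x\DD_\B g$.
\item By $\{\sC,\DD_\B\}=0$, $\DD_\B g\in\ker\sC\cap V_{d-1}$.
\item Iterating, $g=\frac1{d!}(-\x)^d\DD_\B^dg$, where $\DD_\B^dg\in V_0=\Aa$.
\end{itemize}
So $g=\x^da$. Expanding $f=\sum_m x_0^mf_m$, and using that $\sC$ involves neither $x_0$ nor $\partial_{x_0}$, shows that every polynomial in $\ker\sC$ is slice.
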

The Clifford algebra case for polynomials was proved in \cite{binosi2025dunklapproachsliceregular} using the Fischer decomposition associated to the Dunkl-Dirac operator.
In \cite{BinosiPerotti2025} we were able to extend the characterization of sliceness to any $C^1$ function on $\OO\subseteq M$. To achieve this, we need two more operators related to the Casimir operator.
Let $\OO\subseteq M$ be axially symmetric. We set
\[
\wsC:=\sum_{i=1}^nx_i T_{\B,i}-\Eu,\quad \sC':=\tfrac12\wsC\left(1+r\right),
\quad \sC'':=\sC'\x.
\]



\begin{theorem}\cite[Theorems 23 and 29]{BinosiPerotti2025}\label{teo:C1_sliceness}
Let $n\ge2$. Assume that the Dunkl multiplicities $k_i$ are non-positive, with at most one vanishing, and that $\sum_{i=1}^nk_i=(1-n)/2$. Let $\OO\subseteq M$ be open and axially symmetric and $f\in C^1(\OO,\Aa)$. Then the following conditions are equivalent:
\begin{itemize} 
  \item[(i)] $f$ is a slice function on $\OO$;
  \item[(ii)] $f$ is in the kernel of the operators $\sC$, $\sC'$ and $\sC''$. 
\end{itemize}
Moreover, the following conditions are equivalent:
\begin{itemize} 
  \item[(i)] $f$ is slice-regular on $\OO$;
  \item[(ii)] $f$ is Dunkl monogenic and belongs to the kernel of the operators $\sC$, $\sC'$ and $\sC''$. 
\end{itemize}
\end{theorem}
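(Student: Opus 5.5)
The plan is to reduce both equivalences to statements about the two spherical components $\vs f$ and $\vs{(\x f)}$ in the decomposition \eqref{eq:spherical}. I will then use the proposition recalled from \cite[Proposition 9]{BinosiPerotti2025}: a continuous $f$ is slice if and only if both components are constant on every sphere $\alpha+\beta\,\s_M$.

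First I make the operators explicit. Since $r=\prod_i r_i$ acts on functions by $(rf)(x)=f(x^c)$, the operator $\frac12(1+r)$ is exactly $f\mapsto\vs f$. Moreover
\[
\wsC=\sum_{i=1}^n x_iT_{\B,i}-\Eu=\sum_{i=1}^n k_i(1-r_i).
\]
Hence $\sC' f=\sum_i k_i(1-r_i)\vs f$ and $\sC''f=\sum_i k_i(1-r_i)\vs{(\x f)}$. So $\sC'f=\sC''f=0$ says that $g=\vs f$ and $g=\vs{(\x f)}$ satisfy $\sum_i k_i(g-r_ig)=0$.

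Next I use \eqref{casimir}, valid since $\kappa=(1-n)/2$, to write $\sC=\x\DD_\B+\Eu$. Expanding $\x\DD_\B$ with the anticommutation rule \eqref{eq:anticommute}, I expect $\sC$ to split as $\Gamma_\B$ (up to sign), a purely tangential part built from the $L_{ij}$, plus reflection terms of the form $\sum_i k_i\,\x\,v_i x_i^{-1}(1-r_i)$. On functions that are even or odd under $r$ (after multiplication by $\x$), these reflection terms should be controlled by the conditions obtained from $\sC'$ and $\sC''$.

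The key step, and the expected main obstacle, is to extract from the kernel conditions that each of $\vs f$ and $\vs{(\x f)}$ is separately invariant under every $r_i$ and annihilated by every $L_{ij}$. This is where the hypotheses on the multiplicities enter: the $k_i$ are non-positive, at most one vanishes, and $\sum_i k_i=(1-n)/2$. My plan is to split each $g$ into its $\zz_2^n$-isotypic components, indexed by the sign characters $\varepsilon$. On the $\varepsilon$-component, $\sum_i k_i(1-r_i)$ acts as the scalar $2\sum_{\varepsilon_i=-1}k_i$. This scalar vanishes only if all $k_i$ with $\varepsilon_i=-1$ vanish. Because at most one $k_i$ is zero, the only surviving non-invariant component is the one odd in that single variable. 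I will then remove it with the vanishing of the $\x$-component of $\sC f$, which should force that part to be zero.

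Once $g$ is $r_i$-invariant for all $i$, the remaining condition $\sC f=0$ reduces to $\Gamma_\B$-type tangential equations. These give $L_{ij}g=0$ for all $i,j$, so $g$ is invariant under $SO(n)$ acting on $\IM(x)$. Combined with the reflections this gives full $O(n)$-invariance, i.e.\ $g$ is constant on the spheres, and $f$ is slice. The converse direction is a direct verification: for slice $f$ both components depend only on $(x_0,|\x|)$, so all the above terms vanish.

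For the second equivalence I assume $f$ is slice, so $\sC f=\sC' f=\sC'' f=0$. I compare $D_\B f$ with $\dM f$: by \eqref{eq:DeltaDM}-type reasoning, the Dunkl correction $\sum_i v_ik_ix_i^{-1}(f-r_if)$ evaluated on a slice function should become a multiple of $\x^{-1}$ times the odd spherical part. Then Theorem \ref{teo:difference}, together with the known value of $\Gamma_\B$ on slice functions, shows that $D_\B f=\difbarM f$ when $\kappa=(1-n)/2$. By \cite[Theorem 2.2]{Gh_Pe_GlobDiff}, $\difbarM f=0$ on $\OO\setminus\rr$ is equivalent to slice-regularity, and continuity extends the conclusion across $\rr$.
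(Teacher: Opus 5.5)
Your outline is correct, and its key step takes a different route from the one the paper points to. The paper does not reproduce the proof. It only says that the argument of \cite{BinosiPerotti2025} rests on the Perron--Frobenius theorem for non-negative matrices. Presumably this is applied to $-\sum_ik_ir_i$ acting on the values of a function along a $\zz_2^n$-orbit; that matrix is non-negative precisely because the $k_i$ are non-positive, and all its row sums equal $|\kappa|$.

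You instead diagonalize $\wsC=\sum_ik_i(1-r_i)$ by the characters of the abelian group $\zz_2^n$, with eigenvalue $2\sum_{\varepsilon_i=-1}k_i$ on the $\varepsilon$-component. This is more elementary and more precise. It shows that the hypotheses on the multiplicities serve only to ensure $\sum_{i\in S}k_i\neq0$ for $|S|\geq2$ even, whereas a Perron--Frobenius argument needs the sign structure but not the explicit spectrum. Your treatment of the second equivalence is correct: for slice $f$, $D_\B f=\dM f+(n-1)f'_s=\difbarM f$ by Theorem \ref{teo:difference}, and then \cite[Theorem 2.2]{Gh_Pe_GlobDiff} and continuity finish it.

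Two steps should be repaired.

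First, the component odd only in the variable with $k_{i_0}=0$ need not be removed using $\sC f$. Both $\vs f$ and $\vs{(\x f)}$ are even under $r=\prod_ir_i$, so characters with $\prod_i\varepsilon_i=-1$ never occur. Hence $\sC'f=\sC''f=0$ alone is equivalent to full $\zz_2^n$-invariance of $g_1=\vs f$ and $g_2=\vs{(\x f)}$.

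Second, ``these give $L_{ij}g=0$'' is asserted for what is a single $\Aa$-valued equation coupling $g_1$ and $g_2$, so it needs an argument. By \eqref{casimir} and Theorem \ref{teo:difference}, $\sC=-\Gamma_\B+\x\big(\sum_ik_iv_i\delta_1^i\,\cdot\,\big)$. For invariant $g_1,g_2$, using $\Gamma_\B(\x h)=\x\big((n-1)h-\Gamma_\B h\big)$, one computes on $\OO\setminus\rr$
\[
\sC\big(g_1+\x^{-1}g_2\big)=-\Gamma_\B g_1+\x^{-1}\Gamma_\B g_2 .
\]
Here two terms $\pm(n-1)\x^{-1}g_2$, one from $\Gamma_\B(\x^{-1}g_2)$ and one from the reflection part, cancel exactly because $\kappa=(1-n)/2$. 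So, contrary to your converse sentence, the individual terms do not vanish on a slice function; they cancel.

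To separate the conditions, note that the second summand is odd under $r$. The $(i,j)$ term $v_i(v_jL_{ij}g_1)$ of $-\Gamma_\B g_1$ lies in the isotypic component odd exactly in $x_i,x_j$, which is different for different pairs. Projecting onto components yields $L_{ij}g_1=0$, and likewise $L_{ij}g_2=0$. The same identity also gives the converse.
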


 The proof of Theorem \ref{teo:C1_sliceness} given in \cite{BinosiPerotti2025} is based on the Perron-Frobenius Theorem about spectral properties of non-negative matrices.

\section{Dunkl-regular function spaces} 
\label{sec:dunkl_regular_function_spaces}

\subsection{Intermediate Dunkl-Dirac operators} 
\label{sub:Intermediate_Dunkl-Dirac_operators}

Following the definitions made in the Clifford algebra case \cite{DeBieGenestVinet}, we introduce the intermediate $\zz_2^n$ Dunkl-Dirac operators of $M$ w.r.t.\ $\B$. 

\begin{definition}\label{def:DiracDunklA}
Let $A\subseteq[n]=\{1,\ldots,n\}$ and suppose that the open set $\OO\subseteq M$ is \emph{$A$-symmetric}, i.e.\ invariant w.r.t.\ reflections $r_i$ for all $i\in A$. Let $r_A=\prod_{i\in A}r_i$, $\x_A=\sum_{i\in A}x_iv_i$ and $\Eu_A=\sum_{i\in A}x_i\partial_{x_i}$.
We define 
\[\DD_A=\sum_{i\in A}v_i T_{\B,i},\quad \SA=\tfrac12\left(\left[\x_A,\DD_A\right]-1\right),
\]
\[
\wSA:=\sum_{i\in A}x_i T_{\B,i}-\Eu_A,\quad \SA':=\tfrac12\wSA\left(1+r_A\right),\quad \SA'':=\SA'\x_A
\]
and $\mathscr S_A=(\SA,\SA',\SA''):C^1(\OO,\Aa)\to (C^0(\OO,\Aa))^3$.
\end{definition}

These operators satisfy commutation relations similar to the case $A=[n]$, when $\x_A=\x$, $\DD_A=\DD_\B$ and $\SA=\sC$ \cite[Proposition 32]{BinosiPerotti2025}. 

In the following, for any $A\subseteq[n]$, we will always assume that the Dunkl multiplicities $k_i$ in $\DD_A$ and $\SA$ are non-positive, with $k_i=0$ for at most one index $i\in A$ and $\sum_{i\in A}k_i=(1-|A|)/2$. 
Under these conditions the operator $\SA$ can be written, as in Proposition \ref{pro:operators}, in the form
\begin{equation}\label{eq:SA}
  \SA=\x_A\DD_A+\Eu_A,
\end{equation}
and it holds $\mathscr S_{\{i\}}=0$ for every $i=1,\ldots,n$.  

\begin{definition}
Let $\emptyset\not=A\subseteq[n]$ and let $\OO\subseteq M$ be $A$-symmetric. 
Given a function $f:\OO\to\Aa$, we define the \emph{$A$-spherical value} of $f$ as the function $f^\circ_{s,A}:\OO\to\Aa$ defined by
\[
f^\circ_{s,A}(x)=\tfrac12(f(x)+f(\bar x^A)).
\]
Here $\bar x^A=r_A(x)\in\OO$ is the reflected point w.r.t.\ $A$.
\end{definition}

\begin{remark}\label{rem:sliceness_continuity_A}
By definitions, the equality $f=f^\circ_{s,A}+\x_A^{-1}{(\x_A f)}^\circ_{s,A}$ holds on $\OO\setminus\{\x_A=0\}$.
If $f\in C(\OO,\Aa)$, then the function $g:=\|\x_A\|^{-1}(\x_Af)^\circ_{s,A}$ extends continuously to $\OO$ with value zero on $\OO\cap\{\x_A=0\}$ (see the proof of \cite[Proposition 9]{BinosiPerotti2025}).
\end{remark}

Let $A=\{i_1,\ldots,i_\ell\}\subseteq[n]$. Let $M_A$ be the hypercomplex subspace $M_A:=\langle 1,v_{i_1},\ldots,v_{i_\ell}\rangle\subseteq M$ and $\s_{M_A}=\s_\Aa\cap M_A$ the $(\ell-1)$-dimensional unit sphere in $M_A\cap\ker(t)$.

\begin{definition}
Let $\emptyset\ne A\subseteq[n]$ and $A^c=[n]\setminus A=\{j_1,\ldots,j_{n-\ell}\}$ with $j_1<\cdots <j_{n-\ell}$. A subset $\OO$ of $M$ is called \emph{$A$-circular} if for any $x=x_0+\x_A+\x_{A^c}\in\OO$, with $0\ne\x_A=\beta I$ and $I\in \s_{M_A}$, $\beta\in\rr$,  the element $x_J:=x_0+\beta J+\x_{A^c}$ of $M$ belongs to $\OO$ for every $J\in \s_{M_A}$. 
Equivalently, $\OO$ is $A$-circular if there exists $E=E'\times E''\subseteq\cc\times\rr^{n-\ell}$ such that $\OO=\OO_E$, where 
\[\textstyle
\OO_E:=\left\{x_0+\beta J+\sum_{i=1}^{n-\ell}v_{j_i}y_i\;|\; (x_0+i\beta,y)\in E, J\in \s_{M_A}\right\}.
\]
\end{definition}

Every $A$-circular set is $A$-symmetric. 
A set $\OO$ is axially symmetric if and only if it is $[n]$-circular. Therefore $\OO$ is $A$-circular in $M$ if and only if $\OO\cap M_A$ is axially symmetric in $M_A$. 





\subsection{Dunkl-regular functions spaces}
\label{sub:Dunkl-regular_function_spaces}

Given a partition  $\PP=\{A_1,\ldots,A_\ell\}$ of the set $[n]$, a \emph{$\PP$-admissible} sequence of Dunkl multiplicities is an $n$-tuple ${\bf k}=(k_1,\ldots,k_n)$ of non-positive real numbers such that, for every $j=1,\ldots,\ell$, it holds $2\sum_{i\in A_j}k_i=1-|A_j|$ and $k_i=0$ for at most one index $i\in A_j$. 

Let $\B=(1,v_1,\ldots,v_n)$ be a hypercomplex basis of the hypercomplex subspace $M$ of $\Aa$, with  coordinates $x_0,x_1,\ldots,x_n$. 

\begin{definition}\label{def:P-Dunkl-regular}
Let $\OO\subseteq M$ be open and $\zz_2^n$-invariant. We call \emph{$\PP$-Dunkl-regular 
functions} on $\OO$ the elements of the real vector space (a right $\Aa$-module if $\Aa$ is associative)
\[
\F_{\PP}(\OO)=\F_{A_1,\ldots,A_\ell}(\OO):=\{f\in C^1(\OO,\Aa)\;|\; f\in\ker D_\PP\cap\ker \mathscr S_\PP\},
\] 
where $D_\PP$ is the \emph{Dunkl-Cauchy-Riemann operator} 
\begin{equation}\label{eq:DP}
  D_\PP=\partial_{x_0}+\sum_{j=1}^\ell\DD_{A_j}=\dM+\sum_{i=1}^n \frac{k_iv_i}{x_i}(1-r_i),
\end{equation}
and $\mathscr S_\PP=(\mathscr S_{A_1},\ldots,\mathscr S_{A_\ell})$. We say that a function is \emph{Dunkl-regular} if it is $\PP$-Dunkl-regular for some $\PP$. 
The \emph{conjugated Dunkl-Cauchy-Riemann operator} $D^c_\PP$ is defined as
$D^c_\PP=\partial_{x_0}-\DD_\PP$, where $\DD_\PP=\sum_{j=1}^\ell\DD_{A_j}$ is the \emph{Dunkl-Dirac operator}. 
\end{definition}

It holds $D_\PP D^c_\PP=D^c_\PP D_\PP=\Delta_{D,M}$, the Dunkl Laplacian. Since $\F_{\PP}(\OO)\subseteq\ker D_\PP$, every Dunkl-regular function is Dunkl monogenic and  Dunkl harmonic  on $\OO$.


\begin{examples}\label{ex:FP}We give some examples of Dunkl-regular function spaces.
\begin{itemize}
  \item[(i)]
  $\F_{\{1\},\ldots,\{n\}}(\OO)=\ker\dM=\M(\OO)$  on any open subset $\OO$ of $M$.
  \item[(ii)]
  $\F_{[n]}(\OO)=\sr(\OO)$ on any axially symmetric open subset $\OO$ of $M$.
  \item[(iii)]
  On the Clifford algebra $\rr_n$, the function space $\F_\PP(\OO)$, with partition \[\PP=\{\{1\},\ldots,\{p\},\{p+1,\ldots,n\}\}\] is the space of \emph{generalized partial-slice monogenic functions of type $(p,n-p)$} \cite{Sabadini_Xu_TAMS}.  
  \item[(iv)]
Given a sequence $T=(t_0,\ldots,t_\tau)$ of integers with $\tau>0$ and $0\le t_0<t_1<\cdots <t_\tau=n$, let $\PP$ be the partition
\[
\PP=\{\{1\},\ldots,\{t_0\},\{t_0+1,\ldots,t_1\},\{t_1+1,\ldots,t_2\},\ldots,\{t_{\tau-1}+1,\ldots,t_\tau\}\}.
\]
If $\OO$ is a $\PP$-circular open subset of $M$, i.e., $\OO$ is $A_j$-circular for every $j=1,\ldots,\ell$,  then $\F_\PP(\OO)$  is the set of \emph{$T$-regular functions} of class $C^1$ on $\OO$ \cite{GhiloniStoppato_arXiv24}. 
\end{itemize}
\end{examples}


Let $\PP=\{A_1,\ldots,A_\ell\}$ be a partition of the set $[n]$. Then $\PP$ defines a partition of the number $n$ as $n=|A_1|+\cdots+|A_\ell|$. 
The \emph{number of partitions} of the set $[n]=\{1,\ldots,n\}$ is the \emph{Bell number} $B_n$, while the number of partitions of the integer $n$ is the \emph{partition number} $p(n)$.

\begin{theorem}\cite[Theorem 60]{BinosiPerotti2025}\label{teo:classification1}
Let $n=\dim M-1$ and let $\OO\subseteq M$ be open and $\zz_2^n$-invariant. 
The function space $\F_\PP(\OO)$ does not depend on the Dunkl multiplicities of the operators $D_\PP$ and $\mathscr S_\PP$, provided that they are $\PP$-admissible. Given two different partitions $\PP$ and $\PP'$ of the set $[n]$, the function spaces $\F_\PP(\OO)$ and $\F_{\PP'}(\OO)$ are distinct. 
\end{theorem}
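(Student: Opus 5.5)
The statement has two parts. First, $\F_\PP(\OO)$ is the same space for every $\PP$-admissible choice of multiplicities. Second, different set partitions give different spaces.

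\emph{Independence of the multiplicities.} I would first work block by block. Fix $A=A_j$, let $f\in\ker D_\PP\cap\ker\mathscr S_\PP$, and split $f$ on each $A$ as $f=f^\circ_{s,A}+\x_A^{-1}(\x_Af)^\circ_{s,A}$ (Remark \ref{rem:sliceness_continuity_A}). The conditions $\SA f=\SA'f=\SA''f=0$ are the analogue, for the subspace $M_A$, of Theorem \ref{teo:C1_sliceness}, with the coordinates outside $A$ treated as parameters. Applying that theorem to $M_A$ with $n$ replaced by $|A|$ shows that $\ker\mathscr S_A$ is exactly the space of $A$-slice functions, i.e.\ functions that are slice in the variables of $M_A$. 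This description does not involve the $k_i$. When $|A|=1$ there is nothing to prove, since $\mathscr S_{\{i\}}=0$.

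Next I would evaluate the Dunkl difference terms on such $f$. For $i\in A$, the quantity $\frac{1}{x_i}(f-r_if)$ should be expressible through $\x_A^{-1}$ and the spherical parts. Using \eqref{eq:SA} and the fact that $\SA f=0$, one gets
\[
\DD_Af=-\x_A^{-1}\Eu_Af,
\]
and the right-hand side involves no multiplicities. Hence
\[
D_\PP f=\partial_{x_0}f+\sum_j \bigl(-\x_{A_j}^{-1}\Eu_{A_j}f\bigr).
\]
Here $\x_{A_j}^{-1}$ is understood off $\{\x_{A_j}=0\}$ and extended by continuity, using Remark \ref{rem:sliceness_continuity_A}. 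This is a global operator of the type $\difbarM$ in each block, and it does not depend on ${\bf k}$. So the kernel is the same for all admissible ${\bf k}$.

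\emph{Distinctness.} Take $\PP\ne\PP'$. Then, possibly after swapping the two partitions, there are indices $i,j$ lying in a common block $A$ of $\PP$ but in different blocks of $\PP'$. I would exhibit a function in one space and not the other, by examining which symmetries and which ``regular'' polynomials the two spaces allow. A natural candidate is a degree-one function built from $x_0$ and the $v_i,v_j$ coordinates. For example, $\F_{\PP'}$ contains $x_0 - x_iv_i$-type monogenic-in-$i$ pieces that are not $A$-slice for $\PP$. The concrete choice is
\[
f=x_0v_i-x_i,
\]
suitably multiplied so that it is annihilated by $D_{\PP'}$. This $f$ is not $A$-slice in the sense of $\PP$, because its $A$-spherical structure is not radial in $(x_i,x_j)$. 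Conversely, a slice polynomial in the variables of $M_A$, such as $(x_0+\x_A)^2$, lies in $\F_\PP$. Its restriction to the $(x_i,x_j)$-plane is not monogenic, because of the constant $\partial_M$-term, so it fails $D_{\PP'}f=0$. Which of the two inclusions fails depends on which partition is finer. If neither refines the other, either example works.

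\emph{Main obstacle.} The delicate step is the first one: justifying the localization of Theorem \ref{teo:C1_sliceness} to a single block $M_A$ with the other variables frozen. This also needs the $C^1$ regularity across the set $\{\x_A=0\}$, which is where the Perron--Frobenius argument of \cite{BinosiPerotti2025} was used. I would try to reduce to that theorem by fixing $x_0$ and $x_{A^c}$ and arguing pointwise in the remaining parameters.
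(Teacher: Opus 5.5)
Your independence argument is sound. Once $\ker\mathscr S_A$ is identified with the ${\bf k}$-free class of Proposition~\ref{pro:SA} (the localization you flag as the main obstacle is exactly that proposition), the identity $\x_A\DD_Af=-\Eu_Af$ gives a ${\bf k}$-free expression for $\DD_Af$ wherever $\x_A\ne0$. Since $D_\PP f$ is continuous for every admissible ${\bf k}$ and $\bigcap_j\{\x_{A_j}\ne0\}$ is dense, the operators agree on $\ker\mathscr S_\PP$; this is equivalent to \eqref{eq:dp}.

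The distinctness part, however, does not work as written. Your first example $f=x_0v_i-x_i=(x_0+x_iv_i)v_i$ lies in $\F_{\PP'}(\OO)$ only when $\{i\}$ is a singleton block of $\PP'$. If $i\in B'\in\PP'$ with $|B'|\ge2$, then \eqref{eq:SA} gives $\underline S_{B'}f=2k'_ix_i-(1+2k'_i)\sum_{m\in B'\setminus\{i\}}x_m\,v_mv_i\ne0$, and no right multiplication by a constant repairs this. For $\PP=\{\{1,2\},\{3,4\}\}$ and $\PP'=\{\{1,3\},\{2,4\}\}$ there is no singleton at all. For your second example, the reason you give for $(x_0+\x_A)^2\notin\F_{\PP'}(\OO)$ is not valid. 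The operator $D_{\PP'}$ is not $\dM$, and ``the restriction to the $(x_i,x_j)$-plane is not monogenic'' holds just as well when $i,j$ lie in the same block. The same reasoning would therefore prove $(x_0+\x_A)^2\notin\F_\PP(\OO)$, which is false. The fact that $i$ and $j$ lie in different blocks of $\PP'$ is never actually used.

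What is missing is a case analysis, and a single test function handles every case. So there is no need to decide which partition is finer; typically neither inclusion holds, e.g.\ for $\sr(\OO)$ and $\M(\OO)$. Take $x_A=x_0+\x_A$, which lies in $\F_\PP(\OO)$ (as noted after Proposition~\ref{pro:basis}), and any $\PP'$-admissible ${\bf k}'$.

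\emph{Case 1.} Some $C\in\PP'$ meets $A$ without being contained in it. Then \eqref{eq:SA} gives $\underline S_C\,x_A=c\,\x_{C\setminus A}+(c+1)\,\x_{C\cap A}$ with $c=-\sum_{m\in C\cap A}(1+2k'_m)$. This cannot vanish identically, since $C\setminus A$ and $C\cap A$ are both nonempty, so $x_A$ is not $\PP'$-slice.

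\emph{Case 2.} Otherwise $A$ is the union of $r$ blocks of $\PP'$, and $r\ge2$ because $i$ and $j$ lie in different ones. A direct computation then gives $D_{\PP'}x_A=(1-|A|)-2\sum_{m\in A}k'_m=1-r\ne0$.

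In either case $x_A\in\F_\PP(\OO)\setminus\F_{\PP'}(\OO)$, which is the distinctness claim.
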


\begin{corollary}\label{cor:Kreg}
Let $\PP=\{A_1,\ldots,A_\ell\}$. The Dunkl multiplicities $k_i$ of the operators $D_\PP$, $\mathscr S_\PP$ defining the space $\F_\PP(\OO)$ can be taken as $k_i=-\tfrac12+\frac1{2|A_j|}\in(-\tfrac12,0]$ for every $i\in A_j$ and $j=1,\ldots,\ell$. Another possible choice of multiplicities is obtained taking $k_i\in\{0,-1/2\}$, with exactly one $\alpha_j\in A_j$, for every $j=1,\ldots,\ell$, such that $k_{\alpha_j}=0$. 
\end{corollary}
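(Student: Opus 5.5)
The plan is to derive the corollary directly from Theorem \ref{teo:classification1}. That theorem says $\F_\PP(\OO)$ does not depend on the Dunkl multiplicities, as long as they are $\PP$-admissible. So it suffices to check that each of the two proposed choices of ${\bf k}=(k_1,\ldots,k_n)$ is $\PP$-admissible. By the definition preceding Definition \ref{def:P-Dunkl-regular}, this means verifying three conditions for every block $A_j$ of $\PP=\{A_1,\ldots,A_\ell\}$:
\begin{itemize}
\item[(a)] every $k_i$ is non-positive;
\item[(b)] $2\sum_{i\in A_j}k_i=1-|A_j|$;
\item[(c)] $k_i=0$ for at most one index $i\in A_j$.
\end{itemize}
The blocks are disjoint, so each block can be handled separately.

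For the first choice, fix $j$ and put $m=|A_j|\ge1$, with $k_i=-\tfrac12+\tfrac1{2m}$ for $i\in A_j$. Since $\tfrac1{2m}\in(0,\tfrac12]$, we get $k_i\in(-\tfrac12,0]$, so (a) holds and the stated interval is correct. Summing over $A_j$ gives $\sum_{i\in A_j}k_i=-\tfrac m2+\tfrac12=\tfrac{1-m}2$, which is (b). For (c), a zero multiplicity occurs only when $m=1$, and then $A_j$ has a single index, so there is at most one zero.

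For the second choice, each $A_j$ contains exactly one index $\alpha_j$ with $k_{\alpha_j}=0$, and the other $m-1$ indices have $k_i=-\tfrac12$. Conditions (a) and (c) are then immediate. The sum over $A_j$ is $-\tfrac{m-1}2=\tfrac{1-m}2$, so (b) holds.

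Both sequences are therefore $\PP$-admissible, and Theorem \ref{teo:classification1} shows they define the same space $\F_\PP(\OO)$ as any other admissible choice. There is no real obstacle here. The only point needing care is the singleton blocks: the vanishing multiplicity there must be allowed by the ``at most one zero'' clause, and it is.
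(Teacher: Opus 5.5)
Your proposal is correct and follows the same route as the paper: the corollary is stated as an immediate consequence of Theorem \ref{teo:classification1}, obtained by checking that both sequences of multiplicities are $\PP$-admissible. Your blockwise verification of non-positivity, of $2\sum_{i\in A_j}k_i=1-|A_j|$, and of the at-most-one-zero condition (including the singleton case) is exactly what that requires.
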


In view of the Corollary, the Dunkl multiplicities of the operators $D_\PP$, $\mathscr S_\PP$ in $\F_\PP(\OO)$, although non-positive, can be chosen in the \emph{regular set} $K^{reg}$ (see e.g.\ \cite[\S2.4]{Rosler}) for the reflection group $\zz_2^n$.  

Let $\sigma$ be a permutation of the set $[n]$. Given a partition $\PP=\{A_1,\ldots,A_\ell\}$ of $[n]$, let $\PP_\sigma=\{B_1,\ldots,B_\ell\}$ be the partition defined by $B_i=\sigma^{-1}(A_i)$, $i=1,\ldots,\ell$. 
A sequence ${\bf k}=(k_1,\ldots,k_n)$ of Dunkl multiplicities is $\PP$-admissible if and only if the permuted sequence ${\bf k}_\sigma=(k_{\sigma(1)},\ldots,k_{\sigma(n)})$ is $\PP_\sigma$-admissible. 
Let $\B_\sigma=(1,v'_1,\ldots,v'_{n})$ be the basis obtained by $\B$ permuting its elements: $v'_i=v_{\sigma(i)}$ for every $i=1\ldots,n$. 

In general, the function space $\F_\PP(\OO)$ depends on the choice of $\B$. If we want to highlight this dependence, we write $\F_\PP^\B(\OO)$ instead of $\F_\PP(\OO)$.
We call \emph{equivalent} two function spaces $\F^\B_\PP(\OO)$ and $\F^{\B'}_{\PP'}(\OO)$  if $\B'=\B_\sigma$ and $\PP'=\PP_\sigma$ for some permutation $\sigma$.

\begin{theorem}\cite[Theorem 63]{BinosiPerotti2025}\label{teo:classification2}
Let $n=\dim M-1$ and let $\OO\subseteq M$ be open and $\zz_2^n$-invariant. Let $\B$ be a fixed hypercomplex basis.
Two spaces $\F_\PP^\B(\OO)$ and $\F_{\PP'}^{\B'}(\OO)$ of Dunkl-regular functions are equivalent (and isomorphic as real vector spaces) if and only if $\PP$ and $\PP'$ define the same partition of $n=\dim M-1$. 
\end{theorem}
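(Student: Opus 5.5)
The plan is to construct an explicit linear isomorphism induced by permuting the basis, and to treat the two directions of the ``if and only if'' separately.

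\emph{Sufficiency.} Suppose $\PP=\{A_1,\ldots,A_\ell\}$ and $\PP'=\{A'_1,\ldots,A'_\ell\}$ define the same partition of $n$. After reordering the blocks we may assume $|A_j|=|A'_j|$ for every $j$. We then choose a permutation $\sigma$ of $[n]$ with $\sigma^{-1}(A_j)=A'_j$, so that $\PP'=\PP_\sigma$. Set $\B_\sigma=(1,v_{\sigma(1)},\ldots,v_{\sigma(n)})$; it is again a hypercomplex basis, since its elements lie in $\s_M$ and pairwise anticommute. By definition $\F^\B_\PP(\OO)$ and $\F^{\B_\sigma}_{\PP_\sigma}(\OO)$ are then equivalent.

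It remains to check that they coincide as sets, which gives the isomorphism (the identity map). Write $y_i=x_{\sigma(i)}$ for the coordinates with respect to $\B_\sigma$; the point $x\in M$ is unchanged. For each block $B_j=\sigma^{-1}(A_j)$ we compare the operators with respect to $\B_\sigma$, taken with the permuted multiplicities ${\bf k}_\sigma$, against those with respect to $\B$:
\begin{itemize}
\item $T_{\B_\sigma,i}=T_{\B,\sigma(i)}$;
\item the reflection in $y_i$ is the reflection $r_{\sigma(i)}$;
\item $\DD_{B_j}^{\B_\sigma}=\sum_{i\in B_j}v_{\sigma(i)}T_{\B,\sigma(i)}=\DD_{A_j}^\B$;
\item similarly $\x_{B_j}=\x_{A_j}$, $\Eu_{B_j}=\Eu_{A_j}$ and $r_{B_j}=r_{A_j}$.
\end{itemize}
Hence $D_{\PP_\sigma}=D_\PP$ and $\mathscr S_{\PP_\sigma}=\mathscr S_\PP$. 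Since ${\bf k}_\sigma$ is $\PP_\sigma$-admissible, Theorem~\ref{teo:classification1} shows that the choice of multiplicities is irrelevant, and therefore the two spaces are literally equal.

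\emph{Necessity.} If the spaces are equivalent, then by definition $\PP'=\PP_\sigma$ for some $\sigma$. Since $\sigma$ is a bijection, the block sizes are preserved, so $\PP$ and $\PP'$ define the same integer partition.

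\emph{The main obstacle.} The remaining difficulty is the phrase ``isomorphic as real vector spaces''. Read as a statement about equivalence, the argument above is complete. If one wants to rule out abstract isomorphisms between spaces with different integer partitions, bare linear isomorphism type is useless, since all of these spaces are infinite-dimensional. I would instead compare graded dimensions. Using the Cauchy--Kovalevskaya bases of Proposition~\ref{pro:basis}, the dimension of the space of homogeneous $\PP$-Dunkl-regular polynomials of degree $m$ is a function depending only on the multiset $\{|A_j|\}$. I would then show that this function determines the multiset. The dimension of the sliced restriction to $x_0=0$ should factor as a product over the blocks, namely the number of ways to distribute $m$ among the blocks, with one axial factor per block of size greater than $1$, each of which contributes polynomially in $m$. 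Recovering the multiset from these degree counts is where I expect the real work to lie.
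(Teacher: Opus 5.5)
The paper does not prove this statement itself (it is quoted from \cite[Theorem 63]{BinosiPerotti2025}), and your argument for it is correct. Reading $\B'$ as a reordering $\B_\sigma$ of the fixed basis is necessary: for an unrelated hypercomplex basis the ``if'' direction would fail by the very definition of equivalence. With that reading, the following identities show that equivalent spaces literally coincide:
\begin{itemize}
\item $T_{\B_\sigma,i}=T_{\B,\sigma(i)}$ for the multiplicities ${\bf k}_\sigma$;
\item $\DD^{\B_\sigma}_{B_j}=\DD^{\B}_{A_j}$, $\x_{B_j}=\x_{A_j}$, $\Eu_{B_j}=\Eu_{A_j}$ and $r_{B_j}=r_{A_j}$;
\item $\sum_{i\in B_j}y_iT_{\B_\sigma,i}=\sum_{m\in A_j}x_mT_{\B,m}$, which is needed for $\widetilde{\underline S}_{B_j}$.
\end{itemize}
The converse is just the invariance of block sizes under a bijection. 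Said differently, for the fixed basis one has $\F^{\B}_{\PP_\sigma}(\OO)=\F^{\B_{\sigma^{-1}}}_{\PP}(\OO)$: the $\PP'$-space is the $\PP$-space of a reordered basis, which is what the classification up to equivalence expresses.

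You should, however, drop your last paragraph rather than leave it as an open step. The parenthetical only records that equivalent spaces are isomorphic (here even equal); it is not a separate criterion. As you say yourself, bare linear isomorphism cannot separate these infinite-dimensional spaces.

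Moreover, the invariant you propose would not work. By Proposition~\ref{pro:basis}, the homogeneous $\PP$-Dunkl-regular polynomials of degree $d$ form a real vector space of dimension $\binom{\ell+d-1}{d}\dim\Aa$, which depends only on $\ell=|\PP|$. There is no additional axial factor per block, because the $\PP$-slice condition leaves each block only the powers $\x_{A_j}^{d_j}$. For example, for $n=4$ the partitions $\{\{1,2,3\},\{4\}\}$ and $\{\{1,2\},\{3,4\}\}$ have the same graded dimensions $(d+1)\dim\Aa$, yet they define the different partitions $3+1$ and $2+2$ of $n$. So degree counts cannot recover the multiset of block sizes.
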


The number of non-equivalent functions spaces $\F_\PP(\OO)$ is equal to the partition number $p(n)$ (see Table \ref{table}). 
The spaces $\M(\OO)$ and $\sr(\OO)$ are the unique functions spaces $\F_\PP^\B(\OO)$ that are not equivalent to any other space $\F_{\PP'}^{\B'}(\OO)$. 

\begin{table}[h]
\begin{center}
\begin{tabular}{c|c|c|c|c|c|c|c|c|c}
\hline &&&&&&&&&\\[-10pt]
$n=\dim(M)-1$ &$1$&$2$&$3$&$3$&$3$&$4$&$5$&7&\dots \\
\hline &&&&&&&&&\\[-10pt]
$p(n)$ & $1$ & 2&3&3&3&5&7&15&\dots\\
\hline &&&&&&&&&\\[-10pt]
$q(n)$ & $1$ & 1&2&2&2&3&4&5&\dots\\
\hline &&&&&&&&&\\[-10pt]
$B_n$ & $1$ & $2$ &$5$&$5$&$5$&$15$&$52$&$877$&\dots\\
\hline &&&&&&&&&\\[-10pt]
$M$&$\cc$ &$\hh_r$ & $\hh$ &$\hh$&$\rr^4$&$\rr^5$ & $\rr^6$& $\oo$&\dots\\
\hline &&&&&&&&&\\[-10pt]
$\Aa$&$\cc$ &$\hh$ & $\hh$ &$\oo$&$\rr_3$&$\rr_4$ & $\rr_5$& $\oo$&\dots\\
\hline
\end{tabular}
\vskip5pt
\caption{Partition numbers and Bell numbers for some hypercomplex subspaces $M$. The function $q(n)$ counts the odd partitions of $n$ (see Remark \ref{rem:qn}).}
\label{table}
\end{center}
\end{table}

\subsection{$\PP$-slice functions}\label{sec:Pslice}

\begin{definition}\label{def:pslice}
Let $\PP=\{A_1,\ldots,A_\ell\}$ be a partition of the set $[n]$. 
Let $\OO=\OO_D$ be a $\PP$-circular open subset of $M$ (i.e., $\OO$ is $A_j$-circular for every $j=1,\ldots,\ell$).  A function $f\in C^1(\OO)$ is called \emph{$\PP$-slice} if it belongs to $\ker\mathscr S_\PP=\cap_{j=1}^\ell\ker\mathscr S_{A_j}$. 
\end{definition}

Observe that every $C^1$ function $f:\OO\to\Aa$ is $\PP$-slice when $\PP=\{\{1\},\ldots,\{n\}\}$.

The next proposition characterizes functions in the kernel of the differential-difference operator $\mathscr S_A$.

\begin{proposition}\cite[Proposition 46]{BinosiPerotti2025}\label{pro:SA}
Let $\emptyset\ne A\subseteq[n]$ and $A^c=[n]\setminus A=\{j_1,\ldots,j_{n-\ell}\}$ with $j_1<\cdots <j_{n-\ell}$. Let $\OO=\OO_E$ be an $A$-circular open subset of $M$, with $E=E'\times E''\subseteq\cc\times\rr^{n-\ell}$.
If $f\in C^1(\OO,\Aa)$ and $\mathscr S_A f=0$ on $\OO$, then there exist continuous functions 
$F_{\emptyset}^A,F_1^A:E\to\Aa$ such that for every $x\in\OO$ it holds
\[
f(x)=F_{\emptyset}^A(z,y)+JF_1^A(z,y).
\]
Here $z=x_0+i\beta\in E'$, $y\in E''$, $x=x_0+\beta J+\sum_{i=1}^{n-\ell}v_{j_i}y_i\in\OO$, where $J=\text{sign}(\beta)\tfrac{\x_A}{\|\x_A\|}=\beta^{-1}\x_A$ if $|\beta|=\|\x_A\|\ne0$, and $J$ is any element of $\s_{M_A}$ if $\beta=0=\x_A$. The functions $F_{\emptyset}^A,F_1^A$ are, respectively, even and odd w.r.t.\ $\beta$.
Conversely, if $f(x)=F_{\emptyset}(z,y)+JF_1(z,y)$ with $F_{\emptyset},F_1$ an even/odd pair w.r.t.\ $\beta$, then 
$\mathscr S_A f=0$ on $\OO$.
\end{proposition}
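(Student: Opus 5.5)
The plan is to reduce the statement to the case $A=[n]$, which is already covered by Theorem \ref{teo:C1_sliceness}. We freeze the variables that $\mathscr S_A$ does not involve, namely $x_0$ and $x_j$ for $j\in A^c$. Indeed, by Definition \ref{def:DiracDunklA} the operators $\SA$, $\SA'$, $\SA''$ are built only from $\x_A$, $\Eu_A$, $T_{\B,i}$ and $r_i$ with $i\in A$. So they act on $f$ only through the variables $x_i$, $i\in A$, with $x_0$ and $y=(x_{j_1},\dots,x_{j_{n-\ell}})$ entering as parameters.

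\emph{Reduction to $M_A$.} Fix $(x_0,y)$ and consider the map
\[
g_{x_0,y}:\ \x_A\mapsto f(x_0+\x_A+\x_{A^c}).
\]
Because $\OO=\OO_E$ is $A$-circular, the set of admissible $x_0+\x_A$ is an axially symmetric open subset of the hypercomplex subspace $M_A=\langle1,v_{i_1},\dots,v_{i_\ell}\rangle$. Moreover, $(1,v_{i_1},\dots,v_{i_\ell})$ is a hypercomplex basis of $M_A$. With respect to this basis, $\DD_A$, $\SA$, $\SA'$, $\SA''$ are exactly the operators $\DD_{\B}$, $\sC$, $\sC'$, $\sC''$ of $M_A$. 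The standing assumptions on the multiplicities ($k_i\le0$, at most one vanishing, $\sum_{i\in A}k_i=(1-|A|)/2$) are exactly the hypotheses of Theorem \ref{teo:C1_sliceness} with $n$ replaced by $|A|$.

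\emph{Forward direction.} If $|A|\ge2$, Theorem \ref{teo:C1_sliceness} implies that $\mathscr S_Af=0$ forces each $g_{x_0,y}$ to be a slice function on its axially symmetric domain in $M_A$. This gives the representation $f(x)=F_\emptyset^A(z,y)+JF_1^A(z,y)$ pointwise in $(x_0,y)$. If $|A|=1$, then $\mathscr S_{\{i\}}=0$, and every function of one imaginary variable is slice, since $\s_{M_A}=\{\pm v_i\}$. In both cases the components are given explicitly via \eqref{eq:spherical}:
\[
F_\emptyset^A=f^\circ_{s,A},\qquad
JF_1^A=\x_A^{-1}(\x_Af)^\circ_{s,A},\quad\text{i.e.}\quad
F_1^A=\operatorname{sign}(\beta)\,\|\x_A\|^{-1}(\x_Af)^\circ_{s,A}\ \text{up to the sign convention}.
\]
From these formulas, evenness of $F_\emptyset^A$ and oddness of $F_1^A$ in $\beta$ follow at once. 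The sign of $\beta$ flips $J$, so the pair is consistent with the choice $J=\beta^{-1}\x_A$. Because of this explicit form, the representation holds jointly in all variables, not just fiberwise.

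\emph{Continuity.} Continuity of $F^A_\emptyset$ in $(z,y)$ is clear from $f\in C^1$. For $F_1^A$ the only delicate points are those with $\x_A=0$. There we invoke Remark \ref{rem:sliceness_continuity_A}: the function $\|\x_A\|^{-1}(\x_Af)^\circ_{s,A}$ extends continuously by $0$. This is consistent with oddness in $\beta$, and it makes the representation valid at $\beta=0$ with an arbitrary $J$.

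\emph{Converse.} Suppose $f=F_\emptyset+JF_1$ with an even/odd pair in $\beta$. Then each $g_{x_0,y}$ is a slice function on $M_A$, induced by the stem function $F_\emptyset+iF_1$. Applying Theorem \ref{teo:C1_sliceness} in the other direction, we get $\SA f=\SA'f=\SA''f=0$ fiberwise, hence on all of $\OO$; when $|A|=1$ this is trivial.

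The main obstacle I expect is making the fiberwise application of Theorem \ref{teo:C1_sliceness} legitimate. One must check that the restricted operators really coincide with those of $M_A$; in particular, $\Eu_A$ and the reflections must not see the frozen coordinates, and the identity \eqref{eq:SA} must hold. One must also handle the continuity at $\x_A=0$ uniformly in the parameters $(x_0,y)$.
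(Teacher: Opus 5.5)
The paper does not reprove this statement (it is imported from \cite[Proposition 46]{BinosiPerotti2025}). Your argument is correct and is the natural route, consistent with how the paper uses Remarks \ref{rem:sliceness_continuity_A} and \ref{rem:even-odd}: freeze $\x_{A^c}$, apply Theorem \ref{teo:C1_sliceness} on $M_A$ in both directions when $|A|\ge2$ (the case $|A|=1$ is trivial), and read off $F^A_\emptyset=f^\circ_{s,A}$ and $F^A_1=-\beta^{-1}(\x_Af)^\circ_{s,A}$, which is the correct sign (cf.\ Remark \ref{rem:even-odd}).

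Two small points. You should freeze only $y$ and let $x_0$ vary, since Theorem \ref{teo:C1_sliceness} needs an open axially symmetric subset of $M_A$. Also, your continuity worry at $\x_A=0$ disappears if you fix $J_0\in\s_{M_A}$ and write $F^A_1(z,y)=-\tfrac12J_0\big(f(x_0+\beta J_0+\x_{A^c})-f(x_0-\beta J_0+\x_{A^c})\big)$, which is manifestly continuous on $E$ and odd in $\beta$.
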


\begin{remark}\label{rem:even-odd}
If $f(x)=F_{\emptyset}^A(z,y)+JF_1^A(z,y)$ with $F_{\emptyset}^A,F_1^A$ as in the Proposition, 
then $f^\circ_{s,A}(x)=F_{\emptyset}^A(z,y)$ and $(\x_Af)^\circ_{s,A}(x)=-\beta F_1^A(z,y)$. 
In particular, $f^\circ_{s,A}$ and ${(\x_A f)}^\circ_{s,A}$ are constant on $\s_x\cap M_A$ for every $x\in\OO\cap M_A$. Moreover, this shows that $F_{\emptyset}^A$ and $F_1^A$ are uniquely determined by $f$.
\end{remark}

Let $\B=\{1,v_1,\ldots,v_n\}$ be a fixed hypercomplex basis of $M$, with corresponding real coordinated $x_0,x_1,\ldots,x_n$.  
For any $i=1,\ldots,n$, define the operators
\begin{equation}\label{eq:delta12}
\delta_1^{i}:=\frac{1-r_i}{x_i},\quad \delta_2^{i}:=\frac{1-r_i}{x_i^2}-\frac2{x_i}\partial_{x_i}.
\end{equation}

\begin{proposition}\label{pro:delta12}
Let $\OO\subseteq M$ be open and $\zz_2^n$-invariant. For every $i=1,\ldots,n$, it holds $T_{\B,i}=\partial_{x_i}+k_i\delta_1^i$ and $x_i\delta_2^i=\delta_1^i-2\partial_{x_i}$. Moreover, it holds
\[
\delta_2^i\delta_1^i+[\partial_{x_i},\delta_2^i]=0.
\]
\end{proposition}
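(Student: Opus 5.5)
The first two identities follow directly from the definitions in \eqref{eq:delta12}, so I would dispose of them first and then spend the effort on the commutator identity. By Definition \ref{def:DiracDunkl}, $T_{\B,i}f=\partial_{x_i}f+\frac{k_i}{x_i}(f-r_if)=\partial_{x_i}f+k_i\delta_1^if$, which gives $T_{\B,i}=\partial_{x_i}+k_i\delta_1^i$. Multiplying $\delta_2^i$ on the left by $x_i$ gives $x_i\delta_2^i=\frac{1-r_i}{x_i}-2\partial_{x_i}=\delta_1^i-2\partial_{x_i}$. All operators here act only on the variable $x_i$, with the other coordinates held fixed. The set $\OO$ is $\zz_2^n$-invariant, so $r_i$ acts on functions on $\OO$. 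The quotients by $x_i$ are first understood on $\OO\setminus\{x_i=0\}$ and, for $C^\infty$ (or sufficiently smooth) $f$, extend by continuity because $(1-r_i)f$ is odd in $x_i$.

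For the identity $\delta_2^i\delta_1^i+[\partial_{x_i},\delta_2^i]=0$, I would reduce to one variable, write $x=x_i$, $r=r_i$, $\partial=\partial_{x_i}$, and split $f=f_e+f_o$ into its even and odd parts in $x$. The only rules needed are $r\partial=-\partial r$ and $r\,x^{-1}=-x^{-1}r$ (so $r\,x^{-2}=x^{-2}r$). On odd $g$ we have $(1-r)g=2g$; on even $g$ we have $(1-r)g=0$. Hence $\delta_1 f=2f_o/x$, which is even, and $\delta_2 g=-\frac2x g'$ for $g$ even, while $\delta_2 g=\frac{2g}{x^2}-\frac2x g'$ for $g$ odd.

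Next I would compute both sides on each parity.

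\emph{Even part $f=f_e$.} Here $\delta_1 f=0$, so $\delta_2\delta_1 f=0$. Also $\delta_2 f=-\frac2x f'$. Then $\partial\delta_2 f=\frac{2}{x^2}f'-\frac2x f''$. Since $f'$ is odd, $\delta_2\partial f=\frac{2f'}{x^2}-\frac2x f''$. The commutator therefore vanishes, and the sum is $0$.

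\emph{Odd part $f=f_o$.} Write $h=2f/x$, which is even, so $\delta_2\delta_1 f=-\frac2x h'$ with $h'=\frac{2f'}{x}-\frac{2f}{x^2}$. This gives
\[
\delta_2\delta_1 f=-\frac{4f'}{x^2}+\frac{4f}{x^3}.
\]
For the commutator, $\partial\delta_2 f=\partial\bigl(\frac{2f}{x^2}-\frac{2f'}{x}\bigr)=-\frac{4f}{x^3}+\frac{2f'}{x^2}+\frac{2f'}{x^2}-\frac{2f''}{x}$. Since $f'$ is even, $\delta_2\partial f=-\frac2x f''$. Hence
\[
[\partial,\delta_2]f=-\frac{4f}{x^3}+\frac{4f'}{x^2},
\]
which cancels $\delta_2\delta_1 f$ exactly.

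Adding the even and odd cases by linearity proves the identity on $\OO\setminus\{x_i=0\}$, and it extends to $\OO$ by continuity. The only real obstacle is bookkeeping: applying the parity rules consistently, in particular that $\partial$ swaps parity and that $\delta_1$ maps odd functions to even ones.
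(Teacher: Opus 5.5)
Your proof is correct and is essentially the paper's argument: a direct verification of the identity on $\{x_i\neq0\}$. You organize the bookkeeping by splitting $f$ into its even and odd parts in $x_i$, whereas the paper works with $f$ and $r_if$ directly, using that $\delta_1^if$ is $r_i$-invariant together with the anticommutation $\{\partial_{x_i},r_i\}=0$. Your closing claim that the identity extends to $\{x_i=0\}$ by continuity goes beyond what the paper proves and needs more than $C^2$ regularity, but the proposition does not need it.
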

\begin{proof}
The first statement is immediate from definitions. We prove the second one. Let $f\in C^2(\OO,\Aa)$. Then, for $x_i\ne0$, it holds
\[
\delta_2^i\delta_1^if=\delta_2\left(\frac{f-r_if}{x_i}\right)=-\frac2{x_i}\partial_{x_i}\left(\frac{f-r_if}{x_i}\right)=2\frac{f-r_if}{x_i^3}-\frac2{x_i^2}\left(\partial_{x_i}f+r_i\partial_{x_i}f\right),
\]
since $\{\partial_{x_i},r_i\}=0$. On the other hand,
\begin{align*}
[\partial_{x_i},\delta_2^i]f&=\partial_{x_i}\left(\frac{f-r_if}{x_i^2}\right)+\frac2{x_i^2}\partial_{x_i}f-\frac2{x_i}\partial_{x_i}^2f-\frac{\partial_{x_i}f-r_i\partial_{x_i}f}{x_i^2}+\frac2{x_i}\partial_{x_i}^2f\\
&=-2\frac{f-r_if}{x_i^3}+\frac2{x_i^2}\partial_{x_i}f+\frac2{x_i^2}r_i\partial_{x_i}f=-\delta_2^i\delta_1^if.
\end{align*}
\end{proof}

\begin{proposition}\label{pro:delta1_2slice}
Let $A\subseteq[n]$, $\OO\subseteq M$ be open and $A$-circular. Assume that $\mathscr S_A f=0$ on $\OO$ and let $i\in A$. 
Then the functions $v_i\delta_1^if$ are of class $C^1$ on $\OO'=\{x\in\OO\,|\, \x_A\ne0\}$, are independent of $i\in A$ and belong to $\ker\mathscr S_A$. 
Moreover, the $A$-spherical value $f^\circ_{s,A}$ and the function $\x_A f$ belong to $\ker\mathscr S_A$. 
If $f\in C^2(\OO,\Aa)$, then the functions $\delta_2^if$ are of class $C^1$ on $\OO'$, are independent of $i\in A$ and belong to $\ker\mathscr S_A$. Moreover, also the functions $\dM f$ and $\partial_M f$ belong to $\ker\mathscr S_A$.
\end{proposition}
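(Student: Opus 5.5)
The whole argument will run through the local representation of Proposition \ref{pro:SA}. On the $A$-circular set $\OO$, write $\OO=\OO_E$ and use $\mathscr S_Af=0$ to get
\[
f(x)=F_\emptyset(z,y)+JF_1(z,y),
\]
with $F_\emptyset$ even and $F_1$ odd in $\beta$. Here $\beta J=\x_A$ and $|\beta|=\|\x_A\|$. By the converse part of Proposition \ref{pro:SA}, a function lies in $\ker\mathscr S_A$ as soon as it has this form with an even/odd pair. So each claim reduces to exhibiting the relevant function in that form on $\OO'$ (or on $\OO$), with the required regularity.

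First I will handle $\delta_1^i$, for $i\in A$. The reflection $r_i$ fixes $x_0$, $y$ and $\beta$, and maps $\x_A=\beta J$ to $\beta J'$, where $J'=J-2\beta^{-1}x_iv_i$. Hence
\[
f-r_if=(J-J')F_1=2\beta^{-1}x_iv_iF_1,
\]
so $\delta_1^if=2\beta^{-1}v_iF_1$. By \eqref{eq:anticommute} (with $a=F_1$) and $v_i^2=-1$, this gives $v_i\delta_1^if=-2\beta^{-1}F_1(z,y)$. This expression is independent of $i$. It is even in $\beta$, with zero odd part, and it is $C^1$ wherever $\beta\ne0$, because there $F_1=-\beta^{-1}(\x_Af)^\circ_{s,A}$ by Remark \ref{rem:even-odd}, and this is $C^1$ in $x$. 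Therefore $v_i\delta_1^if\in\ker\mathscr S_A$.

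Next come $f^\circ_{s,A}$ and $\x_Af$. We have $f^\circ_{s,A}=F_\emptyset$, an even/zero pair. For the product, $\x_Af=\beta J(F_\emptyset+JF_1)=-\beta F_1+J(\beta F_\emptyset)$, where I use the alternative (Artin) identity $J(Ja)=(JJ)a=-a$. This pair is again even/odd.

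For the $C^2$ statements, $\delta_2^if=\delta_1^if/x_i-\frac2{x_i}\partial_{x_i}f$. I will compute $\partial_{x_i}f$ by the chain rule, using $\partial_{x_i}\beta=x_i/\beta$ (taking $\beta=\|\x_A\|$ on $\OO'$) and $\partial_{x_i}J=\beta^{-1}v_i-x_i\beta^{-2}J$. This yields
\[
\partial_{x_i}f=\tfrac{x_i}{\beta}\bigl(\partial_\beta F_\emptyset+J\partial_\beta F_1\bigr)+\beta^{-1}v_iF_1-\tfrac{x_i}{\beta^2}JF_1 .
\]
The term $\beta^{-1}v_iF_1$ is exactly $\tfrac12\delta_1^if$, so in $\delta_2^if$ these two pieces cancel. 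What remains is
\[
\delta_2^if=-\tfrac2\beta\partial_\beta F_\emptyset+J\Bigl(-\tfrac2\beta\partial_\beta F_1+\tfrac2{\beta^2}F_1\Bigr).
\]
This is independent of $i$ and is an even/odd pair. Regularity on $\OO'$ follows because everything is smooth away from $\beta=0$ in terms of $C^2$ data.

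Finally, for $\dM f$ and $\partial_Mf$, I will split the operator as $\partial_{x_0}+\sum_{i\in A}v_i\partial_{x_i}+\sum_{j\notin A}v_j\partial_{x_j}$. The derivatives $\partial_{x_0}$ and $\partial_{x_j}$ for $j\notin A$ act only on $(z,y)$ and preserve the even/odd structure. The delicate point is left multiplication by $v_j$ with $j\notin A$: $v_j(JG)=-J(v_jG)$ holds by \eqref{eq:anticommute} extended linearly, since $J\in\Span(v_i:i\in A)$. So $v_j(G_\emptyset+JG_1)=v_jG_\emptyset+J(-v_jG_1)$, which preserves the form. The $A$-part is the slice Cauchy–Riemann operator on $M_A$; using Theorem \ref{teo:difference} on $M_A$ and the vanishing of $\Gamma_{M_A}$ on $A$-slice functions, this part equals $\partial_{x_0}$-free $\difbar$-type terms $-\x_A^{-1}\Eu_A f$. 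With $\Eu_A=\beta\partial_\beta$ this becomes
\[
J\partial_\beta F_\emptyset-\partial_\beta F_1+\beta^{-1}(\ldots),
\]
which is checked to be even/odd; the $-(|A|-1)\beta^{-1}F_1$-type term coming from $\sum_i v_i\partial_{x_i}J$ is even. The sign-flipped sum handles $\partial_M$.

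The main obstacle I expect is regularity and extension across $\x_A=0$ for $\dM f$ and $\partial_Mf$. The formulas involve $\beta^{-1}F_1$, so I will argue on $\OO'$ and extend by continuity. Since $\mathscr S_A$ is a continuous-coefficient condition once $f$ is $C^1$ there, and $\ker\mathscr S_A$ is closed under such limits via the Proposition \ref{pro:SA} characterization, the conclusion will hold on all of $\OO$.
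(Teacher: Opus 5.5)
Your route is the paper's: represent $f=F_\emptyset+JF_1$ via Proposition \ref{pro:SA}, then exhibit each function as an even/odd pair. Your computations for $v_i\delta_1^if$, $f^\circ_{s,A}$, $\x_Af$, $\delta_2^if$ and the $A^c$-part of $\dM f$ match the paper; for $\x_Af$, your expansion $-\beta F_1+J(\beta F_\emptyset)$ is the right one.

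The step that fails as written is the $A$-part $\sum_{i\in A}v_i\partial_{x_i}f$. You justify it by asserting that $\Gamma_{M_A}$ vanishes on $A$-slice functions, so that this part equals $-\x_A^{-1}\Eu_Af$. That assertion is false: already $\Gamma_M(x)=(n-1)\x$. Perhaps you have in mind the spherical Dunkl-Dirac operator $\tGamma$, which does annihilate slice functions; but $\Gamma_{M_A}$ has zero multiplicities.

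Since $\Eu_AJ=0$ and $\Eu_A\beta=\beta$, one gets exactly $-\x_A^{-1}\Eu_Af=J\partial_\beta F_\emptyset-\partial_\beta F_1$, with no $\beta^{-1}$ term. So the $-(|A|-1)\beta^{-1}F_1$ term you add in the next sentence contradicts your own claim. That term is precisely $-\x_A^{-1}\Gamma_{M_A}f$, i.e.\ $\Gamma_{M_A}f=(|A|-1)\x_Af'_{s,A}\neq0$.

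The repair is what the paper does, and you already have every ingredient. Multiply your chain-rule formula for $\partial_{x_i}f$ by $v_i$ and sum over $i\in A$. Use $\sum_{i\in A}x_iv_i=\beta J$, together with $v_i(v_ia)=-a$ and $J(Ja)=-a$ (both by alternativity). This gives
\[
\sum_{i\in A}v_i\partial_{x_i}f=\bigl((1-|A|)\beta^{-1}F_1-\partial_\beta F_1\bigr)+J\,\partial_\beta F_\emptyset,
\]
which is an even/odd pair; no appeal to Theorem \ref{teo:difference} is needed.

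Two minor remarks:
\begin{itemize}
\item $v_i(v_iF_1)=-F_1$ follows from alternativity, not from \eqref{eq:anticommute}, which concerns $i\ne j$.
\item Extending $\mathscr S_A(\dM f)=0$ from $\OO'$ to $\OO$ requires nothing from Proposition \ref{pro:SA}. Since $\dM f\in C^1(\OO,\Aa)$, the function $\mathscr S_A(\dM f)$ is continuous on $\OO$ and vanishes on the dense set $\OO'$.
\end{itemize}
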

\begin{proof}
Let $\OO=\OO_E$, with $E=E'\times E''\subseteq\cc\times\rr^{n-\ell}$, $\ell=|A|$, $A^c=[n]\setminus A=\{j_1,\ldots,j_{n-\ell}\}$ with $j_1<\cdots <j_{n-\ell}$. Since $\mathscr S_Af=0$, Proposition \ref{pro:SA} provides continuous functions 
$F_{\emptyset}^A,F_1^A:E\to\Aa$ such that for every $x\in\OO'$ it holds
\[
f(x)=F_{\emptyset}^A(z,y)+JF_1^A(z,y),
\]
where $z=x_0+i\beta\in E'\setminus\rr$, $y=(y_1,\ldots,y_{n-\ell})\in E''$, $x=x_0+\beta J+\sum_{i=1}^{n-\ell}v_{j_i}y_i\in\OO$, with $J=\beta^{-1}\x_A$ and the functions $F_{\emptyset}^A,F_1^A$ are, respectively, even and odd w.r.t.\ $\beta$. If $i\in A$, it holds
\[
v_i\delta_1^i f(x)=\frac{v_i}{x_i}\left(F_{\emptyset}^A(z,y)+JF_1^A(z,y)-\big(F_{\emptyset}^A(z,y)+J'F_1^A(z,y)\big)\right)
=\frac{v_i}{x_i}(J-J')F_1^A(z,y)).
\]
where $J'=r_i(J)$. Therefore $J-J'=2x_iv_i/\beta$ and $v_i\delta_1^i f(x)=-2F_1^A(z,y)/\beta$, independent of $i\in A$.  
Since $F_1^A(z,y)=-\beta^{-1}(\x_Af)^\circ_{s,A}(x)$, the functions $v_i\delta_1^i f$ are of class $C^1$ on $\OO'$. Since $F_1^A(z,y)/\beta$ is even w.r.t.\ $\beta$, the converse part of Proposition \ref{pro:SA} shows that $v_i\delta_1^i f$  belongs to $\ker\mathscr S_A$. 
The equalities
\[
f^\circ_{s,A}=\tfrac12(f(x)+f(\bar x^A))=\tfrac12\big((F_{\emptyset}^A(z,y)+JF_1^A(z,y))+(F_{\emptyset}^A(z,y)-JF_1^A(z,y)\big)=F_{\emptyset}^A(z,y)
\]
and
\begin{align*}
\x_A f(x)&=(x_0+\beta J)(F_{\emptyset}^A(z,y)+JF_1^A(z,y))\\
&=((x_0F_{\emptyset}^A(z,y)-\beta F_1^A(z,y))+J(\beta F_{\emptyset}^A(z,y)+x_0F_1^A(z,y))
\end{align*}
and Proposition \ref{pro:SA} show that $f^\circ_{s,A}$ and $\x_A f(x)$ are in $\ker\mathscr S_A$. 
From the equalities  $J=\beta^{-1}\x_A$ and $\beta^2=\|\x_A\|^2$, we get that
\begin{equation}\label{eq:dfi}
\partial_{x_i}f=\frac{x_i}\beta\partial_\beta F^A_\emptyset(z,y)+\left(\frac{v_i}\beta-J\frac{x_i}{\beta^2}\right)F^A_1(z,y)+J\frac{x_i}\beta\partial_\beta F^A_1(z,y)
\end{equation}
for all $i\in A$. This yields 
\begin{align*}
\delta_2^i f&=\frac1{x_i^2}\frac{2x_iv_i}{\beta}F^A_1(z,y)-\frac2{x_i}\left(\frac{x_i}\beta\partial_\beta F^A_\emptyset(z,y)+\left(\frac{v_i}\beta-J\frac{x_i}{\beta^2}\right)F^A_1(z,y)+J\frac{x_i}\beta\partial_\beta F^A_1(z,y)\right)\\
&=
-2\beta^{-1}\partial_\beta F^A_\emptyset(z,y)+J\left(2\beta^{-2}F^A_1(z,y) -2\beta^{-1}\partial_\beta F^A_1(z,y)\right)=:G_{\emptyset}^A(z,y)+JG_1^A(z,y),
\end{align*}
with $G^A_\emptyset$ and $G^A_1$ respectively even and odd w.r.t.\ $\beta$ and independent of $i$. Proposition \ref{pro:SA} shows that $\delta_2^i f$  belongs to $\ker\mathscr S_A$. We now prove the last statement. Since
\[
\dM f=\sum_{i\in A}v_i\partial_{x_i}f+\big(\partial_{x_0}f+\sum_{j\in A^c}v_j\partial_{x_j}f\big),
\]
it is sufficient to prove that $f_1:=\sum_{i\in A}v_i\partial_{x_i}f$ and $f_2:=\partial_{x_0}f+\sum_{j\in A^c}v_j\partial_{x_j}f$ are in $\ker\mathscr S_A$. From \eqref{eq:dfi}, \eqref{eq:anticommute} and the equality $\x_A=\beta J$ we get
\begin{align*}
f_1(x)&=\sum_{i\in A}\left(v_ix_i\beta^{-1}\partial_\beta F^A_\emptyset(z,y)-\beta^{-1}F^A_1(z,y)-v_ix_i\left(
J\left(\beta^{-2}F^A_1(z,y)-\beta^{-1}\partial_\beta F^A_1(z,y)\right)\right)\right)\\
&=\x_A\beta^{-1}\partial_\beta F^A_\emptyset(z,y)-|A|\beta^{-1}F^A_1(z,y)-\x_A\left(
J\left(\beta^{-2}F^A_1(z,y)-\beta^{-1}\partial_\beta F^A_1(z,y)\right)\right)\\
&=J\partial_\beta F^A_\emptyset(z,y)-|A|\beta^{-1}F^A_1(z,y)+\left(\beta^{-1}F^A_1(z,y)-\partial_\beta F^A_1(z,y)\right)\\
&=\left((1-|A|)\beta^{-1}F^A_1(z,y)-\partial_\beta F^A_1(z,y)\right)+J\,\partial_\beta F^A_\emptyset(z,y)=:H^A_\emptyset(z,y)+JH^A_1(z,y),
\end{align*}
with $H^A_\emptyset$, $H^A_1$ an even-odd pair w.r.t.\ $\beta$, proving that $f_1\in\ker\mathscr S_A$. To conclude, we observe that also $f_2\in\ker\mathscr S_A$, since it can be written, for $x\in\OO'$, as
\begin{align*}
f_2(x)&=\partial_{x_0}F_{\emptyset}^A(z,y)+J\partial_{x_0}F_1^A(z,y)+\sum_{j\in A^c}v_j(\partial_{x_j}F_{\emptyset}^A(z,y)+J\partial_{x_j}F_1^A(z,y))\\
&=\big(\partial_{x_0}F_{\emptyset}^A(z,y)+\sum_{j\in A^c}v_j \partial_{x_j}F_{\emptyset}^A(z,y)\big)+J\big(\partial_{x_0}F_1^A(z,y)-\sum_{j\in A^c}v_j\partial_{x_j}F_1^A(z,y)\big),
\end{align*}
where we used the equality $v_j(Ja)=-J(v_ja)$, valid for every $j\in A^c$, $a\in\Aa$, a consequence of \eqref{eq:anticommute}. The same argument proves that $\partial_M f=2\partial_{x_0}f-\dM f\in\ker\mathscr S_A$. 
\end{proof}

The function, independent of $i\in A$,  
\[
f'_{s,A}:=-\frac12v_i\delta_1^if=-v_{i}\frac{f-r_{i}f}{2x_{i}}=(x-\bar x^i)^{-1}(f-r_if),
\]
where $\bar x^i=r_i(x)$, is called \emph{$A$-spherical derivative} of $f\in\ker\mathscr S_A$. The name is justified by the following property: if $f$ is a slice function, then $A=[n]$ and $f'_{s,A}$ is the usual spherical derivative of a slice function. In the notation of the proof above, it holds $f'_{s,A}(x)=-2^{-1}v_i\delta_1^if(x)=\beta^{-1}F^A_1(z,y)=-\beta^{-2}(\x_A f)^\circ_{s,A}(x)=-\|\x_A\|^{-2}(\x_A f)^\circ_{s,A}(x)$. Since $\x_A^{-1}=-\|\x_A\|^{-2}\x_A$, for every $f\in\ker\mathscr S_A$ it holds, where $\x_A\ne0$ (see Remark \ref{rem:sliceness_continuity_A}), 
\begin{equation}\label{eq:fsA}
f=f^\circ_{s,A}+\x_A f'_{s,A}.
\end{equation}
The previous Proposition says that if $f\in\ker\mathscr S_A$, then also $f'_{s,A}$ belongs to $\ker\mathscr S_A$. 

\begin{corollary}\label{cor:delta1_2Pslice}
Let $\PP=\{A_1,\ldots,A_\ell\}$ and let ${\bf k}=(k_1,\ldots,k_n)$ be a sequence of $\PP$-admissible Dunkl multiplicities.  Let $\OO\subseteq M$ be open and $\PP$-circular. 
If $f\in C^2(\OO,\Aa)$ and $\mathscr S_\PP f=0$, then for any $j=1,\ldots,\ell$, the $A_j$-spherical derivative $f'_{s,A_j}=-\frac12v_{\alpha_j}\delta_1^{\alpha_j}f$ and the function $\delta_2^{\alpha_j}f$
are $\PP$-slice functions of class $C^1$ on $\OO'=\bigcap_{j=1}^\ell\{x\in\OO\,|\, \x_{A_j}\ne0\}$, independent of the choice of $\alpha_j\in A_j$. 
\end{corollary}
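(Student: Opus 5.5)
The plan is to reduce the corollary to Proposition \ref{pro:delta1_2slice}, applied to each block $A_j$ separately, and then check that the resulting functions stay in the kernels of the other operators $\mathscr S_{A_m}$, $m\ne j$. First, since $\OO$ is $\PP$-circular it is $A_j$-circular for every $j$, and $\mathscr S_\PP f=0$ gives $\mathscr S_{A_j}f=0$. Proposition \ref{pro:delta1_2slice} with $A=A_j$ then says the following about $f'_{s,A_j}=-\frac12 v_{\alpha_j}\delta_1^{\alpha_j}f$ and $\delta_2^{\alpha_j}f$:
\begin{itemize}
\item they are of class $C^1$ on $\{\x_{A_j}\ne0\}\supseteq\OO'$;
\item they do not depend on the choice of $\alpha_j\in A_j$;
\item they lie in $\ker\mathscr S_{A_j}$.
\end{itemize}

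The remaining point is to show $\mathscr S_{A_m}$-invariance for $m\ne j$. Write $f=F^{A_m}_\emptyset(z,y)+J_mF^{A_m}_1(z,y)$ as in Proposition \ref{pro:SA}, with $J_m=\beta_m^{-1}\x_{A_m}$. Here the $A_j$-variables $x_i$, $i\in A_j$, are among the "transversal" coordinates $y$, because $A_j\subseteq A_m^c$. The operators $\delta_1^{i}$ and $\delta_2^{i}$ for $i\in A_j$ act only on the variable $x_i$, through $r_i$, $1/x_i$ and $\partial_{x_i}$. Acting on the $y$-variables, they commute with the structure $F_\emptyset+J_mF_1$, and they preserve evenness and oddness in $\beta_m$, since $r_i$ fixes $\x_{A_m}$.

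Thus $\delta_2^{i}f=\delta_2^{i}F^{A_m}_\emptyset+J_m\,\delta_2^{i}F^{A_m}_1$, again an even/odd pair. The converse part of Proposition \ref{pro:SA} then gives $\delta_2^if\in\ker\mathscr S_{A_m}$. For $\delta_1^if$ the same works. The extra left factor $v_i$, with $i\notin A_m$, anticommutes with $J_m$ by \eqref{eq:anticommute}, so
\[
v_i\big(\delta_1^iF_\emptyset+J_m\delta_1^iF_1\big)=v_i\delta_1^iF_\emptyset+J_m\big(-v_i\delta_1^iF_1\big),
\]
still an even/odd pair in $\beta_m$. This is the same trick used for $f_2$ in the proof of Proposition \ref{pro:delta1_2slice}. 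Hence $f'_{s,A_j}$ and $\delta_2^{\alpha_j}f$ belong to $\ker\mathscr S_{A_m}$ for all $m$, i.e.\ they are $\PP$-slice on $\OO'$.

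The main obstacle is the regularity needed to apply Proposition \ref{pro:SA} on $\OO'$ rather than on all of $\OO$. One must check that this proposition, and its converse, localize to the open $\PP$-circular set $\OO'$. One must also check that $\delta_1^if$ and $\delta_2^if$ make sense at points with $x_i=0$ but $\x_{A_j}\ne0$. The second point is settled by the explicit formulas $-2\beta_j^{-1}F^{A_j}_1$ and $G^{A_j}_\emptyset+J_jG^{A_j}_1$ from the proof of Proposition \ref{pro:delta1_2slice}, which extend continuously in $C^1$.
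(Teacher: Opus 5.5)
Your proof is correct, but for the blocks $A_m$ with $m\ne j$ it takes a different route from the paper. The first step is the same in both: Proposition \ref{pro:delta1_2slice} is applied to $A_j$.

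\textbf{The paper's route.} The paper never uses the even/odd representation for the other blocks. It checks the operator identities $[\mathscr S_{A_m},\delta_1^{\alpha_j}]=[\mathscr S_{A_m},\delta_2^{\alpha_j}]=0$ for $m\ne j$ directly. This works because $\x_{A_m}$, $\DD_{A_m}$, $\Eu_{A_m}$ and $r_{A_m}$ involve only the coordinates indexed by $A_m$, while $\delta_1^{\alpha_j}$ and $\delta_2^{\alpha_j}$ involve only $x_{\alpha_j}$. It then uses $\{\DD_{A_m},v_{\alpha_j}\}=0=\{\x_{A_m},v_{\alpha_j}\}$ to see that left multiplication by $v_{\alpha_j}$ preserves $\ker\mathscr S_{A_m}$.

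\textbf{Your route.} You do the same thing at the level of functions. You write $f=F^{A_m}_\emptyset+J_mF^{A_m}_1$ and observe that the $\delta$'s act only on transversal variables. You then use that $v_i$ anticommutes with $J_m$ by \eqref{eq:anticommute}, which is the trick used for $f_2$ in Proposition \ref{pro:delta1_2slice}. Finally you conclude by the converse of Proposition \ref{pro:SA} on the $A_m$-circular set $\OO'$.

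\textbf{What each buys.} The paper's argument is shorter and purely algebraic, and it needs neither direction of Proposition \ref{pro:SA} for the blocks $A_m$. Yours makes the geometric reason explicit: operations in coordinates outside $A_m$ leave the $A_m$-slice structure untouched.

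\textbf{Two points to add.} First, Proposition \ref{pro:SA} only gives continuous $F^{A_m}_\emptyset,F^{A_m}_1$. To apply $\partial_{x_i}$ to them, note that by Remark \ref{rem:even-odd} they equal $f^\circ_{s,A_m}$ and $-\beta_m^{-1}(\x_{A_m}f)^\circ_{s,A_m}$. These are $C^2$ wherever $\x_{A_m}\ne0$, since $f\in C^2$. Second, you already flag points with $x_i=0$ but $\x_{A_j}\ne0$. There it suffices to work locally on the open sets $\{x\in\OO'\,|\,x_{i'}\ne0\}$, $i'\in A_j$, which cover $\OO'$. For $i'\notin A_m$ these sets are $A_m$-circular, and the functions do not depend on the choice of $\alpha_j\in A_j$. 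Alternatively, pass to the limit by continuity. The paper's commutation argument also needs this step, and leaves it implicit.
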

\begin{proof}
From Proposition \ref{pro:delta1_2slice}, we have $\mathscr S_{A_j}(f'_{s,A_j})=0=\mathscr S_{A_j}(\delta_2^{\alpha_j}f)$ on $\OO'$. 
For every $i\ne j$, it is easy to verify that the commutation relations 
\[
\big[\underline S_{A_i},\delta_1^{\alpha_j}\big]=\big[\x_{A_i}\DD_{A_i}+\Eu_{A_i},\delta_1^{\alpha_j}\big]=0,\quad \big[\widetilde{\underline S}'_{A_i},\delta_1^{\alpha_j}\big]=\big[\widetilde{\underline S}''_{A_i},\delta_1^{\alpha_j}\big]=0
\]
and
\[
\big[\underline S_{A_i},\delta_2^{\alpha_j}\big]=\big[\x_{A_i}\DD_{A_i}+\Eu_{A_i},\delta_2^{\alpha_j}\big]=0,\quad \big[\widetilde{\underline S}'_{A_i},\delta_2^{\alpha_j}\big]=\big[\widetilde{\underline S}''_{A_i},\delta_2^{\alpha_j}\big]=0
\]
hold, and then $[\mathscr S_{A_i},\delta_1^{\alpha_j}]=[\mathscr S_{A_i},\delta_2^{\alpha_j}]=0$. This implies that $\mathscr S_\PP(\delta_2^{\alpha_j}f)=0$. 
Since $\{\DD_{A_i},v_{\alpha_j}\}=0=\{\x_{A_i},v_{\alpha_j}\}$ when $i\ne j$, we deduce also that $
\big[\mathscr S_{A_i},v_{\alpha_j}\delta_1^{\alpha_j}\big]=0$  for $i\ne j$. Therefore $\mathscr S_\PP(f'_{s,A_j})=0$ on $\OO'$ for every $j=1,\ldots,\ell$. 
\end{proof}

\begin{proposition}\label{pro:DM}
Let $\PP=\{A_1,\ldots,A_\ell\}$ and let ${\bf k}=(k_1,\ldots,k_n)$ be a sequence of $\PP$-admissible Dunkl multiplicities. Let $\OO\subseteq M$ be open and $\PP$-circular and $f\in C^1(\OO,\Aa)$. Then it holds:
\begin{itemize}
  \item[(i)]
  If $f$ is $\PP$-slice, i.e., $\mathscr S_\PP(f)=0$ on $\OO$, then 
\begin{equation}\label{eq:dp}
D_\PP f=\dM f+\sum_{j=1}^\ell(|A_j|-1)f'_{s,A_j}.
\end{equation}
\item[(ii)]
  If $f\in\F_\PP(\OO)$, then 
\begin{equation}\label{eq:dm}
\dM f=
\sum_{j=1}^\ell(1-|A_j|)f'_{s,A_j}.
\end{equation}
\end{itemize}
\end{proposition}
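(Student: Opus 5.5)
The plan is to begin from the explicit form \eqref{eq:DP},
\[
D_\PP f=\dM f+\sum_{i=1}^n k_iv_i\delta_1^i f,
\]
and group the sum according to the blocks of $\PP$:
\[
\sum_{i=1}^n k_iv_i\delta_1^i f=\sum_{j=1}^\ell\sum_{i\in A_j}k_i\,v_i\delta_1^i f .
\]
Because $f$ is $\PP$-slice, we have $\mathscr S_{A_j}f=0$ for each $j$. Proposition \ref{pro:delta1_2slice}, applied with $A=A_j$, then shows that on the set where $\x_{A_j}\ne0$ the function $v_i\delta_1^if$ does not depend on $i\in A_j$. By the definition of the $A_j$-spherical derivative this common value is $-2f'_{s,A_j}$. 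Each inner sum is therefore $-2\bigl(\sum_{i\in A_j}k_i\bigr)f'_{s,A_j}$. Admissibility gives $2\sum_{i\in A_j}k_i=1-|A_j|$, so the inner sum equals $(|A_j|-1)f'_{s,A_j}$, which is \eqref{eq:dp}.

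Part (ii) follows immediately. If $f\in\F_\PP(\OO)$, then $f$ is $\PP$-slice and $D_\PP f=0$, so \eqref{eq:dp} gives $\dM f=-\sum_j(|A_j|-1)f'_{s,A_j}$, which is \eqref{eq:dm}.

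The only delicate point is the locus where some $\x_{A_j}$ vanishes, or where some $x_i=0$. Both $\delta_1^if$ and $f'_{s,A_j}$ are a priori defined only off these sets. I would handle this in two steps.

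First, for a single coordinate, $\delta_1^if=(f-r_if)/x_i$ extends continuously across $\{x_i=0\}$ as $2\partial_{x_i}f$ evaluated on the hyperplane, since $f\in C^1$. This makes $D_\PP f$ continuous on $\OO$.

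Second, for $|A_j|\ge2$, the set $\{\x_{A_j}=0\}$ has codimension $\ge2$ and contains no open set. I would therefore prove the identity on the dense open set where all $\x_{A_j}\ne0$ and all $x_i\ne0$. It then extends by continuity, with the spherical derivatives understood through their continuous extensions. These extensions exist because $f'_{s,A_j}=v_i\delta_1^if\cdot(-\tfrac12)$ for any $i\in A_j$, and the right-hand side is continuous as just noted. For singleton blocks $|A_j|=1$ the coefficient $|A_j|-1$ vanishes and $k_i=0$, so those blocks contribute nothing and need no care.
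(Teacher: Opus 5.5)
Your argument is correct and is essentially the paper's proof. Both group $\sum_i k_iv_i\delta_1^if$ by the blocks $A_j$, replace each $v_i\delta_1^if$ with $i\in A_j$ by the common value $-2f'_{s,A_j}$, and apply $2\sum_{i\in A_j}k_i=1-|A_j|$.

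The paper cites Corollary \ref{cor:delta1_2Pslice} for the independence of $v_i\delta_1^if$ from $i\in A_j$. You cite Proposition \ref{pro:delta1_2slice} with $A=A_j$, which is enough here and avoids that corollary's $C^2$ hypothesis. Your density and continuity treatment of the loci $\{x_i=0\}$ and $\{\x_{A_j}=0\}$ is a sound refinement that the paper leaves implicit.
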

\begin{proof}
If $\mathscr S_\PP f=0$, Corollary \ref{cor:delta1_2Pslice} states that the functions $f'_{s,A_j}=-\frac12v_{\alpha_j}\delta_1^{\alpha_j}f$ are $\PP$-slice functions, independent of the choice of $\alpha_j\in A_j$. 
Using Proposition \ref{pro:delta12}, we can write 
\begin{align*}
D_\PP f&=\dM f+\sum_{j=1}^\ell\sum_{i\in A_j}k_iv_i\delta_1^i f=\dM f-2\sum_{j=1}^\ell\sum_{i\in A_j}k_i f'_{s,A_j}\\
&=\dM f+\sum_{j=1}^\ell(|A_j|-1)f'_{s,A_j}
\end{align*}
and \eqref{eq:dp} is proved. Formula \eqref{eq:dm} follows immediately from \eqref{eq:dp} when $f\in\F_\PP(\OO)$. 
\end{proof}

If $f$ is slice-regular on $\OO$, i.e., $f\in\F_{[n]}(\OO)$, then $\ell=1$ and the function $f'_{s,A_1}$ is the usual spherical derivative of a slice function. In this case equations \eqref{eq:dp}, \eqref{eq:dm} were proved in \cite[Proposition\ 9]{CRoperators}.

\begin{corollary}\label{cor:Pslice}
Let ${\bf k}=(k_1,\ldots,k_n)$ be a sequence of $\PP$-admissible Dunkl multiplicities. 
Let $\OO\subseteq M$ be open and $\PP$-circular.  
If $f\in C^2(\OO,\Aa)$ is $\PP$-slice, then the functions $D_\PP f$, $D^c_\PP f$, $\DD_\PP f$, $\dM f$, $\partial_M f$ are $\PP$-slice on $\OO$. If $f\in C^3(\OO,\Aa)$ is $\PP$-slice, then the Laplacian $\Delta_M f$ and the Dunkl-Laplacian $\Delta_{D,M}f$ are $\PP$-slice on $\OO$. 
\end{corollary}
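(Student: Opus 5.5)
The plan is to build everything on Proposition \ref{pro:delta1_2slice} and Corollary \ref{cor:delta1_2Pslice}, together with the decomposition \eqref{eq:DP}.

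\textbf{First-order operators.} Let $f\in C^2(\OO,\Aa)$ be $\PP$-slice.
\begin{itemize}
\item[(1)] For $\dM f$ and $\partial_M f$: fix $j$. Proposition \ref{pro:delta1_2slice} applied with $A=A_j$ gives $\dM f,\partial_M f\in\ker\mathscr S_{A_j}$. Since $j$ is arbitrary, both are $\PP$-slice.
\item[(2)] For $D_\PP f$: by Proposition \ref{pro:DM}(i),
\[
D_\PP f=\dM f+\sum_j(|A_j|-1)f'_{s,A_j}.
\]
Corollary \ref{cor:delta1_2Pslice} says each $f'_{s,A_j}$ is $\PP$-slice on $\OO'$.
\item[(3)] For $D^c_\PP f$: write $D^c_\PP=2\partial_{x_0}-D_\PP$. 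The term $\partial_{x_0}f$ is $\PP$-slice, because $\partial_{x_0}$ commutes with all operators in $\mathscr S_{A_j}$: they involve only $x_i,\partial_{x_i},r_i$ with $i\ge1$. Equivalently, $\partial_{x_0}=\frac12(\dM+\partial_M)$.
\item[(4)] For $\DD_\PP f$: use $\DD_\PP f=D_\PP f-\partial_{x_0}f$.
\end{itemize}

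\textbf{The obstacle: passing from $\OO'$ to $\OO$.} The set $\OO'$ is dense in $\OO$, being the complement of a finite union of proper linear subspaces. Each function $g$ obtained above is continuous on $\OO$, since $g$ is a combination of derivatives of $f$ and of $f'_{s,A_j}$, and $f'_{s,A_j}$ is a difference quotient of a $C^2$ function, hence $C^1$. The relation $\mathscr S_{A_j}g=0$ is an identity between continuous functions in our regularity class, so it extends from $\OO'$ to $\OO$ by continuity.

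If one prefers a characterization with no operator on $g$, use Proposition \ref{pro:SA}. That characterization (even/odd pair $F_\emptyset,F_1$) is stable under limits, so the converse part of Proposition \ref{pro:SA} applies on all of $\OO$. The one point needing care is that the pair is continuous up to $\beta=0$. By Remark \ref{rem:sliceness_continuity_A}, $F_\emptyset=g^\circ_{s,A}$ and $F_1=-\beta^{-1}(\x_A g)^\circ_{s,A}$ extend continuously.

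Alternatively, avoid $\OO'$ entirely for $D_\PP f$. By \eqref{eq:DP}, $D_\PP f$ is $\dM f$ plus terms $k_iv_i\delta_1^if$. Each such term is $C^1$ on all of $\OO$ because $f\in C^2$.

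\textbf{Second-order operators.} Let $f\in C^3(\OO,\Aa)$ be $\PP$-slice.
\begin{itemize}
\item[(5)] By (1), $\partial_M f$ is $\PP$-slice and of class $C^2$. Applying (1) to it shows that $\Delta_M f=\dM(\partial_M f)$ is $\PP$-slice.
\item[(6)] Similarly, by (3), $D^c_\PP f$ is $\PP$-slice. Its regularity comes from \eqref{eq:DP}: the difference operators $\delta_1^i$ preserve $C^{k-1}$ smoothness from $C^k$ via the integral representation $\delta_1^i g=\frac{1}{x_i}\int_{-x_i}^{x_i}\partial_{x_i}g\,dt$. Hence $D^c_\PP f\in C^2$. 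Then (2) applied to it shows that $\Delta_{D,M}f=D_\PP D^c_\PP f$ is $\PP$-slice.
\end{itemize}
One could instead use \eqref{eq:DeltaDM}: $\Delta_{D,M}f$ equals $\Delta_M f$ plus multiples of $\delta_2^if$. These terms are $\PP$-slice by Corollary \ref{cor:delta1_2Pslice}, after the same density argument.
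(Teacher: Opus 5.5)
Your proposal is correct and follows the paper's proof. The same ingredients appear in the same roles: $\dM f,\partial_M f$ via Proposition \ref{pro:delta1_2slice}, $D_\PP f$ via \eqref{eq:dp} and Corollary \ref{cor:delta1_2Pslice}, $\DD_\PP f$ and $D^c_\PP f$ via $[\partial_{x_0},\mathscr S_{A_j}]=0$, and the Laplacians by composition. Your order $\dM\partial_M$, $D_\PP D^c_\PP$ instead of the paper's $\partial_M\dM$, $D^c_\PP D_\PP$ makes no difference.

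Your continuity argument, which extends $\mathscr S_{A_j}g=0$ from the dense set $\OO'$ to $\OO$ using $g\in C^1(\OO)$, makes explicit a step the paper leaves tacit. Use the integral formula for $\delta_1^i$ only locally near $\{x_i=0\}$, since $\OO$ need not contain the segment from $x$ to $r_i(x)$.
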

\begin{proof}
Let $\PP=\{A_1,\ldots,A_\ell\}$. 
From Proposition \ref{pro:delta1_2slice}, it follows that $\dM f$, $\partial_M f$ are $\PP$-slice on $\OO$. From Corollary \ref{cor:delta1_2Pslice}, formula \eqref{eq:dp} and the commutation property $[\partial_{x_0},\mathscr S_{A_j}]=0$, we get that also $D_\PP f$, $\DD_\PP f$ and $D^c_\PP f=\partial_{x_0}f-\DD_\PP f$ are $\PP$-slice. Then also $\Delta_{M}f=\partial_M (\dM f)$  and $\Delta_{D,M}f=D^c_\PP(D_\PP f)$ are $\PP$-slice. 
\end{proof}

We now show that the Dunkl Laplacian $\Delta_{D,M}$ of a $\PP$-slice function can be expressed through the operators $\delta_2^i$.

\begin{proposition}\label{pro:DeltaM}
Let $\OO\subseteq M$ be open and $\zz_2^n$-invariant. Let $\PP=\{A_1,\ldots,A_\ell\}$. Then it holds:
\begin{itemize}
  \item[(i)]
  If $f$ is $\PP$-slice, i.e., $\mathscr S_\PP(f)=0$ on $\OO$, then, for any choice of $\alpha_j\in A_j$,
\begin{equation}\label{eq:DeltaDM_Pslice}
\Delta_{D,M} f=\Delta_M f+\sum_{j=1}^\ell \tfrac{|A_j|-1}2\delta_2^{\alpha_j} f.
\end{equation}
\item[(ii)]
  If $f\in\F_\PP(\OO)$, then, for any choice of $\alpha_j\in A_j$,
\begin{equation}\label{eq:DeltaM}
\Delta_M f=\sum_{j=1}^\ell \tfrac{1-|A_j|}2\delta_2^{\alpha_j} f.
\end{equation}
\end{itemize}
\end{proposition}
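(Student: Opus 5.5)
The plan is to start from the explicit form \eqref{eq:DeltaDM} of the Dunkl Laplacian and rewrite its correction term with the operators of \eqref{eq:delta12}. By definition,
\[
\frac2{x_i}\partial_{x_i}-\frac{1-r_i}{x_i^2}=-\delta_2^i,
\]
so \eqref{eq:DeltaDM} reads
\[
\Delta_{D,M}=\Delta_M-\sum_{i=1}^n k_i\,\delta_2^i=\Delta_M-\sum_{j=1}^\ell\sum_{i\in A_j}k_i\,\delta_2^i .
\]
This is a purely algebraic identity between operators, valid for any $f$ of class $C^2$ on the $\zz_2^n$-invariant set $\OO$, away from the hyperplanes $x_i=0$.

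Next we use the $\PP$-slice hypothesis to replace each $\delta_2^i$ with $i\in A_j$ by a single representative $\delta_2^{\alpha_j}$. By Proposition \ref{pro:delta1_2slice}, applied to $A=A_j$ (and Corollary \ref{cor:delta1_2Pslice}), the functions $\delta_2^i f$ do not depend on $i\in A_j$ when $\mathscr S_{A_j}f=0$. This holds at least on the set where $\x_{A_j}\ne0$; there the inner sum becomes $\bigl(\sum_{i\in A_j}k_i\bigr)\delta_2^{\alpha_j}f$. Admissibility of the multiplicities gives $\sum_{i\in A_j}k_i=(1-|A_j|)/2$, hence
\[
-\sum_{i\in A_j}k_i\,\delta_2^{i}f=\tfrac{|A_j|-1}2\,\delta_2^{\alpha_j}f,
\]
which is \eqref{eq:DeltaDM_Pslice}. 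Part (ii) then follows at once: if $f\in\F_\PP(\OO)$, then $D_\PP f=0$ and so $\Delta_{D,M}f=D^c_\PP D_\PP f=0$, and rearranging gives \eqref{eq:DeltaM}.

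The main obstacle is the domain of validity. The operators $\delta_2^i$ are singular on $\{x_i=0\}$, and the independence of $i$ was proved only on $\OO'=\{\x_{A_j}\ne0\}$. I would first establish the identities on the dense open set where all $x_i\ne0$. Then I would extend them by continuity: the left-hand sides $\Delta_{D,M}f$ (with $\Delta_{D,M}$ understood via the smooth difference quotients) and $\Delta_M f$ are continuous, and for $f$ of class $C^2$ the quantity $\delta_2^{\alpha_j}f$ extends continuously across $x_{\alpha_j}=0$. This last point follows from a Taylor expansion in $x_{\alpha_j}$ of $f-r_{\alpha_j}f$, in which the $1/x^2$ and $2\partial/x$ singularities cancel, so that $\delta_2^{\alpha_j}f$ extends continuously to $-\partial_{x_{\alpha_j}}^2 f-\partial_{x_{\alpha_j}}^2 r_{\alpha_j}f$ up to sign conventions. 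If $C^2$ regularity is not assumed, I would interpret (i) on the open dense set where the expressions make sense, as the paper does for the spherical derivative.
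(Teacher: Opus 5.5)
Your proposal is correct and follows the paper's proof. You rewrite \eqref{eq:DeltaDM} as $\Delta_{D,M}=\Delta_M-\sum_i k_i\delta_2^i$, use Proposition \ref{pro:delta1_2slice} to replace $\delta_2^i f$ by $\delta_2^{\alpha_j}f$ on each block $A_j$, sum the multiplicities by admissibility, and get (ii) from Dunkl harmonicity. Your continuity argument across $\{x_i=0\}$ (the cancellation in $\delta_2^i f$ for $C^2$ functions) fills in a detail the paper leaves implicit, and like the paper you are tacitly using the $\PP$-circularity of $\OO$ that Proposition \ref{pro:delta1_2slice} requires.
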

\begin{proof}
The Dunkl Laplacian $\Delta_{D,M}$ on $M$ associated with the reflection group $\zz_2^n$ has the form 
given in equation \eqref{eq:DeltaDM}: 
\[
\Delta_{D,M}=\Delta_M+\sum_{i=1}^nk_i\left(\frac2{x_i}\partial_{x_i}-\frac{1-r_i}{x_i^2}\right)=\Delta_M-\sum_{i=1}^nk_i\delta_2^i,
\]
with $(k_1,\ldots,k_n)$ any $\PP$-admissible sequence of Dunkl multiplicities. 
If $f$ is $\PP$-slice, then $\delta_2^i f$ is independent of the choice of $i\in A_j$ (Proposition \ref{pro:delta1_2slice}). Therefore, for any $\alpha_j\in A_j$,
\[
\Delta_{D,M}f=\Delta_M f-\sum_{i=1}^nk_i\delta_2^i f=\Delta_M f-\sum_{j=1}^\ell \sum_{i\in A_j}k_i\delta_2^{\alpha_j} f=\Delta_M f+\sum_{j=1}^\ell \tfrac{|A_j|-1}2\delta_2^{\alpha_j} f.
\]
Point (ii) is an immediate consequence of (i), since every $\PP$-Dunkl-regular function is Dunkl harmonic. 
\end{proof}

\begin{remark}\label{rem:DM_delta1}
Any choice of elements $\alpha_j\in A_j$, for $j=1,\ldots,\ell$, provides a $\PP$-admissible sequence $(k_1,\ldots,k_n)$ with the maximum number $\ell=|\PP|$ of zero multiplicities, 
by setting
\begin{equation}\label{eq:canonical}
\begin{cases}
k_i=0&\text{\quad if $i\in\{\alpha_1,\ldots,\alpha_\ell\}$},\\
k_i=-1/2&\text{\quad otherwise.}
\end{cases}
\end{equation}
With this choice of multiplicities, the Dunkl-Cauchy-Riemann operator $D_\PP$ can be written as
\[
  D_\PP=\dM-\tfrac12\sum_{\substack{i=1\\i\not\in\alpha}}^n v_i\frac{1-r_i}{x_i}=\dM-\tfrac12\sum_{\substack{i=1\\i\not\in\alpha}}^n v_i\delta_1^i.
\]
\end{remark}

\subsection{Cauchy-Kovalevskaya extension. Dunkl-regular polynomials 
}\label{sec:CK}

Cauchy-Kovalevskaya extension is a well-known method to construct functions in the kernel of a given differential operator. It was applied in \cite[Theorem 4.6]{Huo_Ren_Xu_arXiv25} to construct monogenic polynomials on a hypercomplex subspace of an alternative algebra. Here we generalized that construction to Dunkl-regular polynomials.

\begin{theorem}\label{teo:CK}
Let $\PP=\{A_1,\ldots,A_\ell\}$. 
Let $g\in\Aa[x_1,\ldots,x_n]$ be a $\PP$-slice polynomial function, i.e., a polynomial in the kernel of 
$\mathscr S_{A_1}\ldots,\mathscr S_{A_\ell}$. Then the Cauchy-Kovalevskaya extension (or CK-extension) of $g$ induced by the operator $D_\PP=\partial_{x_0}+\DD_\PP$, defined as
\[
CK[g](x):=\sum_{k=0}^{\deg (g)}\frac{(-x_0)^k}{k!}\DD_\PP^k g(\x),
\]
is a $\PP$-Dunkl-regular polynomial of degree $\deg(g)$ in $x_0,x_1,\ldots,x_n$, i.e., $CK[g]\in\F_\PP(M)$. The operator $CK$ is $\R$-linear, and if $f=CK[g]$, it holds
\begin{equation}\label{eq:poly}
f(x)=\sum_{k=0}^{\deg(f)}\frac{x_0^k}{k!}\partial_{x_0}^k f(\x),  
\end{equation}
where $\partial_{x_0}^k f(\x)=\partial_{x_0}^k f(x)_{|x_0=0}$. Moreover, $f$ is the unique $\PP$-Dunkl-regular polynomial $f$ such that $f_{|x_0=0}=g$. 
\end{theorem}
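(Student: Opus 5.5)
The plan is to verify three things: the series terminates and defines a polynomial of degree $\deg(g)$; $D_\PP$ annihilates it; and it stays in $\ker\mathscr S_\PP$. Uniqueness then follows from the Taylor formula \eqref{eq:poly}.

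\emph{Polynomiality and degree.} I will first check that each $T_{\B,i}$ lowers the degree of a homogeneous polynomial by one. The difference quotient $(p-r_ip)/x_i$ is a polynomial, because $p-r_ip$ contains only monomials that are odd in $x_i$. Hence $\DD_\PP$ maps $\Aa[x_1,\ldots,x_n]$ homogeneous of degree $m$ to homogeneous of degree $m-1$. Consequently $\DD_\PP^kg=0$ for $k>\deg(g)$, and each term $x_0^k\DD_\PP^k g$ has total degree at most $\deg g$. So $CK[g]$ is a polynomial of degree $\deg(g)$; its $k=0$ term gives the top degree and is nonzero unless $g=0$. $\R$-linearity is clear from the definition.

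\emph{$D_\PP$-annihilation.} The operator $\DD_\PP$ acts only on $x_1,\ldots,x_n$ and therefore commutes with multiplication by real powers of $x_0$. Then
\[
\partial_{x_0}CK[g]=-\sum_{k\ge1}\frac{(-x_0)^{k-1}}{(k-1)!}\DD_\PP^k g=-\DD_\PP CK[g],
\]
so $D_\PP CK[g]=0$. This relies only on real scalars commuting with the $v_i$, which holds in any alternative algebra since $\R$ lies in the center.

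\emph{$\PP$-sliceness.} This is the main obstacle: I must show that $\mathscr S_{A_j}\DD_\PP^k g=0$ for all $j$ and $k$. Two inputs are needed.
\begin{itemize}
\item By Corollary~\ref{cor:Pslice}, if $h$ is $\PP$-slice then so is $\DD_\PP h$. The polynomial $g$ extends to all of $M$, which is $\PP$-circular, so induction on $k$ shows that each $\DD_\PP^k g$ is $\PP$-slice.
\item The operators $\mathscr S_{A_j}$ do not involve $x_0$. Hence $x_0^k h$ is $\PP$-slice whenever $h$ is; this is also visible from Proposition~\ref{pro:SA}, since a real factor can be absorbed into $F_\emptyset,F_1$.
\end{itemize}
Since $\ker\mathscr S_\PP$ is a real vector space, it follows that $CK[g]\in\ker\mathscr S_\PP$. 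Together with the previous step, $CK[g]\in\F_\PP(M)$. A minor point to check is that Corollary~\ref{cor:Pslice} is stated for functions on $\OO$ with an $x_0$-variable, whereas $g$ is independent of $x_0$. This is harmless: I view $g$ as a function on $M$ that is constant in $x_0$.

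\emph{Formula \eqref{eq:poly} and uniqueness.} Since $f=CK[g]$ is a polynomial in $x_0$ with coefficients in $\Aa[x_1,\ldots,x_n]$, Taylor's formula in $x_0$ gives \eqref{eq:poly}. For uniqueness, suppose $f$ is $\PP$-Dunkl-regular and polynomial. Then $\partial_{x_0}f=-\DD_\PP f$, and by iteration $\partial_{x_0}^kf=(-\DD_\PP)^kf$, using again that $\DD_\PP$ commutes with $\partial_{x_0}$. Evaluating at $x_0=0$, where restriction commutes with $\DD_\PP$, gives
\[
\partial^k_{x_0}f(\x)=(-1)^k\DD_\PP^k\big(f_{|x_0=0}\big).
\]
Substituting into the Taylor formula shows $f=CK[f_{|x_0=0}]$. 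Hence any two $\PP$-Dunkl-regular polynomials with the same restriction $g$ to $x_0=0$ coincide, and in particular $f_{|x_0=0}=g$ determines $f=CK[g]$.
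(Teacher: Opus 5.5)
Your proof is correct and follows the paper's argument. The shared steps are the term-by-term cancellation giving $D_\PP\, CK[g]=0$, Corollary~\ref{cor:Pslice} applied inductively to get $\PP$-sliceness of each $\DD_\PP^k g$, and uniqueness via $\partial_{x_0}f=-\DD_\PP f$ restricted to $x_0=0$. Your direct identity $\partial_{x_0}^k f=(-\DD_\PP)^k f$, hence $f=CK[f_{|x_0=0}]$, is a cleaner packaging of the paper's iteration through $D_\PP^2 f=0$, and you also justify the degree claim, which the paper leaves implicit.
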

\begin{proof}
Let $f=CK[g]$. It holds
\begin{equation}\label{eq:dx0}
  \partial_{x_0}f(x)=\sum_{k=1}^{\deg(g)}(-1)^k\frac{x_0^{k-1}\DD_\PP^k g(\x)}{(k-1)!}
\end{equation}
and
\[
\DD_\PP f(x)=\sum_{k=0}^{\deg(g)-1}\frac{(-x_0)^k\DD_\PP^{k+1} g(\x)}{k!}=\sum_{h=1}^{\deg(g)}\frac{(-x_0)^{h-1}\DD_\PP^{h} g(\x)}{(h-1)!}.
\]
Therefore $D_\PP f(x)=\partial_{x_0}f(x)+\DD_\PP f(x)=0$. Thanks to Corollary \ref{cor:Pslice}, the functions $\DD_\PP^k g$ and then $f$ are $\PP$-slice. We can conclude that $f\in\F_\PP(M)$. 
From the definition of the CK-extension, we deduce immediately that $\partial_{x_0}^kf(\x)=(-1)^k\DD_\PP^k g(\x)$ for every $k$ and $\x\in M_0$, from which we get \eqref{eq:poly}.  
It remains to prove uniqueness of the extension. If $f\equiv0$ on $M_0:=\{x\in M\,|\,x_0=0\}$, then $\DD_\PP^k f\equiv0$ on $M_0$. If $f\in\F_\PP(M)$, then also the derivative $\partial_{x_0}f$ vanish identically on $M_0$. Using
\[
0= D_\PP^2 f=(\partial_{x_0}f+\DD_\PP f)(\partial_{x_0}f+\DD_\PP f)=\partial_{x_0}^2f+2\partial_{x_0}\DD_\PP f+\DD_\PP^2 f,
\]
we get also $\partial_{x_0}^2f=0$ on $M_0$. Repeating the argument we get that all the derivatives $\partial_{x_0}^kf$ vanish on $M_0$. From equation \eqref{eq:poly} we conclude that $f\equiv0$ on $M$.
\end{proof}

Note that if $g$ is a homogeneous polynomial in $x_1,\ldots, x_n$, then its CK-extension is homogeneous in $x_0,\ldots,x_n$, of the same degree as $g$. The linear CK operator can be extended to functions $g$ that are real-analytic on an open subset of $M_0=\{x\in M\,|\,x_0=0\}$. 


Given any sequence $u=(u_1,\ldots,u_\ell)$ of elements of $\Aa$ and $a\in\Aa$, we define the ordered product 
\[
[u,a]=[u_1,\ldots,u_\ell,a]:=u_1(u_2(u_3\cdots(u_{\ell-1}(u_\ell a))\cdots)).
\]

\begin{definition}
For every ${\bf d}=(d_1,\ldots,d_\ell)\in \nn^\ell$, let $\x_\PP^d=(\x_{A_1}^{d_1},\ldots,\x_{A_\ell}^{d_\ell})$. We set $P_{{\bf d},a}:=CK[[\x_\PP^d,a]]=CK[\x_{A_1}^{d_1}(\x_{A_2}^{d_2}(\cdots(\x_{A_{\ell-1}}^{d_\ell-1}(\x_{A_\ell}^{d_\ell}a))\cdots))]$. Then $P_{{\bf d},a}$ is a $\PP$-Dunkl-regular homogeneous polynomial of degree $d=d_1+\cdots+d_\ell$ in $x_0,x_1,\ldots,x_n$. When $a=1$, we also write $P_{\bf d}=P_{(d_1,\ldots,d_\ell)}$ in place of $P_{{\bf d},1}$. 
\end{definition}

\begin{proposition}\label{pro:basis}
Let $\B_\Aa=\{v_i\}_{i=0}^{\dim\Aa}$ be a real basis of $\Aa$. 
Let $d\in\nn$. The set of polynomials 
\[
\{P_{{\bf d},v}\;|\; d_1,\ldots,d_\ell\in\nn, d_1+\cdots+d_\ell=d, v\in\B_\Aa\}
\]
is a basis of the real vector space $\F_\PP(M)\cap \Aa_d[x_0,\ldots,x_n]$ of $\PP$-Dunkl-regular homogeneous polynomials of degree $d$ on $M$. Therefore, this space has real dimension $\binom{\ell+d-1}d\dim\Aa$. 
\end{proposition}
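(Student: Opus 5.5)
The plan is to reduce the statement to the Cauchy--Kovalevskaya correspondence of Theorem \ref{teo:CK}. The restriction map $f\mapsto f_{|x_0=0}$ is an $\R$-linear bijection from $\PP$-Dunkl-regular polynomials onto $\PP$-slice polynomials in $x_1,\ldots,x_n$, with inverse $CK$. It preserves homogeneity and degree. Hence it suffices to show two things: that the polynomials $[\x_\PP^{\bf d},v]$, with $|{\bf d}|=d$ and $v\in\B_\Aa$, are $\PP$-slice, and that they form a real basis of the space of homogeneous $\PP$-slice polynomials of degree $d$ in $x_1,\ldots,x_n$. Applying $CK$ then transports the basis.

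First I check that each $g=[\x_\PP^{\bf d},v]$ is $\PP$-slice. For a fixed $A_j$, the factors $\x_{A_i}^{d_i}$ with $i\ne j$ depend only on the variables in $A_i$ and are built from the $v_k$, $k\in A_i$. Using \eqref{eq:anticommute}, I move them past $\x_{A_j}$ up to sign. Then, viewing the other variables $y$ as parameters, $g$ has the form $\x_{A_j}^{d_j}$ applied to a coefficient independent of $\x_{A_j}$. Writing $\x_{A_j}=\beta J$ with $J^2=-1$, we get $\x_{A_j}^{d_j}=\beta^{d_j}J^{d_j}$. This is $\pm\beta^{d_j}$ if $d_j$ is even and $\pm\beta^{d_j}J$ if $d_j$ is odd. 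It therefore yields an even/odd pair in $\beta$, and since $v_k$ ($k\notin A_j$) anticommutes with $J$, the representation has the form required in Proposition \ref{pro:SA}. The converse part of that proposition then gives $\mathscr S_{A_j}g=0$.

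Next, spanning. Let $g$ be homogeneous of degree $d$ and $\PP$-slice. I apply Proposition \ref{pro:SA} iteratively, one block at a time, together with \eqref{eq:fsA}: $g=g^\circ_{s,A_1}+\x_{A_1}g'_{s,A_1}$. By Proposition \ref{pro:delta1_2slice} and Corollary \ref{cor:delta1_2Pslice}, both pieces are again $\PP$-slice. Both depend on the $A_1$-variables only through $\|\x_{A_1}\|^2=-\x_{A_1}^2$. The key point is polynomiality: $g^\circ_{s,A_1}$ and $g'_{s,A_1}=-\tfrac12 v_i\delta_1^ig$ are polynomials, because $(g-r_ig)/x_i$ is a polynomial. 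They are homogeneous of degrees $d$ and $d-1$ respectively. A polynomial that is invariant under the whole $O(|A_1|)$-type symmetry, i.e. constant on the spheres of $M_A$, is a polynomial in $\|\x_{A_1}\|^2$ with coefficients in the remaining variables. Rewriting $\|\x_{A_1}\|^2=-\x_{A_1}^2$ expresses $g$ as $\sum_{d_1}\x_{A_1}^{d_1}h_{d_1}$, where $h_{d_1}$ is free of the $A_1$-variables and is $\{A_2,\ldots,A_\ell\}$-slice. Induction on $\ell$ then gives $g$ as a real combination of $[\x_\PP^{\bf d},v]$, after using \eqref{eq:anticommute} to reorder products into the nested form. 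In the nonassociative case the moves between blocks only produce signs.

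Finally, independence. The polynomials $[\x_\PP^{\bf d},v]$ for distinct ${\bf d}$ have distinct multidegrees in the blocks of variables, so only those with equal ${\bf d}$ can interact. For fixed ${\bf d}$, evaluate at a point where $\x_{A_j}=v_{\alpha_j}$ for all $j$. The map $v\mapsto[\x_\PP^{\bf d},v]$ is then left multiplication by a product of units $\pm v_{\alpha_j}$. This map is invertible, since $v_{\alpha}(v_{\alpha}a)=-a$ by alternativity. So the images of the basis $\B_\Aa$ are independent. The dimension count follows from the $\binom{\ell+d-1}{d}$ compositions of $d$ into $\ell$ nonnegative parts.

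The main obstacle I expect is the spanning step. I need to show rigorously that a $\PP$-slice polynomial decomposes with polynomial, rather than merely continuous, components $F^A_\emptyset,F^A_1$ in powers of $\beta^2$. I also need the nonassociative reordering of the nested products to be handled correctly.
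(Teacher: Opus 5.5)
Your proof is correct. Its outer structure matches the paper: transport through the restriction/$CK$ bijection of Theorem \ref{teo:CK}, then strip off one block at a time and induct. The block step itself is different.

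The paper's block step is purely algebraic and uses only the Casimir operator. It splits $g$ into pieces $g_i$ homogeneous of degree $i$ in the $A$-variables and rewrites $\SA g_i=0$ via \eqref{eq:SA} as $ig_i=-\x_A\DD_A g_i$. Iterating with $\SA\DD_A=-\DD_A\SA$ and the degree-lowering of $\DD_A$, it obtains $g_i=(i!)^{-1}(-\x_A)^i\DD_A^i g_i$, an explicit coefficient of $\x_A^i$. The next block is handled by citing \cite[Proposition 50]{BinosiPerotti2025}.

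You go instead through Proposition \ref{pro:SA} and \eqref{eq:fsA}. This reduces everything to the elementary fact that a polynomial constant on the spheres where $\|\x_A\|$ is constant is a polynomial in $\|\x_A\|^2$, with polynomial coefficients in the other variables. Your route avoids the $\mathfrak{osp}(1|2)$ relations, but it relies on the structure theorem for the full $\ker\mathscr S_A$. The paper's step shows that $\SA$ alone already forces the decomposition.

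Your write-up also covers two points the paper leaves implicit. You check that $[\x_{\PP}^{\bf d},v]$ is $\PP$-slice, and you prove the linear independence (separate multidegrees, then evaluate at $\x_{A_j}=v_{\alpha_j}$) that the dimension count actually needs.

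The two obstacles you flag are not real:
\begin{itemize}
\item Polynomiality follows from your own remark that $g^\circ_{s,A_1}$ and $-\tfrac12 v_i\delta_1^i g$ are polynomials, plus restriction to a ray. The restriction is even in $t$ by sphere-invariance, or by $r_i$-evenness if $|A_1|=1$.
\item No reordering is needed in the spanning step. You strip $\x_{A_1}^{d_1}$ from the outside, and the induction returns $h_{d_1}$ already in nested form. The identity \eqref{eq:anticommute} is only needed in the membership check, where it applies because each $\x_{A_i}^{d_i}$ is real or a real multiple of $\x_{A_i}$.
\end{itemize}

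The one line you should add is why $h_{d_1}$ is $\{A_2,\ldots,A_\ell\}$-slice. For $i\ne 1$, $\mathscr S_{A_i}$ commutes with $r_{A_1}$ and with multiplication by real functions of the $A_1$-variables. So apply it termwise to $g^\circ_{s,A_1}$ and $g'_{s,A_1}$ and compare coefficients of $\|\x_{A_1}\|^{2k}$. Corollary \ref{cor:delta1_2Pslice} by itself covers only $g'_{s,A_1}$, and only off $\{\x_{A_j}=0\}$; for polynomials this restriction is harmless, since everything extends by density.
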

\begin{proof}
Given a subset $A$ of $[n]$, let $g\in\ker\mathscr S_A\cap \Aa_d[x_1,\ldots,x_n]$. Decompose $g$ as $g=\sum_{i=0}^d g_{i}$, where $g_{i}$ is homogeneous of degree $i$ w.r.t.\ the set of variables $\{x_j\;|\; j\in A\}$. 
Since $\SA$ is a 0-degree operator that preserve homogeneity w.r.t.\ $x_j$, $j\in A$, it holds $0=\SA g=\sum_{i=0}^d \SA g_{i}$ if and only if $\SA g_{i}=0$ for every $i=0,\ldots,d$. 
Using \eqref{eq:SA}, the equality $\SA g_{i}=0$ is equivalent to
\[
\Eu_A(g_{i})=i g_{i}=-\x_A\,\DD_A g_{i}.
\]
Since $\SA(\DD_A g_{i})=-\DD_A(\SA g_{i})=0$ (\cite[Remark 34]{BinosiPerotti2025}) and $\DD_\B g_{i}$ is $(i-1)$-homogeneous  w.r.t.\ $\{x_j\;|\; j\in A\}$ (see \cite[Lemma\ 2.9]{Rosler}), we also have 
\[
(i -1)\DD_A g_{i}=-\x_A\, \DD_A^2 g_{i}, 
\]
and then $i(i-1)g_{i}=(-\x_A)^2\DD_A^2 g_{i}$. Iterating this computation, we finally get
\[
g_{i}=(i!)^{-1}(-\x_A)^i\DD_A^i g_{i}=\x_A^i \tilde g_i
\]
for a polynomial function $\tilde g_i $ not depending on $x_i$, $i\in A$. Therefore $g=\sum_{i=0}^d \x_A^i \tilde g_i$. 

If $g\in\ker\mathscr S_\PP\cap \Aa_d[x_1,\ldots,x_n]$, then $g\in\cap_{j=1}^\ell\ker\mathscr S_{A_j}$. By the previous argument, we obtain
\[
g=\sum_{i=0}^d \x_{A_1}^i \tilde g_{i,1},
\]
with $\tilde g_{i,1}$ a polynomial function in the set of variables $\{x_j\;|\; j\in[n]\setminus A_1\}$. Since \cite[Proposition 50]{BinosiPerotti2025} implies that $0=\mathscr S_{A_2}g=\sum_{i=0}^d \x_{A_1}^i \mathscr S_{A_2}(\tilde g_{i,1})$, it must be $\mathscr S_{A_2}(\tilde g_{i,1})=0$ for every $i$. Repeating the argument above, we finally get the decomposition
\[
g=\sum_{d_1+\cdots +d_\ell=d} \x_{A_1}^{d_1}(\x_{A_2}^{d_2}(\cdots(\x_{A_{\ell-1}}^{d_\ell-1}(\x_{A_\ell}^{d_\ell}a_{{\bf d}}))\cdots))=\sum_{d_1+\cdots +d_\ell=d}[\x_\PP^d,a_{{\bf d}}], 
\]
with $a_{{\bf d}}=\sum_{i=0}^{\dim\Aa}a_{{\bf d},i}v_i\in\Aa$ and $a_{{\bf d},i}\in\R$. Therefore $CK[g]=\sum_{i=0}^{\dim\Aa}\sum_{d_1+\cdots +d_\ell=d}a_{{\bf d},i} P_{{\bf d},v_i}$. 

If $f\in\F_\PP(M)\cap \Aa_d[x_0,\ldots,x_n]$, from Theorem \ref{teo:CK} we get that $f=CK[f_{|\{x_0=0\}}]$. Since the restriction $f_{|\{x_0=0\}}$ is a $\PP$-slice homogenous polynomial in $x_1,\ldots,x_n$, we conclude that $f$ is a $\R$-linear combination of the polynomials $P_{{\bf d},v_i}$.  
\end{proof}

Let $\epsilon_j=(0,\ldots,1,\ldots,0)\in\nn^\ell$, with a 1 in $j$-th position. 
The functions $P_{\epsilon_j}=CK[\x_{A_j}]=x_0+\x_{A_j}=x_{A_j}$, $j=1,\ldots,\ell$, form a basis for linear polynomials in $\F_\PP(M)$. More generally, it holds $P_{m\epsilon_j}=CK[\x_{A_j}^m]=(x_0+\x_{A_j})^m={(x_{A_j})}^m\in\F_\PP(M)$, $j=1,\ldots,\ell$ and $m\in\nn$. From Theorem \ref{teo:CK} we get that, for every $k=1,\ldots m$, 
\begin{equation}\label{eq:DkP}
\DD_\PP^k((\x_{A_j})^m)=(-1)^k \frac{m!}{(m-k)!}(\x_{A_j})^{m-k}. 
\end{equation}
Since $0=D_\PP(P_{m\epsilon_j})=(\partial_{x_0}+\DD_\PP)P_{m\epsilon_j}$, we get that
\[
\tfrac12D_\PP^c(P_{m\epsilon_j})=\partial_{x_0}P_{m\epsilon_j}= m\,P_{(m-1)\epsilon_j}
\]
for every $m\ge1$, where $P_{0\epsilon_j}=CK[1]=1$. Then for every $j=1,\ldots,\ell$, the family $\{P_{m\epsilon_j}\}_{m\in\nn}\subseteq\F_\PP(M)$ is an \emph{Appell sequence} w.r.t.\ the operator $\tfrac12D_\PP^c=\tfrac12(\partial_{x_0}-\DD_\PP)$. 

Observe that in general $P_{\bf d}=CK[[\x_\PP^d,1]]$ is different from  the product $[x_\PP^d,1]$. We have 
\[
\tfrac12D_\PP^c(P_{{\bf d}})=\partial_{x_0}P_{{\bf d}}=-\DD_\PP P_{\bf d}=-CK[\DD_\PP(\x_{A_1}^{d_1}(\x_{A_2}^{d_2}(\cdots(\x_{A_{\ell-1}}^{d_\ell-1}\x_{A_\ell}^{d_\ell}))\cdots))].
\]

\subsubsection*{The associative case}
Assume that $\Aa$ is associative. 

\begin{definition}
For every $(d_1,\ldots,d_\ell)\in \nn^\ell$, we set $P_{(d_1,\ldots,d_\ell)}:=CK[\x_{A_1}^{d_1}\cdots\x_{A_\ell}^{d_\ell}]$. Then $P_{(d_1,\ldots,d_\ell)}$ is a $\PP$-Dunkl-regular homogeneous polynomial of degree $d=d_1+\cdots+d_\ell$ in $x_0,x_1,\ldots,x_n$. It holds $P_{{\bf d},v_i}=P_{(d_1,\ldots,d_\ell)}v_i$ for every $i=0,\ldots,\dim\Aa$.
\end{definition}

\begin{corollary}\label{cor:basis_associative}
Let $d\in\nn$. The set of polynomials 
\[\{P_{(d_1,\ldots,d_\ell)}\;|\; d_1,\ldots,d_\ell\in\nn, d_1+\cdots+d_\ell=d\}
\]
spans independently the right $\Aa$-module $\F_\PP(M)\cap \Aa_d[x_0,\ldots,x_n]$ of $\PP$-Dunkl-regular homogeneous polynomials of degree $d$ on $M$. Therefore, this module has dimension $\binom{\ell+d-1}d$ over $\Aa$. 
\end{corollary}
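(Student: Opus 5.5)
In the associative case the corollary should follow from Proposition \ref{pro:basis} together with the identity $P_{{\bf d},v_i}=P_{(d_1,\ldots,d_\ell)}v_i$. The plan is first to check that identity, then to derive spanning and independence over $\Aa$ from the real basis.

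\textbf{Step 1: the identity.} Since $\Aa$ is associative, the ordered product satisfies
\[
[\x_\PP^d,a]=\x_{A_1}^{d_1}\cdots\x_{A_\ell}^{d_\ell}\,a .
\]
Every $\DD_\PP$ acts by left multiplication by the $v_i$ together with real-scalar differential-difference operators. By associativity it therefore commutes with right multiplication by a constant $a\in\Aa$. Hence $CK[g\,a]=CK[g]\,a$, and so $P_{{\bf d},a}=P_{(d_1,\ldots,d_\ell)}\,a$.

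\textbf{Step 2: spanning.} By Proposition \ref{pro:basis}, every $f\in\F_\PP(M)\cap\Aa_d[x_0,\ldots,x_n]$ is a real combination
\[
f=\sum_{{\bf d},i}a_{{\bf d},i}\,P_{{\bf d},v_i}=\sum_{{\bf d}}P_{(d_1,\ldots,d_\ell)}\,a_{\bf d},
\qquad a_{\bf d}=\sum_i a_{{\bf d},i}v_i\in\Aa .
\]
So the polynomials $P_{(d_1,\ldots,d_\ell)}$ generate the module over $\Aa$. The right $\Aa$-module structure itself comes from $D_\PP(fa)=(D_\PP f)a$ and $\mathscr S_\PP(fa)=(\mathscr S_\PP f)a$, again by associativity.

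\textbf{Step 3: independence.} Suppose $\sum_{\bf d}P_{(d_1,\ldots,d_\ell)}\,a_{\bf d}=0$. Expand each $a_{\bf d}$ in the real basis $\B_\Aa$. This gives a real linear relation among the $P_{{\bf d},v_i}$, which are linearly independent by Proposition \ref{pro:basis}. Hence all real coefficients vanish, so every $a_{\bf d}=0$.

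\textbf{Dimension count.} The number of ${\bf d}\in\nn^\ell$ with $d_1+\cdots+d_\ell=d$ is $\binom{\ell+d-1}{d}$, so the module is free over $\Aa$ of that rank.

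The only step needing care is the commutation of $CK$ with right multiplication in Step 1. It is immediate from associativity, since the reflections $r_i$ and the real derivatives obviously commute with right multiplication by a constant.
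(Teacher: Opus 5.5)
Your proposal is correct and is essentially the paper's own argument. The paper deduces the corollary directly from Proposition~\ref{pro:basis} via the identity $P_{{\bf d},v_i}=P_{(d_1,\ldots,d_\ell)}v_i$, which it states without proof; you justify that identity correctly by noting that, since $\Aa$ is associative, $\DD_\PP$ and hence $CK$ commute with right multiplication by constants. One small point: the written proof of Proposition~\ref{pro:basis} only establishes spanning, so the independence you rely on in Step~3 is not actually argued there. It is nonetheless true and easy to check. Restrict $\sum_{\bf d}P_{(d_1,\ldots,d_\ell)}a_{\bf d}=0$ to $x_0=0$ to get $\sum_{\bf d}\x_{A_1}^{d_1}\cdots\x_{A_\ell}^{d_\ell}a_{\bf d}=0$. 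Then separate multidegrees in the variable groups $A_j$, and evaluate at the point where $\x_{A_j}=v_{\alpha_j}$, where the product is invertible; this forces every $a_{\bf d}=0$.
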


\begin{example}\label{ex:P122}
Let $\Aa=\R_6$ and consider the eight-dimensional hypercomplex subspace $M=\langle1,e_1,e_2,e_3,e_4,e_5,e_6,e_{123456}\rangle$. Let $\PP=\{A_1,A_2,A_3\}$ be the partition of the set $[7]$ with elements $A_1=\{1\}$, $A_2=\{2,3,4\}$, $A_3=\{5,6,7\}$. Let $\x_{A_1}=x_1e_1$,  $\x_{A_2}=x_2e_2+x_3e_3+x_4e_4$ and $\x_{A_3}=x_5e_5+x_6e_6+x_{123456}e_{123456}$. 
From Theorem \ref{teo:CK}, we obtain the $\PP$-Dunkl-regular polynomial $P_{(1,2,2)}=CK[\x_{A_1}\x_{A_2}^2\x_{A_3}^2]$:
\begin{align*}
P_{(1,2,2)}&=
\tfrac{1}{15}x_0^5+
\tfrac{1}3x_0^4 \x_{A_1}-
\tfrac{1}3x_0^3\left(2\x_{A_1}\big(\x_{A_2}+\x_{A_3}\big)+\big(\|\x_{A_2}\|^2+\|\x_{A_3}\|^2\big)\right)-
x_0^2 \x_{A_1}\big(\|\x_{A_2}\|^2+\|\x_{A_3}\|^2\big)\\
&\quad +x_0\left(2\x_{A_1}\big(\x_{A_2}\|\x_{A_3}\|^2+\x_{A_3}\|\x_{A_2}\|^2\big)+\|\x_{A_2}\|^2\|\x_{A_3}\|^2 \right)
+\x_{A_1} \|\x_{A_2}\|^2\|\x_{A_3}\|^2 .
\end{align*}
\end{example}

\begin{remark}
The polynomial $P_{(1,2,2)}$ coincides with the product $\mathcal T_{(1,2,2)}e_1$, where $\mathcal T_{(1,2,2)}$ is the $T$-regular polynomial defined following \cite[Definition 5.6]{GhiloniStoppato_arXiv24}. The polynomials $\mathcal T_{(d_1,\ldots,d_\ell)}$ defined recursively in \cite{GhiloniStoppato_arXiv24} in the associative case, 
form another basis of the $\Aa$-submodule $\F_\PP(M)\cap \Aa_d[x_0,\ldots,x_n]$ of $\F_\PP(M)$, different from our basis $\{P_{(d_1,\ldots,d_\ell)}\}$ obtained by CK-extension of the products $\x_{A_1}^{d_1}\cdots\x_{A_\ell}^{d_\ell}$. 
\end{remark}

\section{General Fueter Theorem and Fueter trees}\label{sec:GFT}

If $\PP=\{A_1,\ldots,A_\ell\}$ and ${\bf k}=(k_1,\ldots,k_n)$ is a sequence of $\PP$-admissible Dunkl multiplicities, then the sum $\kappa$ of the Dunkl multiplicities will be called the \emph{Dunkl weight} of the functions in $\F_\PP(\OO)$. It holds $\kappa=\sum_{j=1}^\ell(1-|A_j|)/2=(\ell-n)/2\le0$. 

\begin{theorem}\label{teo:DeltaM}
Let $\OO\subseteq M$ be open and 
$\PP$-circular. Assume that every set $A_j\in\PP$ has cardinality $|A_j|\ne2$. 
Let $f\in\F_\PP(\OO)$ 
be a Dunkl-regular function of Dunkl weight $\kappa=(\ell-n)/2<0$, where $\ell=|\PP|$. Then its Laplacian $\Delta_M f$ is a sum of at most $\ell$ Dunkl-regular functions of Dunkl weight $\kappa+1$. Moreover, $\Delta_M f$ is a $\PP$-slice function on $\OO$. More precisely, 
\[
\Delta_M f=\sum_{j\,;\,|A_j|>2}g_j,
\]
where $g_j\in\F_{\PP_j}(\OO)$, with $\PP_j=(\PP\setminus A_j)\cup \{\{i_1\},\{i_2\},\{i_3,\ldots,i_{|A_j|}\}\}$ if $A_j=\{i_1,i_2,\ldots,i_{|A_j|}\}$.  
\end{theorem}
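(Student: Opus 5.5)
The plan is to start from Proposition \ref{pro:DeltaM}(ii), which gives
\[
\Delta_M f=\sum_{j=1}^\ell \tfrac{1-|A_j|}2\,\delta_2^{\alpha_j}f
\]
for any choice of $\alpha_j\in A_j$. We set $g_j:=\tfrac{1-|A_j|}2\delta_2^{\alpha_j}f$. The terms with $|A_j|=1$ vanish, and $|A_j|=2$ is excluded by hypothesis, so only indices with $|A_j|>2$ survive. Since $\PP_j$ has $\ell+2$ parts, the weight of $\F_{\PP_j}(\OO)$ is $(\ell+2-n)/2=\kappa+1$. Corollary \ref{cor:delta1_2Pslice} already gives that each $\delta_2^{\alpha_j}f$ is $\PP$-slice, hence so is $\Delta_M f$; Corollary \ref{cor:Pslice} gives the same conclusion directly. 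It remains to show that $g_j\in\F_{\PP_j}(\OO)$, i.e.\ $\mathscr S_{\PP_j}g_j=0$ and $D_{\PP_j}g_j=0$.

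\textbf{Sliceness.} For the blocks $A_i$ with $i\ne j$, $\mathscr S_{A_i}g_j=0$ by Corollary \ref{cor:delta1_2Pslice}, and the singleton blocks impose nothing since $\mathscr S_{\{i\}}=0$. For $B=\{i_3,\ldots,i_{|A_j|}\}$ I would use the representation from the proof of Proposition \ref{pro:delta1_2slice}: $g_j=G_\emptyset(z,y)+JG_1(z,y)$, with $\beta=\|\x_{A_j}\|$ and $J=\beta^{-1}\x_{A_j}$. Write $\x_{A_j}=x_{i_1}v_{i_1}+x_{i_2}v_{i_2}+\x_B$ and put $\beta_B=\|\x_B\|$, $J_B=\beta_B^{-1}\x_B$. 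Then
\[
g_j=\Big(G_\emptyset+\beta^{-1}(x_{i_1}v_{i_1}+x_{i_2}v_{i_2})G_1\Big)+J_B\big(\beta_B\beta^{-1}G_1\big).
\]
Here $\beta^2=x_{i_1}^2+x_{i_2}^2+\beta_B^2$, so the first bracket is even in $\beta_B$ and the second coefficient is odd. The converse part of Proposition \ref{pro:SA}, applied to $B$ with the variables $x_{i_1},x_{i_2}$ now treated as parameters $y$, then gives $\mathscr S_Bg_j=0$.

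\textbf{Dunkl-regularity.} This is the core computation. Choose $\alpha_j=i_1$ and use the canonical multiplicities of Remark \ref{rem:DM_delta1}. For $\PP$, take $k_{i_1}=0$ and $k=-1/2$ on the rest of $A_j$. For $\PP_j$, take $k_{i_1}=k_{i_2}=k_{i_3}=0$ and $k=-1/2$ on the rest of $B$. All other blocks keep the same multiplicities in both. Then
\[
D_{\PP_j}=D_\PP+\tfrac12 v_{i_2}\delta_1^{i_2}+\tfrac12 v_{i_3}\delta_1^{i_3}.
\]
The operator $\delta_2^{i_1}$ commutes with $\partial_{x_0}$, with $\partial_{x_m}$ for $m\ne i_1$, and with $v_m\delta_1^m$ for $m\ne i_1$, since it acts only in the variable $x_{i_1}$. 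Since $D_\PP f=0$, Proposition \ref{pro:delta12} gives
\[
D_\PP\delta_2^{i_1}f=v_{i_1}[\partial_{x_{i_1}},\delta_2^{i_1}]f=-v_{i_1}\delta_2^{i_1}\delta_1^{i_1}f.
\]
For the remaining terms, commuting operators in different variables and using that $v_i\delta_1^if$ does not depend on $i\in A_j$ (Proposition \ref{pro:delta1_2slice}), we get for $m=i_2,i_3$:
\[
v_m\delta_1^m\delta_2^{i_1}f=\delta_2^{i_1}(v_m\delta_1^mf)=\delta_2^{i_1}(v_{i_1}\delta_1^{i_1}f)=v_{i_1}\delta_2^{i_1}\delta_1^{i_1}f.
\]
Summing, $D_{\PP_j}\delta_2^{i_1}f=(-1+\tfrac12+\tfrac12)\,v_{i_1}\delta_2^{i_1}\delta_1^{i_1}f=0$ on $\OO'$. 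By Theorem \ref{teo:classification1}, the space $\F_{\PP_j}(\OO)$ does not depend on the admissible multiplicities chosen, so this particular choice suffices.

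\textbf{Main obstacle.} The step I expect to be hardest is regularity: showing that $g_j$ is of class $C^1$ on all of $\OO$, not only on $\OO'$ where $\x_{A_j}\ne0$, so that the identities extend and $g_j$ is a genuine element of $\F_{\PP_j}(\OO)$. My first attempt would use that $\Delta_M f$ is smooth on $\OO$, which suggests the singularities of the individual $g_j$ are removable. Concretely, I would write $\delta_2^{i_1}f=-2\beta^{-1}\partial_\beta F_\emptyset+J\big(2\beta^{-2}F_1-2\beta^{-1}\partial_\beta F_1\big)$ and use the evenness of $F_\emptyset$ and oddness of $F_1$ in $\beta$, together with the real analyticity of Dunkl harmonic functions (for admissible, regular multiplicities) or a Taylor expansion in $\beta$, to show that these quotients extend smoothly across $\beta=0$. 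Once this extension holds, the identities $\mathscr S_{\PP_j}g_j=0$ and $D_{\PP_j}g_j=0$ pass from $\OO'$ to $\OO$ by continuity.
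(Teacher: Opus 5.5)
Your proof is correct and is essentially the paper's: the same canonical multiplicities and the same commutation of $\delta_2^{i_1}$ through $D_\PP$, using $[\partial_{x_{i_1}},\delta_2^{i_1}]f=-\delta_2^{i_1}\delta_1^{i_1}f$ and the independence of $v_i\delta_1^if$ on $i\in A_j$, with sliceness from Corollary~\ref{cor:delta1_2Pslice}. The one difference is that you verify $\mathscr S_B g_j=0$ by hand, re-splitting $J$, where the paper cites \cite[Proposition 49]{BinosiPerotti2025} for $\ker\mathscr S_\PP\subseteq\ker\mathscr S_{\PP_j}$.

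The regularity point you flag is not discussed in the paper, and it is easier than you expect. The formula for $\delta_2^{i_1}f$ is singular only on $\{x_{i_1}=0\}\supseteq\{\x_{A_j}=0\}$, and near that hyperplane Taylor's formula gives $\delta_2^{i}f(x)=\int_{-1}^{1}\int_0^1(s-1)\,\partial_{x_i}^2f\big(x+t(s-1)x_iv_i\big)\,dt\,ds$. So $f\in C^3$ already gives $\delta_2^{i}f\in C^1(\OO)$, with no analysis in $\beta$ and no appeal to real analyticity.
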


\begin{remark}\label{rem:Pj}
The statement of Theorem \ref{teo:DeltaM} in particular implies that the functions $g_j$ belong to the space $\F_{\PP_j}(\OO)$ for any partition $\PP_j$ obtained by $\PP$ deleting from $A_j$ two elements and adding the corresponding two singletons.
\end{remark}

Before proving the theorem, we gives some immediate corollaries of this result.
In the following, we call $\PP$ an \emph{odd partition} if every set $A_j\in\PP$ has odd cardinality. This is equivalent to have $\kappa_{A_j}:=\sum_{i\in A_j}k_i\in\zz$ for all $j=1,\ldots,\ell$.

\begin{corollary}[General Fueter Theorem]\label{cor:GFT}
Let $\OO\subseteq M$ be open and $\PP$-circular. 
Assume that $\PP$ is an odd partition.  Let $f\in\F_\PP(\OO)$ be a Dunkl-regular function of 
Dunkl weight $\kappa<0$. 
Then $\kappa$ is a negative integer and the iterated Laplacian $\Delta_M^{|\kappa|}f=(\Delta_M)^{\frac{n-\ell}2}f$ is a $\PP$-slice monogenic function on $\OO$, namely, it belongs to $\ker\mathscr S_\PP$ and
\[
\dM{(\Delta_M)}^{\frac{n-\ell}2}f=0. 
\]
\hfill\qed
\end{corollary}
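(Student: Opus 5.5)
The corollary follows from Theorem \ref{teo:DeltaM} by induction on $m:=|\kappa|=(n-\ell)/2$. First, since every $A_j$ has odd cardinality, $n-\ell=\sum_j(|A_j|-1)$ is even. Hence $\kappa=(\ell-n)/2$ is a non-positive integer, and it is negative by hypothesis. Moreover, no $A_j$ has cardinality $2$, so Theorem \ref{teo:DeltaM} applies to $\PP$.

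The key observation is that the refined partitions $\PP_j$ of Theorem \ref{teo:DeltaM} are again odd. Indeed $\PP_j$ is obtained from $\PP$ by replacing $A_j$ (with $|A_j|\ge3$ odd) by the two singletons $\{i_1\},\{i_2\}$ and a set of cardinality $|A_j|-2$, which is still odd. Thus $|\PP_j|=\ell+2$ and the weight of $\PP_j$ is $\kappa+1$. Each $\PP_j$ is a refinement of $\PP$. Since $\OO$ is $\PP$-circular, it is also $\PP_j$-circular: it is $A$-circular for the new blocks $\{i_3,\dots\}\subseteq A_j$, and it is $\zz_2^n$-invariant, hence invariant under the reflections for the singletons.

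The induction statement is: \emph{for every odd partition $\PP$ and every $f\in\F_\PP(\OO)$ with weight $\kappa\le0$, the function $\Delta_M^{|\kappa|}f$ is a finite sum of monogenic functions, i.e.\ it lies in $\ker\dM$.} For $\kappa=0$ all blocks are singletons and $\F_\PP(\OO)=\M(\OO)$ by Examples \ref{ex:FP}(i), so $\dM f=0$. For $\kappa<0$, Theorem \ref{teo:DeltaM} gives $\Delta_M f=\sum_j g_j$ with $g_j\in\F_{\PP_j}(\OO)$ of weight $\kappa+1$. Since the $g_j$ are Dunkl-regular, they are real analytic, so iterating $\Delta_M$ is legitimate. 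By induction, $\dM\Delta_M^{|\kappa|-1}g_j=0$ for each $j$, and by linearity $\dM\Delta_M^{|\kappa|}f=\sum_j\dM\Delta_M^{|\kappa|-1}g_j=0$.

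It remains to show that $\Delta_M^{|\kappa|}f\in\ker\mathscr S_\PP$. Here I would use Corollary \ref{cor:Pslice}: $\Delta_M$ preserves $\PP$-sliceness for sufficiently smooth functions. Since $f\in\ker\mathscr S_\PP$ and $f$ is smooth, iterating gives $\Delta_M^{m}f\in\ker\mathscr S_\PP$ for all $m$. Alternatively, the final sentence of Theorem \ref{teo:DeltaM} can be applied at each step.

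The main obstacle is regularity. One must justify that $f$, and each intermediate $g_j$, is $C^\infty$, so that Corollary \ref{cor:Pslice} and repeated applications of Theorem \ref{teo:DeltaM} are valid. I would derive this from the fact that Dunkl harmonic functions with regular multiplicities are real analytic (Corollary \ref{cor:Kreg} places the multiplicities in $K^{reg}$). Apart from this point, the argument is bookkeeping on partitions.
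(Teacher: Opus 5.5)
Your argument is correct and is essentially the paper's. The corollary comes from iterating Theorem \ref{teo:DeltaM} down the Fueter tree, using that the refinements $\PP_j$ of an odd partition are again odd (so $|A_j|=2$ never occurs), that the leaves are $\F_{\{1\},\ldots,\{n\}}(\OO)=\M(\OO)$, and that $\PP$-sliceness is preserved; your check that $\OO$ remains $\PP_j$-circular fills in a detail the paper leaves implicit.

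Two small caveats: your ``alternative'' via the last sentence of Theorem \ref{teo:DeltaM} only yields $\PP_j$-sliceness of $\Delta_M g_j$ after the first step, so Corollary \ref{cor:Pslice} (or Corollary \ref{cor:delta1_2Pslice}) is genuinely needed, and real analyticity of Dunkl-harmonic functions with negative regular multiplicities is not established in the paper, which simply assumes the smoothness needed to apply Theorem \ref{teo:DeltaM} repeatedly.
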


\begin{corollary}\label{cor:polyharmonic}
Let $\OO\subseteq M$ be open and $\PP$-circular. 
Assume that $\PP$ is an odd partition. Then every Dunkl-regular function $f\in\F_\PP(\OO)$ is a polyharmonic function of order $|\kappa|+1=\frac{n-\ell}2+1$  on $\OO$, namely,
\[
\Delta_M^{|\kappa|+1}f=(\Delta_M)^{\frac{n-\ell+2}2}f=0. 
\]
\hfill\qed
\end{corollary}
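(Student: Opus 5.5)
The plan is to prove Corollary \ref{cor:polyharmonic} as a direct consequence of the General Fueter Theorem (Corollary \ref{cor:GFT}), together with the fact that monogenic functions are harmonic. Two cases arise, according to the Dunkl weight $\kappa=(\ell-n)/2$.

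\emph{Case $\kappa=0$.} Then $\ell=n$, so every $A_j$ is a singleton and $\PP=\{\{1\},\ldots,\{n\}\}$. By Examples \ref{ex:FP}(i), $\F_\PP(\OO)=\M(\OO)$, so $f$ is monogenic. Monogenic functions are real analytic, hence of class $C^2$, and therefore harmonic: $\Delta_M f=\partial_M\dM f=0$. This is the claim with $|\kappa|+1=1$.

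\emph{Case $\kappa<0$.} Corollary \ref{cor:GFT} gives that $h:=\Delta_M^{|\kappa|}f$ satisfies $\dM h=0$. Before applying the Laplacian again we need enough regularity. Every $\PP$-Dunkl-regular $f$ is Dunkl harmonic. I would take as given that $f$ is smooth (real analytic), which should follow from hypoellipticity of the Dunkl Laplacian with $K^{reg}$ multiplicities, as in Corollary \ref{cor:Kreg}; alternatively one can check that this smoothness is already implicit in the statement of Corollary \ref{cor:GFT}. Then $h$ is $C^2$ and monogenic, so $\Delta_M h=\partial_M(\dM h)=0$. That is,
\[
\Delta_M^{|\kappa|+1}f=0,\qquad |\kappa|+1=\tfrac{n-\ell}2+1 .
\]

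The only delicate point is this regularity, namely that $f\in\F_\PP(\OO)$, assumed only $C^1$, is regular enough for iterated Laplacians to make sense. I expect to handle it exactly as Corollary \ref{cor:GFT} implicitly does. The rest is immediate.
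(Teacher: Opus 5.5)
Your argument is correct and is the proof the paper intends: the paper marks the corollary as immediate from Corollary \ref{cor:GFT}, since $\Delta_M^{|\kappa|}f$ is monogenic and hence harmonic, and the trivial case $\ell=n$, where $\F_\PP(\OO)=\M(\OO)$, is handled directly exactly as you do. The appeal to hypoellipticity of the Dunkl Laplacian is unnecessary, and the paper does not use it: beyond the regularity already tacitly presupposed in Theorem \ref{teo:DeltaM} and Corollary \ref{cor:GFT}, the final application of $\Delta_M$ is justified because monogenic functions are real analytic, as recalled in the paper.
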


Corollaries \ref{cor:GFT} and \ref{cor:polyharmonic} improve known Fueter-type theorems also in the slice regular functions case, i.e., with $\PP=\{[n]\}$, on non-associative hypercomplex subspaces (see \cite[Theorem 27]{CRoperators}, where an additional condition on the functions was assumed). 

Observe that the version of Fueter Theorem given by Corollary \ref{cor:GFT} for the class of $T$-regular functions (see Example \ref{ex:FP} (iv))  is different from the Fueter-type Theorem proved in \cite{GhiloniStoppatoFueterSce}, where the power of the Euclidean Laplacian is replaced by the composition of other differential operators of the second order.

If $\PP$ is not an odd partition, at least 
one $\kappa_{A_j}$ belongs to the semi-integers $\tfrac12\zz$. 
Theorem \ref{teo:DeltaM} can be applied until one of the partitions $\PP_j$ of the statement contains one 
element of cardinality two. 
In particular, when $\ell=1$ and $n$ is even, one can apply the Laplacian $(n-2)/2$ times.

\begin{corollary}\label{cor:GFTnonodd}
Let $n=\dim M-1>2$ be even. Let $\OO\subseteq M$ be open and axially symmetric.  If $f\in\F_{[n]}(\OO)=\sr(\OO)$ is a slice-regular function, then the iterated Laplacian $\Delta_M^{\frac{n-2}2}f$ is a slice function on $\OO$ in the space $\F_{\PP'}(\OO)$, where $\PP'=\{\{1\},\{2\},\ldots,\{n-1,n\}\}$.  
\hfill\qed
\end{corollary}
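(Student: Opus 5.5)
The plan is to iterate Theorem \ref{teo:DeltaM} exactly $\frac{n-2}2$ times, starting from the partition $\PP_0=\{[n]\}$. At each step I remove two elements from the unique non-singleton block and turn them into singletons. For $k=0,1,\ldots,\frac{n-2}2$ set
\[
\PP_k=\{\{1\},\{2\},\ldots,\{2k\},\{2k+1,\ldots,n\}\},
\]
so that $\PP_{(n-2)/2}=\PP'$. The claim to prove by induction on $k$ is $\Delta_M^k f\in\F_{\PP_k}(\OO)$. The base case $k=0$ is $f\in\F_{[n]}(\OO)=\sr(\OO)$, by Examples \ref{ex:FP}(ii).

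Before the induction I would settle two preliminary points.

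\begin{itemize}
\item \emph{Circularity.} $\OO$ must be $\PP_k$-circular for every $k$, so that Theorem \ref{teo:DeltaM} applies. Singletons impose only invariance under a single reflection. For the block $A=\{2k+1,\ldots,n\}$, take $x\in\OO$ and replace $\x_A=\beta I$ by $\beta J$ with $J\in\s_{M_A}$. This leaves $x_0$ and $\|\x\|$ unchanged, so the new point lies on the same sphere $\alpha+\|\x\|\s_M$ as $x$. Axial symmetry of $\OO$ then gives that the new point is in $\OO$, which is $A$-circularity.
\item \emph{Regularity.} Theorem \ref{teo:DeltaM} and Corollary \ref{cor:Pslice} use $C^2$/$C^3$ hypotheses. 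Every function produced lies in the kernel of some $D_\PP$ and of $\Delta_M$-type operators, and slice-regular functions are real analytic. So I would note at the outset that $f$, hence every $\Delta_M^k f$, is smooth on $\OO$.
\end{itemize}

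For the inductive step, assume $g=\Delta_M^k f\in\F_{\PP_k}(\OO)$ with $k<\frac{n-2}2$. The blocks of $\PP_k$ have cardinality $1$ or $n-2k\ge4$, so none has cardinality $2$. There are $\ell=2k+1$ blocks, so the weight is $\kappa=(2k+1-n)/2<0$ because $n$ is even and $2k+1<n$. The hypotheses of Theorem \ref{teo:DeltaM} therefore hold. Exactly one block has cardinality greater than $2$, so the sum in the theorem reduces to a single term, $\Delta_M g=g_1\in\F_{\PP_{k,1}}(\OO)$. By Remark \ref{rem:Pj}, the two elements deleted from $A=\{2k+1,\ldots,n\}$ may be chosen freely. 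Choosing $2k+1$ and $2k+2$ gives $\PP_{k,1}=\PP_{k+1}$, which completes the induction.

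The induction stops at $k=\frac{n-2}2$, where the last block $\{n-1,n\}$ has cardinality $2$ and Theorem \ref{teo:DeltaM} no longer applies; this is exactly the claimed final space $\F_{\PP'}(\OO)$.

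It remains to show that $\Delta_M^{(n-2)/2}f$ is a slice function, i.e.\ lies in $\ker\mathscr S_{[n]}$. Here I use Corollary \ref{cor:Pslice} with $\PP=\{[n]\}$: the Laplacian of a smooth $\{[n]\}$-slice function is again $\{[n]\}$-slice. Since $f$ is slice, induction on the power gives the claim. Alternatively, the "moreover" clause of Theorem \ref{teo:DeltaM} propagates sliceness along the same chain. Slice functions in the sense of Theorem \ref{teo:C1_sliceness} coincide with $\ker\mathscr S_{[n]}$, so this is the required sliceness.

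The main obstacle I expect is a careful check of Remark \ref{rem:Pj}. It must hold that the two singletons can be taken to be any two elements of the block, and that Theorem \ref{teo:DeltaM}'s conclusion is not tied to a particular ordering of $A_j$. Otherwise the resulting $\PP_{k+1}$ would only be equivalent to the stated one, through the permutation classification of Theorem \ref{teo:classification2}, which changes the basis $\B$.
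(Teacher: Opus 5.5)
Your proof is correct and follows the paper's argument: iterate Theorem \ref{teo:DeltaM} from $\{[n]\}$, at each step splitting off two singletons from the unique large block (the theorem allows any two elements, since $i_1=\alpha_j$ and $i_2,i_3$ are arbitrary in its proof), until the block $\{n-1,n\}$ of cardinality two appears; sliceness of $\Delta_M^{(n-2)/2}f$ then follows from Corollary \ref{cor:Pslice} with $\PP=\{[n]\}$. The one slip is your ``alternative'' via the moreover clause: at step $k\ge1$ it only gives $\ker\mathscr S_{\PP_k}=\ker\mathscr S_{\{2k+1,\ldots,n\}}$, which is weaker than $\ker\mathscr S_{[n]}$, so the sliceness claim must rest on Corollary \ref{cor:Pslice}.
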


\begin{example}
Let $M=\R^5\subseteq\R_4$ be the paravector space. Since $x^4\in\sr(M)$, then 
\[
\Delta_M (x^4)=-12(3x_0^2-\|\IM(x)\|^2+2x_0\IM(x))
\]
is 
a  Dunkl-regular slice function in the space $\F_{\{1\},\{2\},\{3,4\}}(M)$, in the kernel of the operator (see Remark \ref{rem:DM_delta1})
\[
D_{\{1\},\{2\},\{3,4\}}=\dM-\tfrac12e_4\delta_1^4=\dM-\tfrac12e_4\frac{1-r_4}{x_4}.
\]
The function $\Delta_M(x^4)$ is then Dunkl harmonic on $M$ but not harmonic, since $\Delta_M^2(x^4)=24$.  
\end{example}

\begin{proof}[Proof of Theorem \ref{teo:DeltaM}]
Let $\B=\{1,v_1,\ldots,v_n\}$ be a fixed hypercomplex basis of $M$, with real coordinates $x_0,x_1,\ldots,x_n$.  Let $\delta_1,\delta_2$ be the operators defined in \eqref{eq:delta12}.

Let $f\in\F_\PP(\OO)$ and let $\alpha=(\alpha_1,\ldots,\alpha_\ell)$, with $\alpha_j$ an element of $A_j$ for every $j$. Let $(k_1,\ldots,k_n)$ be the multiplicities defined as in \eqref{eq:canonical}. Let $A_j=\{i_1=\alpha_j,i_2,\ldots,i_{|A_j|}\}$. 
Then $k_{i_1}=k_{\alpha_j}=0$ and $k_{i_2}=\cdots k_{i_{|A_j|}}=-1/2$. 

In view of Proposition \ref{pro:DeltaM}, it is sufficient to prove that for every $j\in\{1,\ldots,\ell\}$ with $|A_j|>2$, the function $\delta_2^{\alpha_j}f$ belongs to the space $\F_{\PP_j}(\OO)$, where $\PP_j=(\PP\setminus A_j)\cup \{\{\alpha_j\},\{i_2\},\{i_3,\ldots,i_{|A_j|}\}\}$ is a new partition of $[n]$ with $\ell+2$ elements.

In order to prove that $\delta_2^{\alpha_j}f\in\ker D_{\PP_j}$, we show that $D_{\PP_j}(\delta_2^{\alpha_j}f)=\delta_2^{\alpha_j}(D_{\PP}f)$. Let $(k'_1,\ldots,k'_n)$ be the $\PP_j$-admissible sequence obtained from $(k_1,\ldots,k_n)$ replacing the elements $k_{i_2}=-1/2$ and $k_{i_3}=-1/2$ with two zeros: $k'_{i_2}=k'_{i_3}=0$, $k'_i=k_i=-1/2$ for every $i\ne i_2,i_3$.  From Remark \ref{rem:DM_delta1} we can write
\[
D_{\PP_j}=\dM-\tfrac12\sum_{\substack{i=1\\i\not\in\alpha\cup\{i_2,i_3\}}}^n v_i\delta_1^i.
\]
Therefore
\[
D_\PP-D_{\PP_j}=-\tfrac12\left(v_{i_2}\delta_1^{i_2}+v_{i_3}\delta_1^{i_3}\right). 
\]
Since $[\delta_1^i,\delta_2^j]=0=[\partial_{x_i},\delta_2^j]$ for every $j\ne i$ and $\alpha_j=i_1$, we can write
\[
D_{\PP_j}(\delta_2^{\alpha_j}f)=\bigg(\dM-\tfrac12\sum_{\substack{i=1\\i\not\in\alpha\cup\{i_2,i_3\}}}^n  v_i\delta_1^i\bigg)(\delta_2^{\alpha_j}f)=
\delta_2^{\alpha_j}\bigg(\dM f-\tfrac12\sum_{\substack{i=1\\i\not\in\alpha\cup\{i_2,i_3\}}}^n  v_i\delta_1^if\bigg)+v_{i_1}[\partial_{x_{i_1}},\delta_2^{i_1}]f.
\]
From Proposition \ref{pro:delta1_2slice} it holds $v_{i_1}\delta_1^{i_1}f=v_{i_2}\delta_1^{i_2}f=v_{i_3}\delta_1^{i_3}f$, since $i_1,i_2,i_3\in A_j$, and from Proposition \ref{pro:delta12} we get
\[
[\partial_{x_{i_1}},\delta_2^{i_1}]f=-\delta_2^{i_1}(\delta_1^{i_1}f).
\]
Then
\[
D_{\PP_j}(\delta_2^{\alpha_j}f)=\delta_2^{\alpha_j}\bigg(\dM f-\tfrac12\sum_{\substack{i=1\\i\not\in\alpha\cup\{i_2,i_3\}}}^n  v_i\delta_1^if\bigg)-\tfrac12 \delta_2^{\alpha_j}(v_{i_2}\delta_1^{i_2}f+v_{i_3}\delta_1^{i_3}f)
=\delta_2^{\alpha_j}(D_\PP f).
\]
From Corollary \ref{cor:delta1_2Pslice}, $\delta_2^{\alpha_j} f\in\ker\mathscr S_\PP$, and then it also belongs to $\ker\mathscr S_{\PP_j}$ (see \cite[Proposition 49]{BinosiPerotti2025}). It follows that $\delta_2^{\alpha_j} f$ is a $\PP$-slice function in the space $\F_{\PP_j}(\OO)$. 

We conclude setting $g_j:=2^{-1}(1-|A_j|)\delta_2^{\alpha_j} f$ for any $j=1,\ldots,\ell$. The functions $g_j$ have Dunkl weight $2^{-1}\sum_{i\ne j}^\ell(1-|A_i|)+2^{-1}(1-(|A_j|-2))=\kappa+1$, as required.
\end{proof}

\begin{example}
Let $M$ be the eight-dimensional hypercomplex subspace 
\[
M=\langle1,e_1,e_2,e_3,e_4,e_5,e_6,e_{123456}\rangle
\]
of $\R_6$. Let $\PP=\{A_1,A_2,A_3\}$, with $A_1=\{1\}$, $A_2=\{2,3,4\}$, $A_3=\{5,6,7\}$. Consider the polynomial $f=P_{(1,2,2)}\in\F_\PP(M)$ of Example \ref{ex:P122}:
\begin{align*}
f&=\tfrac{1}{15}x_0^5+
\tfrac{1}3x_0^4 \x_{A_1}-
\tfrac{1}3x_0^3\left(2\x_{A_1}\big(\x_{A_2}+\x_{A_3}\big)+\big(\|\x_{A_2}\|^2+\|\x_{A_3}\|^2\big)\right)-
x_0^2 \x_{A_1}\big(\|\x_{A_2}\|^2+\|\x_{A_3}\|^2\big)\\
&\quad +x_0\left(2\x_{A_1}\big(\x_{A_2}\|\x_{A_3}\|^2+\x_{A_3}\|\x_{A_2}\|^2\big)+\|\x_{A_2}\|^2\|\x_{A_3}\|^2 \right)
+\x_{A_1} \|\x_{A_2}\|^2\|\x_{A_3}\|^2 .
\end{align*}
Through the operators $\delta_1^{i}$ we compute the spherical derivatives $f'_{s,A_j}$ for every $j$ with $|A_j|>1$:
\[
f'_{s,A_2}=\frac23x_0^3x_1e_1-2x_0x_1e_1\|\x_{A_3}\|^2,\quad f'_{s,A_3}=\frac23x_0^3x_1e_1-2x_0x_1e_1\|\x_{A_2}\|^2,
\]
and then the polynomial $\dM f$:
\[
\dM f=-2f'_{s,A_2}-2f'_{s,A_3}=-\frac83x_0^3x_1e_1+4x_0x_1e_1\big(\|\x_{A_2}\|^2+\|\x_{A_3}\|^2\big).
\]
From this we get that $\Delta_M(\dM f)=32x_0x_1e_1$ and then $\Delta_M^2(\dM f)=\dM(\Delta_M^2f)=0$, according to Corollary \ref{cor:GFT}. In order to compute the monogenic function $\Delta_M^2f$, we arrive at the same result through a longer way, firstly computing the Laplacian $\Delta_Mf$ and then its square $\Delta_M^2 f$ using the operators $\delta_2^{j}$ and Theorem \ref{teo:DeltaM}. The polynomial $f$ is a Dunkl-regular function of weight $-2$. 
We construct two Dunkl-regular functions of weight $-1$: 
\begin{align*}
g_2&=-\delta_2^{2}f=-\frac43 x_0^3-4x_0^2 x_1e_1+4x_0\big(2x_1e_1\x_{A_3}+\|\x_{A_3}\|^2\big)+4x_1e_1\|\x_{A_3}\|^2\in\F_{\PP_2}(M),\\
g_3&=-\delta_2^{5}f=-\frac43 x_0^3-4x_0^2 x_1e_1+4x_0\big(2x_1e_1\x_{A_2}+\|\x_{A_2}\|^2\big)+4x_1e_1\|\x_{A_2}\|^2\in\F_{\PP_3}(M),
\end{align*}
where $\PP_2=\{\{1\},\{2\},\{3\},\{4\},\{5,6,7\}\}$, $\PP_3=\{\{1\},\{2,3,4\},\{5\},\{6\},\{7\}\}$. It follows that
\begin{align*}
\Delta_M f&=g_2+g_3\\
&=-\frac83 x_0^3-8x_0^2 x_1e_1+4x_0\left(2x_1e_1\big(\x_{A_2}+\x_{A_3}\big)+\big(\|\x_{A_2}\|^2+\|\x_{A_3}\|^2\big)\right)+4x_1e_1\big(\|\x_{A_2}\|^2+\|\x_{A_3}\|^2\big).
\end{align*}
Repeating the procedure provided by Theorem \ref{teo:DeltaM} on $g_2$ and $g_3$, we obtain the monogenic polynomials (Dunkl-regular functions of weight 0)
\[
\Delta_M g_2=-\delta_2^{5}g_2=16(x_0+x_1 e_1), \quad \Delta_M g_3=-\delta_2^{2}g_3=16(x_0+x_1 e_1).
\]
Therefore $\Delta_M^2 f=\Delta_M g_2+\Delta_M g_3=32(x_0+x_1 e_1)$ is a monogenic function on $M$, in accordance with Corollary \ref{cor:GFT}.
\end{example}

\begin{example}
Let $M=\oo$ be the algebra of octonions. Since $x^3\in\sr(\oo)$, its Laplacian $\Delta_8(x^3)$ is a slice polynomial in the space $\F_\PP(\OO)$, where $\PP=\{\{1\},\{2\},\{3,4,5,6,7\}\}$, in accordance with Theorem \ref{teo:DeltaM}. As observed in \cite[\S4.3]{BinosiPerotti2025}, the slice functions in this space are slice Fueter-regular functions \cite{JinRenSabadini2020,SliceFueterRegular}. 
A direct computation shows that
\[
\Delta_8(x^3)=-12\left(3x_0+\IM(x)\right).
\]
In particular, $\Delta_8(x^3)$ is a Dunkl monogenic function of weight $-2$, in the kernel of $D_\PP=\dibar_\oo-\tfrac12\sum_{i=4}^7\frac{v_i}{x_i}(1-r_i)$.
\end{example}

If every set $A_j\in\PP$ has cardinality $|A_j|\ne2$, we can define, for $j=1,\ldots,\ell$, the operators $\Tau_j:\F_\PP(\OO)\to\F_{\PP_j}(\OO)$  as
\[
\Tau_j(f)=\tfrac{1-|A_j|}2\delta_2^{\alpha_j} f,
\]
where $\alpha_j$ is any element of $A_j$. We also set $\Tau=(\Tau_1,\ldots,\Tau_\ell):\F_\PP(\OO)\to(\F_{\PP_1}(\OO),\ldots,\F_{\PP_\ell}(\OO))$. 
Theorem \ref{teo:DeltaM} proves that for every $f\in\F_\PP(\OO)$, it holds
\[
\Delta_M f=\sum_{j=1}^\ell\Tau_j(f).
\]
If $\PP$ is an odd partition and $\ell<n$, we can iterate the operator $\Tau$ and obtain a directed rooted tree, the \emph{Fueter tree} of the space $\F_\PP(\OO)$. It is a tree with root $\F_\PP(\OO)$, nodes some spaces $\F_{\PP'}(\OO)$ with $\PP'$ a refinement of $\PP$, and leaves all equal to the space of monogenic functions $\M(\OO)$. The Fueter tree has height $|\kappa|=(n-\ell)/2$, where $\kappa<0$ is the Dunkl weight of the elements of $\F_\PP(\OO)$. 
If $|A_j|=1$, $\Tau_j\equiv0$ and we can omit the corresponding edge and child node. Therefore the Fueter tree of $\F_\PP(\OO)$ is a $\ell'$-ary tree, where $\ell'$ denotes the number of elements $A_j\in\PP$ with cardinality greater than one, i.e., there are at most $\ell'$ non-zero operators $\Tau_j$ with origin any node of the tree (see Figure \ref{figureTree}).

Observe that for every $f\in\F_\PP(\OO)$ and $i=1,\ldots,|\kappa|$, the functions obtained iterating $\Tau$ and the iterated Laplacian $\Delta^i_M f$ are $\PP$-slice on $\OO$. In particular, $\Delta^{\frac{n-\ell}2}_{M}f\in\M(\OO)\cap\ker\mathscr S_\PP$, as stated in Corollary \ref{cor:GFT}.  
Then every Fueter tree provides a version of the Fueter Theorem. In particular we subsume all the already known Fueter-type Theorems. 

\begin{figure}[h]
\tikzset{every picture/.style={line width=0.75pt}} 
\begin{center}
\begin{tikzpicture}[x=0.75pt,y=0.75pt,yscale=-1,xscale=0.9]

\draw    (301.1,30.28) -- (147.77,131.18) ;
\draw [shift={(146.1,132.28)}, rotate = 326.65] [color={rgb, 255:red, 0; green, 0; blue, 0 }  ][line width=0.75]    (10.93,-3.29) .. controls (6.95,-1.4) and (3.31,-0.3) .. (0,0) .. controls (3.31,0.3) and (6.95,1.4) .. (10.93,3.29)   ;
\draw    (301.1,30.28) -- (455.31,127.93) ;
\draw [shift={(457,129)}, rotate = 212.34] [color={rgb, 255:red, 0; green, 0; blue, 0 }  ][line width=0.75]    (10.93,-3.29) .. controls (6.95,-1.4) and (3.31,-0.3) .. (0,0) .. controls (3.31,0.3) and (6.95,1.4) .. (10.93,3.29)   ;
\draw    (301.1,30.28) -- (258.29,132.71) ;
\draw [shift={(257.52,134.55)}, rotate = 292.68] [color={rgb, 255:red, 0; green, 0; blue, 0 }  ][line width=0.75]    (10.93,-3.29) .. controls (6.95,-1.4) and (3.31,-0.3) .. (0,0) .. controls (3.31,0.3) and (6.95,1.4) .. (10.93,3.29)   ;
\draw    (146.1,132.28) -- (85.99,219.57) ;
\draw [shift={(84.86,221.22)}, rotate = 304.55] [color={rgb, 255:red, 0; green, 0; blue, 0 }  ][line width=0.75]    (10.93,-3.29) .. controls (6.95,-1.4) and (3.31,-0.3) .. (0,0) .. controls (3.31,0.3) and (6.95,1.4) .. (10.93,3.29)   ;
\draw    (146.1,132.28) -- (182.08,218.04) ;
\draw [shift={(182.86,219.88)}, rotate = 247.24] [color={rgb, 255:red, 0; green, 0; blue, 0 }  ][line width=0.75]    (10.93,-3.29) .. controls (6.95,-1.4) and (3.31,-0.3) .. (0,0) .. controls (3.31,0.3) and (6.95,1.4) .. (10.93,3.29)   ;
\draw    (257.52,134.55) -- (228.82,221.33) ;
\draw [shift={(228.19,223.23)}, rotate = 288.3] [color={rgb, 255:red, 0; green, 0; blue, 0 }  ][line width=0.75]    (10.93,-3.29) .. controls (6.95,-1.4) and (3.31,-0.3) .. (0,0) .. controls (3.31,0.3) and (6.95,1.4) .. (10.93,3.29)   ;
\draw    (257.52,134.55) -- (296.06,220.3) ;
\draw [shift={(296.88,222.13)}, rotate = 245.8] [color={rgb, 255:red, 0; green, 0; blue, 0 }  ][line width=0.75]    (10.93,-3.29) .. controls (6.95,-1.4) and (3.31,-0.3) .. (0,0) .. controls (3.31,0.3) and (6.95,1.4) .. (10.93,3.29)   ;
\draw    (457,129) -- (434.67,220.07) ;
\draw [shift={(434.19,222.02)}, rotate = 283.78] [color={rgb, 255:red, 0; green, 0; blue, 0 }  ][line width=0.75]    (10.93,-3.29) .. controls (6.95,-1.4) and (3.31,-0.3) .. (0,0) .. controls (3.31,0.3) and (6.95,1.4) .. (10.93,3.29)   ;
\draw    (457,129) -- (490.8,216.69) ;
\draw [shift={(491.52,218.55)}, rotate = 248.92] [color={rgb, 255:red, 0; green, 0; blue, 0 }  ][line width=0.75]    (10.93,-3.29) .. controls (6.95,-1.4) and (3.31,-0.3) .. (0,0) .. controls (3.31,0.3) and (6.95,1.4) .. (10.93,3.29)   ;
\draw  [dash pattern={on 4.5pt off 4.5pt}]  (301.1,30.28) -- (351.3,132.1) ;
\draw [shift={(352.19,133.9)}, rotate = 243.76] [color={rgb, 255:red, 0; green, 0; blue, 0 }  ][line width=0.75]    (10.93,-3.29) .. controls (6.95,-1.4) and (3.31,-0.3) .. (0,0) .. controls (3.31,0.3) and (6.95,1.4) .. (10.93,3.29)   ;
\draw  [dash pattern={on 4.5pt off 4.5pt}]  (84.86,221.22) -- (84.2,326.68) ;
\draw [shift={(84.19,328.68)}, rotate = 270.36] [color={rgb, 255:red, 0; green, 0; blue, 0 }  ][line width=0.75]    (10.93,-3.29) .. controls (6.95,-1.4) and (3.31,-0.3) .. (0,0) .. controls (3.31,0.3) and (6.95,1.4) .. (10.93,3.29)   ;
\draw  [dash pattern={on 4.5pt off 4.5pt}]  (182.86,219.88) -- (182.86,326.68) ;
\draw [shift={(182.86,328.68)}, rotate = 270] [color={rgb, 255:red, 0; green, 0; blue, 0 }  ][line width=0.75]    (10.93,-3.29) .. controls (6.95,-1.4) and (3.31,-0.3) .. (0,0) .. controls (3.31,0.3) and (6.95,1.4) .. (10.93,3.29)   ;
\draw  [dash pattern={on 4.5pt off 4.5pt}]  (228.19,223.23) -- (228.05,327.29) ;
\draw [shift={(228.05,329.29)}, rotate = 270.08] [color={rgb, 255:red, 0; green, 0; blue, 0 }  ][line width=0.75]    (10.93,-3.29) .. controls (6.95,-1.4) and (3.31,-0.3) .. (0,0) .. controls (3.31,0.3) and (6.95,1.4) .. (10.93,3.29)   ;
\draw  [dash pattern={on 4.5pt off 4.5pt}]  (296.88,222.13) -- (298.84,326.92) ;
\draw [shift={(298.88,328.91)}, rotate = 268.93] [color={rgb, 255:red, 0; green, 0; blue, 0 }  ][line width=0.75]    (10.93,-3.29) .. controls (6.95,-1.4) and (3.31,-0.3) .. (0,0) .. controls (3.31,0.3) and (6.95,1.4) .. (10.93,3.29)   ;
\draw  [dash pattern={on 4.5pt off 4.5pt}]  (352.19,133.9) -- (333.94,220.06) ;
\draw [shift={(333.52,222.02)}, rotate = 281.96] [color={rgb, 255:red, 0; green, 0; blue, 0 }  ][line width=0.75]    (10.93,-3.29) .. controls (6.95,-1.4) and (3.31,-0.3) .. (0,0) .. controls (3.31,0.3) and (6.95,1.4) .. (10.93,3.29)   ;
\draw  [dash pattern={on 4.5pt off 4.5pt}]  (352.19,133.9) -- (360.66,220.03) ;
\draw [shift={(360.86,222.02)}, rotate = 264.38] [color={rgb, 255:red, 0; green, 0; blue, 0 }  ][line width=0.75]    (10.93,-3.29) .. controls (6.95,-1.4) and (3.31,-0.3) .. (0,0) .. controls (3.31,0.3) and (6.95,1.4) .. (10.93,3.29)   ;
\draw  [dash pattern={on 4.5pt off 4.5pt}]  (352.19,133.9) -- (388.09,220.17) ;
\draw [shift={(388.86,222.02)}, rotate = 247.41] [color={rgb, 255:red, 0; green, 0; blue, 0 }  ][line width=0.75]    (10.93,-3.29) .. controls (6.95,-1.4) and (3.31,-0.3) .. (0,0) .. controls (3.31,0.3) and (6.95,1.4) .. (10.93,3.29)   ;
\draw  [dash pattern={on 4.5pt off 4.5pt}]  (360.86,222.02) -- (362.16,326.02) ;
\draw [shift={(362.19,328.02)}, rotate = 269.28] [color={rgb, 255:red, 0; green, 0; blue, 0 }  ][line width=0.75]    (10.93,-3.29) .. controls (6.95,-1.4) and (3.31,-0.3) .. (0,0) .. controls (3.31,0.3) and (6.95,1.4) .. (10.93,3.29)   ;
\draw  [dash pattern={on 4.5pt off 4.5pt}]  (388.86,222.02) -- (389.98,328) ;
\draw [shift={(390,330)}, rotate = 269.39] [color={rgb, 255:red, 0; green, 0; blue, 0 }  ][line width=0.75]    (10.93,-3.29) .. controls (6.95,-1.4) and (3.31,-0.3) .. (0,0) .. controls (3.31,0.3) and (6.95,1.4) .. (10.93,3.29)   ;
\draw  [dash pattern={on 4.5pt off 4.5pt}]  (146.1,132.28) -- (129.43,219.83) ;
\draw [shift={(129.05,221.79)}, rotate = 280.79] [color={rgb, 255:red, 0; green, 0; blue, 0 }  ][line width=0.75]    (10.93,-3.29) .. controls (6.95,-1.4) and (3.31,-0.3) .. (0,0) .. controls (3.31,0.3) and (6.95,1.4) .. (10.93,3.29)   ;
\draw  [dash pattern={on 4.5pt off 4.5pt}]  (257.52,134.55) -- (259.02,219.29) ;
\draw [shift={(259.05,221.29)}, rotate = 268.99] [color={rgb, 255:red, 0; green, 0; blue, 0 }  ][line width=0.75]    (10.93,-3.29) .. controls (6.95,-1.4) and (3.31,-0.3) .. (0,0) .. controls (3.31,0.3) and (6.95,1.4) .. (10.93,3.29)   ;
\draw  [dash pattern={on 4.5pt off 4.5pt}]  (457,129) -- (461.94,219.79) ;
\draw [shift={(462.05,221.79)}, rotate = 266.88] [color={rgb, 255:red, 0; green, 0; blue, 0 }  ][line width=0.75]    (10.93,-3.29) .. controls (6.95,-1.4) and (3.31,-0.3) .. (0,0) .. controls (3.31,0.3) and (6.95,1.4) .. (10.93,3.29)   ;
\draw  [dash pattern={on 4.5pt off 4.5pt}]  (129.05,221.79) -- (128.4,327.26) ;
\draw [shift={(128.38,329.26)}, rotate = 270.36] [color={rgb, 255:red, 0; green, 0; blue, 0 }  ][line width=0.75]    (10.93,-3.29) .. controls (6.95,-1.4) and (3.31,-0.3) .. (0,0) .. controls (3.31,0.3) and (6.95,1.4) .. (10.93,3.29)   ;
\draw  [dash pattern={on 4.5pt off 4.5pt}]  (259.05,221.29) -- (258.4,326.76) ;
\draw [shift={(258.38,328.76)}, rotate = 270.36] [color={rgb, 255:red, 0; green, 0; blue, 0 }  ][line width=0.75]    (10.93,-3.29) .. controls (6.95,-1.4) and (3.31,-0.3) .. (0,0) .. controls (3.31,0.3) and (6.95,1.4) .. (10.93,3.29)   ;
\draw  [dash pattern={on 4.5pt off 4.5pt}]  (462.05,221.79) -- (462.54,325.79) ;
\draw [shift={(462.55,327.79)}, rotate = 269.73] [color={rgb, 255:red, 0; green, 0; blue, 0 }  ][line width=0.75]    (10.93,-3.29) .. controls (6.95,-1.4) and (3.31,-0.3) .. (0,0) .. controls (3.31,0.3) and (6.95,1.4) .. (10.93,3.29)   ;
\draw  [dash pattern={on 4.5pt off 4.5pt}]  (434.69,218.52) -- (435.81,324.5) ;
\draw [shift={(435.83,326.5)}, rotate = 269.39] [color={rgb, 255:red, 0; green, 0; blue, 0 }  ][line width=0.75]    (10.93,-3.29) .. controls (6.95,-1.4) and (3.31,-0.3) .. (0,0) .. controls (3.31,0.3) and (6.95,1.4) .. (10.93,3.29)   ;
\draw  [dash pattern={on 4.5pt off 4.5pt}]  (491.52,218.55) -- (492.65,324.53) ;
\draw [shift={(492.67,326.53)}, rotate = 269.39] [color={rgb, 255:red, 0; green, 0; blue, 0 }  ][line width=0.75]    (10.93,-3.29) .. controls (6.95,-1.4) and (3.31,-0.3) .. (0,0) .. controls (3.31,0.3) and (6.95,1.4) .. (10.93,3.29)   ;
\draw  [dash pattern={on 4.5pt off 4.5pt}]  (333.52,222.02) -- (334.65,328) ;
\draw [shift={(334.67,330)}, rotate = 269.39] [color={rgb, 255:red, 0; green, 0; blue, 0 }  ][line width=0.75]    (10.93,-3.29) .. controls (6.95,-1.4) and (3.31,-0.3) .. (0,0) .. controls (3.31,0.3) and (6.95,1.4) .. (10.93,3.29)   ;

\draw (280,10) node [anchor=north west][inner sep=0.75pt]   [align=left] {$\F_\PP(\OO)$};
\draw (90,120) node [anchor=north west][inner sep=0.75pt]   [align=left] {$\F_{\PP_1}(\OO)$};
\draw (200,120) node [anchor=north west][inner sep=0.75pt]   [align=left] {$\F_{\PP_2}(\OO)$};
\draw (290,120) node [anchor=north west][inner sep=0.75pt]   [align=left] {$\F_{\PP_j}(\OO)$};
\draw (390,120) node [anchor=north west][inner sep=0.75pt]   [align=left] {$\F_{\PP_\ell}(\OO)$};
\draw (500,115) node [anchor=north west][inner sep=0.75pt]   [align=left] {$\Delta_{M} =\sum_{j=1}^\ell \Tau_j$};
\draw (7,325) node [anchor=north west][inner sep=0.75pt]   [align=left] {$\kappa=0$};
\draw (6.5,209.5) node [anchor=north west][inner sep=0.75pt]   [align=left] {$\kappa+2$};
\draw (6.5,122.5) node [anchor=north west][inner sep=0.75pt]   [align=left] {$\kappa+1$};
\draw (7.5,26.5) node [anchor=north west][inner sep=0.75pt]   [align=left] {$\kappa=\frac{\ell-n}2<0$};
\draw (500,202.17) node [anchor=north west][inner sep=0.75pt]   [align=left]{$\Delta^2_{M} =\sum_{i,j=1}^\ell \Tau_{i}\Tau_{j}$};
\draw (550,320) node [anchor=north west][inner sep=0.75pt]   [align=left] {$\Delta^{\frac{n-\ell}2}_{M}$};
\draw (198.5,64) node [anchor=north west][inner sep=0.75pt]   [align=left] {$\Tau_1$};
\draw (257,77) node [anchor=north west][inner sep=0.75pt]   [align=left] {$\Tau_2$};
\draw (330,77) node [anchor=north west][inner sep=0.75pt]   [align=left] {$\Tau_j$};
\draw (385,61.5) node [anchor=north west][inner sep=0.75pt]   [align=left] {$\Tau_\ell$};
\draw (85,164.5) node [anchor=north west][inner sep=0.75pt]   [align=left] {$\Tau_{1,1}$};
\draw (482.5,162) node [anchor=north west][inner sep=0.75pt]   [align=left] {$\Tau_{\ell,\ell+2}$};
\draw (20,257.17) node [anchor=north west][inner sep=0.75pt]   [align=left] {$\displaystyle \vdots $};
\draw (563.5,256.67) node [anchor=north west][inner sep=0.75pt]   [align=left] {$\displaystyle \vdots $};

\draw (60,330) node [anchor=north west][inner sep=0.75pt]   [align=left] {$\M(\OO)$};
\draw (120,337) node [anchor=north west][inner sep=0.75pt]   [align=left] {$\cdots$};
\draw (160,330) node [anchor=north west][inner sep=0.75pt]   [align=left] {$\M(\OO)$};
\draw (230,337) node [anchor=north west][inner sep=0.75pt]   [align=left] {$\cdots\cdots\cdots$};
\draw (330,337) node [anchor=north west][inner sep=0.75pt]   [align=left] {$\cdots\cdots\cdots$};
\draw (430,337) node [anchor=north west][inner sep=0.75pt]   [align=left] {$\cdots\cdots$};

\draw (475,330) node [anchor=north west][inner sep=0.75pt]   [align=left] {$\M(\OO)$};

\end{tikzpicture}
\end{center}

\caption{The \emph{Fueter tree} of $\F_\PP(\OO)$. 
The tree has root $\F_\PP(\OO)$, nodes some spaces $\F_{\PP'}(\OO)$ and leaves $\M(\OO)$.  
}
\label{figureTree}
\end{figure}

\begin{remark}\label{rem:qn}
The number of distinct Fueter trees on $M$ (up to equivalence) is equal to the number of non-equivalent roots $\F_\PP(\OO)$, with $\PP$ an odd partition. This is the number of \emph{partitions in odd parts} of $n$, usually denoted by $q(n)$. Excluding the trivial tree with root and leaf $\M(\OO)$, there are, up to equivalence, $q(n)-1$ distinct Fueter trees. Therefore there are $q(n)-1$ Fueter Theorems on $M$, where $n=\dim M-1$. The function $q(n)$ is tabulated in Table \ref{table} for small values of $n$.
\end{remark}

\subsection{Examples}\label{sec:Examples}

\subsubsection{Quaternions}
Since $q(3)=2$ for $M=\hh$, there is only one non-trivial Fueter tree, corresponding to the slice-regular  version of the classical Fueter Theorem on $\hh$ \cite{Fueter1934}:
\[
\Tau=\Delta_4:\sr(\OO)=\F_{\{1,2,3\}}(\OO)\to \ker\dibar_\hh=\F_{\{1\},\{2\},\{3\}}(\OO),
\]
with image $\Tau(\sr(\OO))\subset\ker\dibar_\hh\cap\SL(\OO)$ contained in the subspace of monogenic slice functions, also called axially monogenic quaternionic functions.

\subsubsection{Clifford algebras $\rr_n$}
Let $\OO\subseteq M=\R^{n+1}\subset\R_n$. If $n$ is odd, there is a unary Fueter tree with root $\mathcal{SM}(\OO)$, containing $(n-1)/2$ other unary Fueter trees (see Figure \ref{figureRn}). It correspond to the Fueter-Sce Theorem \cite{Sce}, stating that $\Delta_{M}^{\frac{n-1}2}f$ is axially monogenic for every slice-monogenic function $f$ on $\OO$. As intermediate nodes of this Fueter tree we can take the spaces $\F_{\{1\},\ldots,\{p\},\{p+1,\ldots,n\}}(\OO)$, with $p$ even. They are spaces of generalized partial-slice monogenic functions of type $(p,n-p)$. This gives a meaning to all the iterated euclidean Laplacians $\Delta^i_{M}f$ for a slice monogenic $f$: 
\[
\Delta^i_{M}f\in\F_{\{1\},\ldots,\{2i\},\{2i+1,\ldots,n\}}(\OO)\cap\ker\mathscr S_\PP\text{\quad for $i=1,\ldots,\tfrac{n-1}2$.} 
\]
The subtree with root $\F_{\{1\},\ldots,\{2i\},\{2i+1,\ldots,n\}}(\OO)$ and height $(n-2i-1)/2$ corresponds to the Fueter-Sce Theorem for generalized partial-slice monogenic functions of type $(2i,n-2i)$ recently proved in \cite[Theorem 3.8]{XuSabadiniFueterSce}.

As observed in Remark \ref{rem:Pj}, the first node in the tree with root $\sr(\OO)$ can be replaced by any space $\F_{\PP'}(\OO)$ with $\PP'=\{\{i\},\{j\},[n]\setminus\{i,j\}\}$, for any distinct $i,j\in[n]$, and similarly for the nodes below. Indeed, for any $f\in\sr(\OO)$, 
\[
\delta_2^{\ell} f\in\F_{\{1\},\{2\},\{3,\ldots,n\}}(\OO)\cap\ker\mathscr S_\PP\cap\F_{\{i\},\{j\},[n]\setminus\{i,j\}}(\OO)
\]
for any $\ell\in[n]$.

If $n\le5$, these are all (up to equivalence) the Fueter trees for the paravector space $\R^{n+1}$ of $\R_n$, since $q(3)=2$ and $q(5)=3$. When $n\ge7$, there are other Fueter trees. For example, for $n=7$ we have $q(7)=5$ while $(n+1)/2=4$. The missing tree is a binary tree of height 2 with root the space $\F_{\{1\},\{2,3,4\},\{5,6,7\}}(\OO)$, showing that $\Delta^2_M f\in\M(\OO)\cap\ker\mathscr S_{\{2,3,4\}}\cap\ker\mathscr S_{\{5,6,7\}}$ for every $f\in\F_{\{1\},\{2,3,4\},\{5,6,7\}}(\OO)$, i.e., $\Delta^2_M f$ is a \emph{biaxial monogenic function} on $\OO$. 


 \begin{figure}
     \centering
        
\tikzset{every picture/.style={line width=0.75pt}} 

\begin{tikzpicture}[x=0.75pt,y=0.75pt,yscale=-1,xscale=1]

\draw    (321.74,31.24) -- (321.74,78.24) ;
\draw [shift={(321.74,80.24)}, rotate = 270] [color={rgb, 255:red, 0; green, 0; blue, 0 }  ][line width=0.75]    (10.93,-3.29) .. controls (6.95,-1.4) and (3.31,-0.3) .. (0,0) .. controls (3.31,0.3) and (6.95,1.4) .. (10.93,3.29)   ;
\draw    (321.74,80.24) -- (321.74,127.24) ;
\draw [shift={(321.74,129.24)}, rotate = 270] [color={rgb, 255:red, 0; green, 0; blue, 0 }  ][line width=0.75]    (10.93,-3.29) .. controls (6.95,-1.4) and (3.31,-0.3) .. (0,0) .. controls (3.31,0.3) and (6.95,1.4) .. (10.93,3.29)   ;
\draw  [dash pattern={on 0.84pt off 3.55pt}]  (321.74,129.24) -- (321.74,176.24) ;
\draw  [dash pattern={on 0.84pt off 3.55pt}]  (240,140) -- (240,160) ;
\draw  [dash pattern={on 0.84pt off 3.55pt}]  (355,130) -- (355,170) ;
\draw  [dash pattern={on 0.84pt off 3.55pt}]  (450,140) -- (450,160) ;
\draw [shift={(321.74,178.24)}, rotate = 270] [color={rgb, 255:red, 0; green, 0; blue, 0 }  ][line width=0.75]    (10.93,-3.29) .. controls (6.95,-1.4) and (3.31,-0.3) .. (0,0) .. controls (3.31,0.3) and (6.95,1.4) .. (10.93,3.29)   ;
\draw    (321.74,178.24) -- (321.74,225.24) ;
\draw [shift={(321.74,227.24)}, rotate = 270] [color={rgb, 255:red, 0; green, 0; blue, 0 }  ][line width=0.75]    (10.93,-3.29) .. controls (6.95,-1.4) and (3.31,-0.3) .. (0,0) .. controls (3.31,0.3) and (6.95,1.4) .. (10.93,3.29)   ;

\draw (300,15) node [anchor=north west][inner sep=0.75pt]   [align=left] {$\mathcal{SM}(\OO)$};
\draw (350,40) node [anchor=north west][inner sep=0.75pt]   [align=left] {$\Delta$};
\draw (300,232) node [anchor=north west][inner sep=0.75pt]   [align=left] {$\mathcal{M}(\OO)$};
\draw (350,90) node [anchor=north west][inner sep=0.75pt]   [align=left] {$\Delta$};
\draw (450,90) node [anchor=north west][inner sep=0.75pt]   [align=left] {$\Delta^2$};
\draw (190,65) node [anchor=north west][inner sep=0.75pt]   [align=left] {$\F_{\{1\},\{2\},\{3,\ldots,n\}}(\OO)$};
\draw (380,65) node [anchor=north west][inner sep=0.75pt]   [align=left] {$\mathcal{GSM}(\OO)$\text{ of type $(2,n-2)$}};
\draw (175,115) node [anchor=north west][inner sep=0.75pt]   [align=left] {$\F_{\{1\},\ldots,\{4\},\{5,\ldots,n\}}(\OO)$};
\draw (380,115) node [anchor=north west][inner sep=0.75pt]   [align=left] {$\mathcal{GSM}(\OO)$\text{ of type $(4,n-4)$}};
\draw (140,165) node [anchor=north west][inner sep=0.75pt]   [align=left] {$\F_{\{1\},\{2\},\ldots,\{n-3\},\{n-2,\ldots,n\}}(\OO)$};
\draw (380,165) node [anchor=north west][inner sep=0.75pt]   [align=left] {$\mathcal{GSM}(\OO)$\text{ of type $(n-3,3)$}};
\draw (350,190) node [anchor=north west][inner sep=0.75pt]   [align=left] {$\Delta$};
\draw (450,210) node [anchor=north west][inner sep=0.75pt]   [align=left] {$\Delta^{\frac{n-1}2}$};

\end{tikzpicture}

\caption{The \emph{Fueter tree} for Clifford slice-monogenic functions is a unary tree of height $|\kappa|=(n-1)/2$.
The tree has root $\mathcal{SM}(\OO)$, internal nodes some spaces $\F_{\PP}(\OO)$ (generalized partial-slice monogenic
functions of type $(2i,n-2i)$, $i=1,\ldots,(n-3)/2$) and one leaf $\M(\OO)$. 
\label{figureRn}
}

\end{figure}
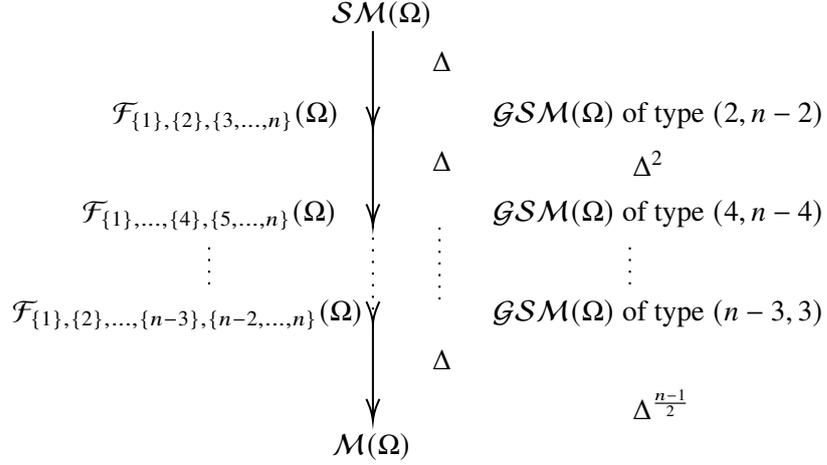

\subsubsection{Octonions}\label{sec:octonions}

There are $q(7)=5$ distinct (up to equivalence) Fueter trees on $\oo$, four of them are subtrees of the unary tree  (see Figure \ref{figureO1}) with root $\sr(\OO)$, and one is the binary tree of height 2 (see Figure \ref{figureO2}) with root $\F_\PP(\OO)$ with $\PP=\{\{1,2,3\},\{4\},\{5,6,7\}\}$ (or equivalent partitions of the set $[7]$, giving the same partition $3+1+3$ of $7$). 

This root space $\F_\PP(\OO)$ is linked to the decomposition $\oo=\hh+\ell\hh$ obtained through the Cayley-Dickson process. Let $\B=(1,i,j,k,\ell,\ell i,\ell j,\ell k)$ be the hypercomplex basis giving $\oo=\hh+\ell\hh$. The Dunkl-Cauchy-Riemann operator with multiplicities ${\bf k}=-\frac13(1,1,1,0,1,1,1)$ is
\[
D_\PP=\dibar_\oo-\tfrac13\sum_{i=1, i\ne4}^7\frac{v_i}{x_i}(1-r_i).
\]
The $\R$-linear functions $x_{\{1,2,3\}}=x_0+x_1i+x_2j+x_3k$, $x_{\{5,6,7\}}=x_0+x_5(\ell i)+x_6(\ell j)+x_7(\ell k)$ and $x_{\{4\}}=x_0+x_4\ell$ and their powers belong to $\F_\PP(\oo)$. It follows that the space $\F_{\PP}(\OO)$ contains two copies of the space of quaternionic slice-regular functions and a copy of the space of $\cc_\ell$-holomorphic functions. 

Since $q(7)-1=4$, we  have four distinct octonionic Fueter Theorems, corresponding to the maps:
\begin{itemize}
  \item[(i)] $\Delta^3:\sr(\OO)\to\M(\OO)$, with image in $\AM(\OO)$ (axially monogenic functions). This is the generalization of Dentoni-Sce Theorem \cite{DentoniSce} to slice-regular functions \cite[Theorem 27]{CRoperators}; 
  \item[(ii)] $\Delta^2:\F_{\{1\},\{2\},\{3,4,5,6,7\}}(\OO)\simeq\mathcal{GSM}(\OO)\text{ of type $(2,5)$}\to\M(\OO)$, with image in $\M(\OO)\cap\ker\mathscr S_{\{3,4,5,6,7\}}$;
  \item[(iii)] $\Delta:\F_{\{1\},\{2\},\{3\},\{4\},\{5,6,7\}}(\OO)\simeq\mathcal{GSM}(\OO)\text{ of type $(4,3)$}\to\M(\OO)$, with image in $\M(\OO)\cap\ker\mathscr S_{\{5,6,7\}}$;
  \item[(iv)] $\Delta^2:\F_{\{1,2,3\},\{4\},\{5,6,7\}}(\OO)\to\M(\OO)$, with image in $\M(\OO)\cap\ker\mathscr S_{\{1,2,3\}}\cap\ker\mathscr S_{\{5,6,7\}}$ (biaxial monogenic).
\end{itemize}
Here $\Delta=\Delta_8$ is the Euclidean Laplacian of $\rr^8$. 

As an example for the last Fueter Theorem (case (iv) of the above list), take the CK-extension $f=P_{(2,1,2)}\in\F_{\{1,2,3\},\{4\},\{5,6,7\}}(\oo)$ of the polynomial function $g=\x_{\{1,2,3\}}^2\x_4\x_{\{5,6,7\}}^2$. 
A direct computation shows that 
\begin{align*}
T_1(f)&=-\delta_2^1 f=-\tfrac43 x_0^3+4x_0\big(|\IM(x'')|^2-2x_4\IM(x'')\big)+\ell\big(4x_4(|\IM(x'')|^2-x_0^2)\big),
\\
T_3(f)&=-\delta_2^5 f=-\tfrac43 x_0^3+4x_0|\IM(x')|^2+\ell\big(4x_4(|\IM(x')|^2+2x_0\IM(x')-x_0^2\big),
\end{align*}
where $x=x'+\ell x''\in\hh+\ell\hh=\oo$. Since $T_2\equiv0$, Theorem \ref{teo:DeltaM} gives
\begin{align*}
\Delta f&=T_1(f)+T_3(f)=-\tfrac83 x_0^3+4x_0\big(|\IM(x')|^2+|\IM(x'')|^2-2x_4\IM(x'')\big)\\
&\quad + \ell\big(4x_4(|\IM(x')|^2+|\IM(x'')|^2+2x_0\IM(x')-2x_0^2)).
\end{align*}
The double Laplacian $\Delta^2 f=-\delta_2^5(T_1(f))-\delta_2^1(T_3(f))$ is the monogenic function $32 x_{\{4\}}=32(x_0+ x_4\ell)$. 

\begin{figure}
  
     \begin{minipage}{0.50\textwidth}
         \centering

\tikzset{every picture/.style={line width=0.75pt}} 

\begin{tikzpicture}[x=0.75pt,y=0.75pt,yscale=-1,xscale=1]

\draw    (334.01,50.44) -- (334.34,116.62) ;
\draw [shift={(334.35,118.62)}, rotate = 269.71] [color={rgb, 255:red, 0; green, 0; blue, 0 }  ][line width=0.75]    (10.93,-3.29) .. controls (6.95,-1.4) and (3.31,-0.3) .. (0,0) .. controls (3.31,0.3) and (6.95,1.4) .. (10.93,3.29)   ;
\draw    (334.69,186.8) -- (335.02,252.98) ;
\draw [shift={(335.03,254.98)}, rotate = 269.71] [color={rgb, 255:red, 0; green, 0; blue, 0 }  ][line width=0.75]    (10.93,-3.29) .. controls (6.95,-1.4) and (3.31,-0.3) .. (0,0) .. controls (3.31,0.3) and (6.95,1.4) .. (10.93,3.29)   ;
\draw    (334.35,118.62) -- (334.68,184.8) ;
\draw [shift={(334.69,186.8)}, rotate = 269.71] [color={rgb, 255:red, 0; green, 0; blue, 0 }  ][line width=0.75]    (10.93,-3.29) .. controls (6.95,-1.4) and (3.31,-0.3) .. (0,0) .. controls (3.31,0.3) and (6.95,1.4) .. (10.93,3.29)   ;

\draw (310,32) node [anchor=north west][inner sep=0.75pt]   [align=left] {$\sr(\OO)$};
\draw (250,100) node [anchor=north west][inner sep=0.75pt]   [align=left] {$\mathcal{GSM}(\OO)$\\\text{of type $(2,5)$}};
\draw (360,100) node [anchor=north west][inner sep=0.75pt]   [align=left] {$\Delta(\sr(\OO))$\\Slice Fueter};
\draw (315,260) node [anchor=north west][inner sep=0.75pt]   [align=left] {$\M(\OO)$};
\draw (250,172) node [anchor=north west][inner sep=0.75pt]   [align=left] {$\mathcal{GSM}(\OO)$\\\text{of type $(4,3)$}};
\draw (352,210) node [anchor=north west][inner sep=0.75pt]   [align=left] {$\Delta$};
\draw (352,140) node [anchor=north west][inner sep=0.75pt]   [align=left] {$\Delta$};
\draw (352,70) node [anchor=north west][inner sep=0.75pt]   [align=left] {$\Delta$};

\end{tikzpicture}

\caption{{The \emph{Fueter tree} for octonionic slice-regular functions is a unary tree of height 3.
The tree has root $\sr(\OO)$, one leaf $\M(\OO)$ and as first node one can take the space of Slice Fueter-regular functions.}
\label{figureO1}
}

\end{minipage}\hfill
     \begin{minipage}{0.5\textwidth}
         \centering

\tikzset{every picture/.style={line width=0.75pt}} 

\begin{tikzpicture}[x=0.75pt,y=0.75pt,yscale=-1,xscale=1]

\draw    (351.09,28.79) -- (272.57,99.45) ;
\draw [shift={(271.09,100.79)}, rotate = 318.01] [color={rgb, 255:red, 0; green, 0; blue, 0 }  ][line width=0.75]    (10.93,-3.29) .. controls (6.95,-1.4) and (3.31,-0.3) .. (0,0) .. controls (3.31,0.3) and (6.95,1.4) .. (10.93,3.29)   ;
\draw    (351.09,28.79) -- (430.27,100.12) ;
\draw [shift={(431.75,101.46)}, rotate = 222.01] [color={rgb, 255:red, 0; green, 0; blue, 0 }  ][line width=0.75]    (10.93,-3.29) .. controls (6.95,-1.4) and (3.31,-0.3) .. (0,0) .. controls (3.31,0.3) and (6.95,1.4) .. (10.93,3.29)   ;
\draw    (431.75,101.46) -- (432.41,186.79) ;
\draw [shift={(432.42,188.79)}, rotate = 269.56] [color={rgb, 255:red, 0; green, 0; blue, 0 }  ][line width=0.75]    (10.93,-3.29) .. controls (6.95,-1.4) and (3.31,-0.3) .. (0,0) .. controls (3.31,0.3) and (6.95,1.4) .. (10.93,3.29)   ;
\draw    (271.09,100.79) -- (271.74,186.13) ;
\draw [shift={(271.75,188.13)}, rotate = 269.56] [color={rgb, 255:red, 0; green, 0; blue, 0 }  ][line width=0.75]    (10.93,-3.29) .. controls (6.95,-1.4) and (3.31,-0.3) .. (0,0) .. controls (3.31,0.3) and (6.95,1.4) .. (10.93,3.29)   ;

\draw (290,5) node [anchor=north west][inner sep=0.75pt]   [align=left] {$\F_{\{1,2,3\},\{4\},\{5,6,7\}}(\OO)$};
\draw (293.33,45) node [anchor=north west][inner sep=0.75pt]   [align=left] {$T_1$};
\draw (396.67,45) node [anchor=north west][inner sep=0.75pt]   [align=left] {$T_3$};
\draw (440,45) node [anchor=north west][inner sep=0.75pt]   [align=left] {$\Delta=T_1+T_3$};
\draw (325,60) node [anchor=north west][inner sep=0.75pt]   [align=left] {($T_2\equiv0$)};
\draw (435,89) node [anchor=north west][inner sep=0.75pt]   [align=left] {$\F_{\PP_3}(\OO)$};
\draw (220,91) node [anchor=north west][inner sep=0.75pt]   [align=left] {$\F_{\PP_1}(\OO)$};
\draw (235.33,131) node [anchor=north west][inner sep=0.75pt]   [align=left] {$\Delta$};
\draw (447.33,131.67) node [anchor=north west][inner sep=0.75pt]   [align=left] {$\Delta$};
\draw (410,195) node [anchor=north west][inner sep=0.75pt]   [align=left] {$\M(\OO)$};
\draw (300,180) node [anchor=north west][inner sep=0.75pt]   [align=left] {$\Delta^2=T_5T_1+T_1T_3$};
\draw (250,195) node [anchor=north west][inner sep=0.75pt]   [align=left] {$\M(\OO)$};

\end{tikzpicture}

\caption{{Binary \emph{Fueter tree} for octonionic Dunkl-regular functions with root $\F_\PP(\OO)$, where $\PP=\{\{1,2,3\},\{4\},\{5,6,7\}\}$. There are two nodes $\F_{\PP_1}(\OO)$ and $\F_{\PP_1}(\OO)$ with $\PP_1=\{\{1\},\ldots,\{4\},\{5,6,7\}\}$}, $\PP_3=\{\{1,2,3\},\{4\},\ldots,\{7\}\}$ and two leaves $\M(\OO)$.
\label{figureO2}
}

   \end{minipage}
\end{figure}

\section*{Aknowledgments}




\end{document}